
\documentclass[reqno,12pt]{amsart}

\usepackage{amsmath, amssymb}
\usepackage[hypertex]{hyperref}
\usepackage{mathrsfs}
\usepackage{color}

\setlength{\topmargin}{0mm}
\setlength{\oddsidemargin}{0mm}
\setlength{\evensidemargin}{0mm}
\setlength{\textheight}{225mm}
\setlength{\textwidth}{160mm}

\numberwithin{equation}{section}

\newcommand{\1}{\mathbf {1}}
\newcommand{\0}{\mathbf {0}}
\newcommand{\Z}{{\mathbb Z}}
\newcommand{\Q}{{\mathbb Q}}
\newcommand{\R}{{\mathbb R}}
\newcommand{\C}{{\mathbb C}}

\newcommand{\h}{{\mathfrak h}}
\newcommand{\wh}{{\hat{\mathfrak h}}}
\newcommand{\CA}{{\mathcal A}}
\newcommand{\CC}{{\mathcal C}}
\newcommand{\CD}{{\mathcal D}}
\newcommand{\CK}{{\mathcal K}}

\newcommand{\bsa}{\boldsymbol{a}}
\newcommand{\bu}{\boldsymbol{u}}
\newcommand{\bv}{\boldsymbol{v}}

\newcommand{\al}{\alpha}

\newcommand{\lm}{\lambda}

\newcommand{\vep}{\varepsilon}
\newcommand{\ep}{\epsilon}
\newcommand{\la}{\langle}
\newcommand{\ra}{\rangle}

\newcommand{\Dih}{\mathrm{Dih}}
\newcommand{\Sym}{\mathrm{Sym}}

\newcommand{\hn}{\hat{\nu}}

\newcommand{\olL}{\overline{L}}
\newcommand{\ola}{\overline{\alpha}}
\newcommand{\olb}{\overline{\beta}}
\newcommand{\olu}{\overline{u}}
\newcommand{\olv}{\overline{v}}
\newcommand{\olbu}{\overline{\boldsymbol{u}}}
\newcommand{\olbv}{\overline{\boldsymbol{v}}}

\DeclareMathOperator{\Ann}{Ann}
\DeclareMathOperator{\Aut}{Aut}

\DeclareMathOperator{\Hom}{Hom}
\DeclareMathOperator{\Irr}{Irr}
\DeclareMathOperator{\Ker}{Ker}
\DeclareMathOperator{\pr}{pr}

\DeclareMathOperator{\rank}{rank}

\DeclareMathOperator{\spn}{span}

\DeclareMathOperator{\Weyl}{Weyl}
\DeclareMathOperator{\wt}{wt}

\newcommand{\SC}[1]{\Irr(#1)_{\mathrm{sc}}}

\newtheorem{theorem}{Theorem}[section]
\newtheorem{proposition}[theorem]{Proposition}
\newtheorem{lemma}[theorem]{Lemma}
\newtheorem{corollary}[theorem]{Corollary}
\newtheorem{remark}[theorem]{Remark}

\begin{document}

\title[sigma involutions]
{Sigma involutions associated with parafermion vertex operator algebra 
$K(\mathfrak{sl}_2,k)$}

\author[C.H. Lam]{Ching Hung Lam}
\address{Institute of Mathematics, Academia Sinica, Taipei 10617, Taiwan}
\email{chlam@math.sinica.edu.tw}

\author[H. Yamada]{Hiromichi Yamada}
\address{Department of Mathematics, Hitotsubashi University, Kunitachi,
Tokyo 186-8601, Japan}
\email{yamada.h@r.hit-u.ac.jp}

\keywords{vertex operator algebra, fusion algebra, automorphism}

\subjclass[2010]{Primary 17B69; Secondary 17B67, 17B68}

\begin{abstract}
An irreducible module for the parafermion vertex operator algebra 
$K(\mathfrak{sl}_2,k)$ is said to be of $\sigma$-type if an automorphism 
of the fusion algebra of $K(\mathfrak{sl}_2,k)$ of order $k$ is trivial on it. 
For any integer $k \ge 3$, we show that there exists an automorphism of order $2$ 
of the subalgebra of the fusion algebra of $K(\mathfrak{sl}_2,k)^{\la \theta \ra}$ 
spanned by the irreducible direct summands of 
$\sigma$-type irreducible $K(\mathfrak{sl}_2,k)$-modules, 
where $\theta$ is an involution of $K(\mathfrak{sl}_2,k)$. 
We discuss some examples of such an automorphism as well. 
\end{abstract}

\maketitle

\section{Introduction}\label{sec:intro}

Symmetry in the fusion algebra of a vertex operator algebra is an important 
subject in the representation theory of vertex operator algebras. 
The subalgebra of the fusion algebra on which the symmetry is trivial may 
admit another symmetry. 
Our main concern is the symmetry in such a subalgebra which appears 
if we consider the subalgebra as a subalgebra of the fusion algebra of an orbifold of 
the original vertex operator algebra. 

The fusion algebra of the parafermion vertex operator algebra 
$K(\mathfrak{sl}_2,k)$ associated with $\mathfrak{sl}_2$ and an integer $k \ge 2$
has an automorphism of order $k$.  
An irreducible $K(\mathfrak{sl}_2,k)$-module is said to be of $\sigma$-type 
if the automorphism is trivial on it. 
If $k \ge 3$, then the automorphism group of $K(\mathfrak{sl}_2,k)$ 
is of order $2$, and there is a unique involution $\theta$. 
Any $\sigma$-type irreducible $K(\mathfrak{sl}_2,k)$-module is $\theta$-stable, 
and it is a direct sum of two irreducible modules for the fixed point subalgebra 
$K(\mathfrak{sl}_2,k)^{\la \theta \ra}$.

In this paper, we show that there exists an automorphism of order $2$ 
of the subalgebra of the fusion algebra of $K(\mathfrak{sl}_2,k)^{\la \theta \ra}$ 
spanned by the irreducible direct summands of $\sigma$-type irreducible 
$K(\mathfrak{sl}_2,k)$-modules. 
We call such an automorphism a $\sigma$-involution. 
We develop a general theory of $\sigma$-involutions. 
Interesting examples of $\sigma$-involutions appear as automorphisms of 
certain lattice vertex operator algebras.  
We discuss the relationship between those $\sigma$-involutions and 
isometries of the underlying lattices. 

The parafermion vertex operator algebra $K(\mathfrak{sl}_2,k)$ 
is by definition the commutant of the Heisenberg vertex operator algebra 
generated by the Cartan subalgebra of $\mathfrak{sl}_2$ in 
the simple affine vertex operator algebra associated with the affine 
Kac-Moody algebra $\widehat{\mathfrak{sl}}_2$ at an integral level $k \ge 2$, 
see Section \ref{subsec:paraf_VOA} for basic properties of $K(\mathfrak{sl}_2,k)$. 
It is a simple, self-dual, rational, and $C_2$-cofinite vertex operator algebra 
of CFT-type with central charge $2(k-1)/(k+2)$. 
The irreducible $K(\mathfrak{sl}_2,k)$-modules are denoted by $M^{i,j}$ 
with $0 \le i \le k$ and $0 \le j < k$, where $j$ is considered to be an integer modulo $k$. 
We have $M^{i,j} \cong M^{k-i, j-i}$, 
and $M^{i,j}$, $0 \le j < i \le k$, form a complete set of representatives 
of the equivalence classes of irreducible $K(\mathfrak{sl}_2,k)$-modules. 
We write $M^0$ for $K(\mathfrak{sl}_2,k) = M^{k,0} \cong M^{0,0}$.  
Among those irreducible $M^0$-modules, 
$M^{2j,j}$, $0 \le j  \le \lfloor k/2 \rfloor$, are of $\sigma$-type, 
where $\lfloor k/2 \rfloor$ is the largest integer which does not exceed $k/2$. 
In fact, the automorphism of order $k$ of the fusion algebra of $M^0$ is trivial on 
$M^{i,j}$ if and only if $i = 2j$ for some $0 \le j  \le \lfloor k/2 \rfloor$ 
(Theorem \ref{thm:Zk_sym}). 

Assume that $k \ge 3$, 
and let $M^{0,+}$ and $M^{0,-}$ be the eigenspaces for $\theta$ in 
$M^0$ with eigenvalues $1$ and $-1$, respectively. 
Thus $M^{0,+} = K(\mathfrak{sl}_2,k)^{\la \theta \ra}$. 
The irreducible $M^{0,+}$-modules and the fusion product among them are known 
\cite{JW2017}, \cite{JW2019}. 
Using the results, 
we see that the $\sigma$-type irreducible $M^0$-module $M^{2j,j}$ 
is a direct sum of two irreducible $M^{0,+}$-modules $(M^{2j,j})^\epsilon$ for 
$\epsilon = 0, 1$,   
\[
  M^{2j,j} = (M^{2j,j})^0 \oplus (M^{2j,j})^1,
\] 
where the top level of $(M^{2j,j})^0$ agrees with 
the top level of $M^{2j,j}$. 
Moreover, it follows from the fusion product among $(M^{2j,j})^{\ep}$ for  
$0 \le j \le \lfloor k/2 \rfloor$ and $\ep \in \{ 0,1 \}$ 
(Theorem \ref{thm:fusion_M2jj01} and Remark \ref{rem:fusion_alg_generator})
that
\[
  (M^{2j,j})^{\ep} \mapsto (-1)^{j+\ep}(M^{2j,j})^{\ep}
\]
gives rise to an automorphism of order $2$ of the subalgebra of the fusion algebra of 
$M^{0,+}$ spanned by 
$(M^{2j,j})^{\ep}$ for $0 \le j \le \lfloor k/2 \rfloor$ and $\ep \in \{ 0,1 \}$ 
(Theorem \ref{thm:Z2_sym}).
Therefore, for a vertex operator algebra $V$ containing 
a vertex operator subalgebra $W \cong K(\mathfrak{sl}_2,k)$ such that 
$V$ is a direct sum of $\sigma$-type irreducible $W$-modules, 
we can define an automorphism $\sigma_W$ of the vertex operator algebra $V$ 
by multiplying the elements of the irreducible direct summands 
isomorphic to $(M^{2j,j})^{\ep}$ by $(-1)^{j+\ep}$ (Theorem \ref{thm:sigma-inv}). 
We call $\sigma_W$ the $\sigma$-involution of $V$ associated with $W$.

A lattice vertex operator algebra $V_{\sqrt{2}A_{k-1}}$ associated with 
$\sqrt{2}$ times a root lattice of type $A_{k-1}$ 
contains a vertex operator subalgebra $W \cong K(\mathfrak{sl}_2,k)$. 
It is shown that the $\sigma$-involution associated with $W$ 
coincides with the lift $\theta$ of the $-1$-isometry of $\sqrt{2}A_{k-1}$ 
as an automorphism of $V_{\sqrt{2}A_{k-1}}$ (Theorem \ref{thm:theta-sigma}). 
We also study $\sigma$-involutions of a lattice vertex operator algebra $V_L$  
for a positive definite even lattice $L$ containing a sublattice $N \cong \sqrt{2}A_{k-1}$. 
We show that if $N$ is RSSD in $L$, that is, $2L \subset N + \Ann_L(N)$, 
then $V_L$ is a direct sum of $\sigma$-type irreducible $W$-modules 
((1) of Theorem \ref{thm:N_RSSD_L}). 
Assume further that $L$ has no element of square norm $2$. 
Then the $\sigma$-involution $\sigma_W$ corresponds to 
the RSSD involution $t_N$ of $L$ associated with $N$ 
((2) of Theorem \ref{thm:N_RSSD_L}). 
As to the notion of RSSD involutions, see Section \ref{subsec:RSSD}. 

Let $\nu$ be a fixed point free isometry of $\sqrt{2}A_{k-1}$ of order $k$ 
which corresponds to a Coxeter element of the Weyl group of 
the root system of type $A_{k-1}$. 
Let $\hn \in \Aut (V_{\sqrt{2}A_{k-1}})$ be a lift of $\nu$. 
We determine the centralizer of $\hn$ and the normalizer of $\la \hn \ra$ in 
$\Aut(V_{\sqrt{2}A_{k-1}})$ (Theorem \ref{thm:aut_VNnu}), 
and discuss their structure from a point of view of $\sigma$-involutions 
(Proposition \ref{prop:other_sigma}). 
Furthermore, we show that the automorphism group of the fixed point subalgebra 
$V_{\sqrt{2}A_{k-1}}^{\la \hat{\nu} \ra}$ is isomorphic to the quotient group of 
the normalizer of $\la \hn \ra$ in $\Aut(V_{\sqrt{2}A_{k-1}})$ by $\la \hn \ra$ 
provided that $k$ is an odd prime 
(Theorem \ref{thm:Aut_VNnu}).

The notion of RSSD involutions was introduced in \cite{Griess2011} 
as a generalization of reflections. 
We present an example of lattice vertex operator algebras in which  
the RSSD involutions of the lattice corresponding to $\sigma$-involutions 
are in fact reflections associated with roots, and they generate the Weyl group 
of the root system (Theorem \ref{thm:sec-7-1}). 

In the case $k = 3$, $\sigma$-involutions were previously 
introduced in \cite{Miyamoto2001}, and their properties 
together with interesting examples were studied in \cite{LY2014}.  
In this paper, we consider $\sigma$-involutions 
associated with $K(\mathfrak{sl}_2,k)$ for an arbitrary integer $k \ge 3$. 
We extend and refine the arguments in \cite{LY2014} 
by using a slightly different method to develop  
the theory of $\sigma$-involutions for a general $k$. 
The calculation concerning Virasoro vectors of central charge $4/5$ 
in Griess algebras plays a role in \cite{LY2014}, 
while our method is based on the representation theory 
of the orbifold $K(\mathfrak{sl}_2,k)^{\la \theta \ra}$ \cite{JW2017, JW2019}, 
and a realization of the parafermion vertex operater algebra $K(\mathfrak{sl}_2,k)$ 
in the lattice vertex operator algebra $V_{\sqrt{2}A_{k-1}}$.

$\sigma$-involutions not related to $V_{\sqrt{2}A_{k-1}}$ are of special interest 
as they would give extra symmetries. 
In the case $k = 3$, such a $\sigma$-involution of $V_{K_{12}}^{\la \hat{\nu} \ra}$ 
was studied \cite[Section 5.5]{LY2014}, where $K_{12}$ is the Coxeter-Todd lattice. 
We show that such a $\sigma$-involution also exists in the case $k = 5$, 
see Section \ref{subsec:nonstandard}. 
In fact, the $\sigma$-involution is related to the parafermion vertex operator algebra 
$K(\mathfrak{sl}_2,5)$ contained in $U_{5A}$, 
where $U_{5A}$ denotes the vertex operator algebra $U$ constructed 
in \cite{LYY2005, LYY2007} for the $5A$ case.

This paper is organized as follows. 
Section \ref{sec:preliminaries} is devoted to preliminaries.  
We recall the notion of RSSD involutions. 
Moreover, we review a central extension of 
a positive definite even lattice by a group of order $2$ and
the automorphism group of a lattice vertex operator algebra. 
We collect basic properties of 
the parafermion vertex operator algebra $K(\mathfrak{sl}_2,k)$ as well. 
In Section \ref{sec:sigma-involution}, 
we recall a $\Z_k$ symmetry in the fusion algebra of $K(\mathfrak{sl}_2,k)$, 
and introduce a $\sigma$-involution associated with $K(\mathfrak{sl}_2,k)$. 
In Section \ref{sec:s_VN}, we discuss some $\sigma$-involutions 
of the lattice vertex operator algebra $V_{\sqrt{2}A_{k-1}}$, 
and show how a $\sigma$-involution is related to an RSSD involution. 
In Section \ref{sec:cent_in_aut_VNnu}, we study the centralizer of a lift $\hn$ of 
a fixed point free isometry $\nu$ of the lattice $A_{k-1}$ of order $k$ in 
$\Aut(V_{\sqrt{2}A_{k-1}})$.  
In Section \ref{sec:aut_VNnu}, we determine the automorphism group of 
the fixed point subalgebra of the vertex operator algebra $V_{\sqrt{2}A_{k-1}}$ 
by $\hn$ in the case where $k$ is an odd prime. 
In Section \ref{sec:examples}, we provide some examples of 
$\sigma$-involutions of certain lattice vertex operator algebras. 
We review the irreducible modules and fusion rules for $U_{5A}$ in Appendix. 

\medskip\noindent
\textbf{Acknowledgments.} 
Part of the work was done while the second author was staying at 
Institute of Mathematics, Academia Sinica, Taiwan in February and March, 2020. 
He is grateful to the institute. 
The first author was supported by 
grant AS-IA-107-M02 of Academia Sinica and 
MoST grant 107-2115-M-001-003-MY3 of Taiwan.

\section{Preliminaries}\label{sec:preliminaries}

Let $(L, \la \,\cdot\,,\,\cdot\,\ra)$ be  
a positive definite integral lattice,  
that is, $L$ is a free $\Z$-module of finite rank equipped with 
a positive definite symmetric $\Z$-bilinear form 
$\la \,\cdot\,,\,\cdot\,\ra : L \times L \to \Z$. 
We set $L(n) = \{ \alpha \in L \mid \la \alpha,\alpha \ra = n \}$ 
for a positive integer $n$. An element $\alpha \in L(2)$ is called a root. 
If $\la \alpha, \alpha \ra \in 2\Z$ for any $\alpha \in L$, 
then $L$ is said to be even. 
The isometry group $O(L)$ of $L$ 
is the group of automorphisms of the free $\Z$-module $L$ 
preserving $\la \,\cdot\,,\,\cdot\,\ra$, that is, 
\begin{equation*}
  O(L) = \{g \in \Aut(L) \mid \la g\alpha, g\beta \ra = \la \alpha, \beta \ra 
  \text{ for } \alpha, \beta \in L\}.
\end{equation*}

Let $\Q L$ be the $\Q$-vector space spanned by $L$, 
which may be identified with $\Q \otimes_\Z L$. 
We extend $\la \,\cdot\,,\,\cdot\,\ra$ 
to $\Q L \times \Q L \to \Q$ by $\Q$-linearly. 
The dual lattice $\{ \alpha \in \Q L \mid \la \alpha, L \ra \subset \Z\}$ 
of $L$ is denoted by $L^*$.

\subsection{RSSD involutions}\label{subsec:RSSD}

We recall the notion of RSSD involutions, which was introduced in 
\cite[Section 2.6]{Griess2011} as a generalization of reflections. 
Let $L$ be a positive definite integral lattice. 
For a sublattice $A$ of $L$, set
\[
  \Ann_L(A) = \{ \alpha \in L \mid \la \al, A \ra = 0 \}.
\]
Then $\rank A + \rank( \Ann_L(A)) = \rank L$ and 
\begin{equation}\label{eq:AB_dec}
   A \oplus \Ann_L(A) \subset L \subset L^* \subset A^* \oplus \Ann_L(A)^* \subset \Q L
\end{equation}
with $\Q L = \Q A \oplus \Q \Ann_L(A)$. 
In fact, $\Q \Ann_L(A) = (\Q A)^\perp$ is the orthogonal complement of 
$\Q A$ in $\Q L$ with respect to $\la \,\cdot\,,\,\cdot\,\ra$. 
Define an orthogonal transformation $t_A$ on $\Q L$ by 
\begin{equation*}
  t_A = 
  \begin{cases}
  -1 & \text{on} \ \Q A,\\
  1 & \text{on} \ \Q \Ann_L(A).
  \end{cases}
\end{equation*}

A sublattice $A$ of $L$ is said to be relatively semiselfdual (RSSD) in $L$ 
if $2L \subset A + \Ann_L(A)$ \cite[Definition 2.6.7]{Griess2011},  
see also \cite[Definition 2.5]{GL2011a}. 
We cite Proposition 2.6.8 of \cite{Griess2011} as follows. 

\begin{proposition}\label{prop:tA}
If $A$ is RSSD in $L$, then $t_A(L) = L$. 
\end{proposition}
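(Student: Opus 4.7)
The plan is to prove the inclusion $t_A(L)\subset L$; since $t_A$ is an involution, this automatically gives $t_A(L)=L$. So the whole proposition reduces to showing that $t_A$ sends an arbitrary $\alpha\in L$ back into $L$.

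The key observation is the orthogonal decomposition $\Q L=\Q A\oplus \Q\Ann_L(A)$ noted in \eqref{eq:AB_dec}, with respect to which $t_A$ acts as $-1$ on the first summand and $+1$ on the second. Given $\alpha\in L$, the RSSD hypothesis $2L\subset A+\Ann_L(A)$ lets me write
\[
  2\alpha = a + b, \qquad a\in A,\ b\in \Ann_L(A).
\]
Because this decomposition lives inside the orthogonal sum $\Q A\oplus\Q\Ann_L(A)$, applying $t_A$ yields $t_A(2\alpha)=-a+b$, hence
\[
  t_A(\alpha) = \frac{-a+b}{2} = \frac{a+b}{2} - a = \alpha - a.
\]
Since $\alpha\in L$ and $a\in A\subset L$, this gives $t_A(\alpha)\in L$, which is what I needed.

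There is essentially no obstacle here: the entire argument is the elementary manipulation above, and the only thing to be careful about is that the decomposition $2\alpha=a+b$ is honestly orthogonal, so that $t_A$ may be applied termwise. In writing it up I would simply remark at the end that $t_A^2=\id$ on $\Q L$, so $t_A(L)\subset L$ combined with $L=t_A^2(L)\subset t_A(L)$ gives the claimed equality.
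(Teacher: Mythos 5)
Your argument is correct and is essentially the paper's own proof: both rest on writing $2\alpha$ (equivalently $\alpha$) according to the orthogonal splitting $\Q A\oplus\Q\Ann_L(A)$, observing that the RSSD condition forces the $\Q A$-component of $2\alpha$ to lie in $A$, and concluding $t_A(\alpha)=\alpha-a\in L$. The extra remark that $t_A^2=\id$ upgrades the inclusion to equality is a harmless (and correct) finishing touch.
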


Indeed, each $\alpha \in L$ is uniquely written as $\alpha = a + b$ 
with $a \in A^*$ and $b \in \Ann_L(A)^*$, and $t_A(\alpha) = - a + b$. 
If $2\alpha = 2a + 2b \in A + \Ann_L(A)$, then $2a \in A$, so $t_A(\alpha) \in L$. 

For an RSSD sublattice $A$ of $L$, the isometry $t_A \in O(L)$ 
of the lattice $L$ is called an RSSD involution 
\cite[Definition 2.6.7]{Griess2011}. 
Let 
\begin{equation*}
  \pr_{A^*} : A^* \oplus \Ann_L(A)^* \to A^*
\end{equation*}
be the projection. 
Then $\pr_{A^*}(\alpha) = a$ for $\alpha = a + b \in L$ with $a \in A^*$ and $b \in \Ann_L(A)^*$.  
Hence the next lemma holds.

\begin{lemma}\label{lem:prA}
$2  \pr_{A^*}(L) \subset A$ if and only if $A$ is RSSD in $L$.
\end{lemma}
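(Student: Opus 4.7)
The plan is to reduce both directions to a direct comparison of the two ways of decomposing $2\alpha$ for $\alpha\in L$, using the uniqueness of the rational decomposition $\Q L = \Q A \oplus \Q\Ann_L(A)$ established just above in \eqref{eq:AB_dec}. For each $\alpha\in L$, I write $\alpha = a+b$ with $a\in A^{*}$, $b\in \Ann_L(A)^{*}$, so that $\pr_{A^{*}}(\alpha)=a$ and $2\alpha = 2a+2b$ is the unique decomposition of $2\alpha$ inside $A^{*}\oplus\Ann_L(A)^{*}$.

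For the forward implication, assume $A$ is RSSD in $L$, so that $2\alpha = a' + b'$ for some $a'\in A$ and $b'\in\Ann_L(A)$. Since $A\subset A^{*}$ and $\Ann_L(A)\subset \Ann_L(A)^{*}$, uniqueness of the decomposition forces $a'=2a$ and $b'=2b$; in particular $2\pr_{A^{*}}(\alpha)=2a\in A$.

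For the converse, assume $2\pr_{A^{*}}(L)\subset A$. Given $\alpha=a+b\in L$, the hypothesis gives $2a\in A\subset L$, so $2b = 2\alpha-2a\in L$. To conclude $2b\in\Ann_L(A)$ (and hence $2\alpha\in A+\Ann_L(A)$), I note the short auxiliary fact that $L\cap \Q\Ann_L(A)=\Ann_L(A)$: any element of $L$ lying in the $\Q$-span of $\Ann_L(A)$ is automatically orthogonal to $A$, hence lies in $\Ann_L(A)$. Applying this to $2b\in L\cap \Q\Ann_L(A)$ finishes the argument.

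There is no real obstacle here; the only point that might be glossed over is the identification $L\cap \Q\Ann_L(A)=\Ann_L(A)$, which is what lets one pass from the component $2b\in\Ann_L(A)^{*}$ of the rational decomposition back into $\Ann_L(A)$ once we know $2b\in L$. Everything else is bookkeeping with the unique splitting in \eqref{eq:AB_dec}.
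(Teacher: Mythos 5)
Your proof is correct and follows the same route the paper has in mind: the paper treats the lemma as immediate from the unique decomposition $\alpha=a+b$ in $A^{*}\oplus\Ann_L(A)^{*}$ (it only writes out the forward direction, in the justification of Proposition \ref{prop:tA}). Your explicit verification of the converse, including the observation that $L\cap\Q\Ann_L(A)=\Ann_L(A)$, correctly fills in the step the paper leaves implicit.
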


\begin{remark}\label{rmk:VL_VAVB}
Let $A$ be a sublattice of a positive definite even lattice $L$,  
and set $B = \Ann_L(A)$. 
Then the vertex operator algebra $V_L$ associated with $L$ 
is a simple current extension of $V_{A \oplus B} = V_A \otimes V_B$.
\end{remark}

\subsection{$\hat{L}$ and its automorphisms}\label{subsec:aut_hatL}

We review a central extension $\hat{L}$ of a lattice $L$ 
by a group of order $2$, see for example \cite[Section 3.8]{MP1995}. 
Let $(L, \la \,\cdot\,,\,\cdot\,\ra)$ be a positive definite even lattice.
Set $\olL = L/2L$, 
which is a vector space over $\Z_2$ with dimension $\rank L$. 
A map $q : \olL \to \Z_2$ is called a quadratic form if the associated map 
$b : \olL \times \olL \to \Z_2$ defined by
\begin{equation}\label{eq:qb}
  b(x,y) = q(x+y) + q(x) + q(y)
\end{equation}
is bilinear. 
This condition implies that $b$ is alternating. 
We have
\begin{equation}\label{eq:q_value}
  q  (a_1 v_1 + \cdots + a_m v_m) = \sum_{i=1}^m a_i q(v_i) 
  +  \sum_{1 \le i < j \le m} a_i a_j b(v_i,v_j)
\end{equation}
for $a_i \in \Z_2$ and $v_i \in \olL$ by \eqref{eq:qb}. 

If $\mu : \olL \to \Z_2$ is a linear map, then 
$q + \mu$ is also a quadratic form with the same associated bilinear form. 
For an alternating bilinear form $b$ on $\olL$, 
there is a quadratic form on $\olL$ whose associated bilinear form is $b$. 
Such a quadratic form is uniquely determined by $b$ 
up to $\Hom(\olL, \Z_2)$. 
On the other hand, for a quadratic form $q$ on $\olL$,  
there is a bilinear form $\vep$ on $\olL$ satisfying 
$q(x) = \vep(x,x)$ for $x \in \olL$. 
Such a bilinear form $\vep$ is uniquely determined by $q$ 
up to an alternating bilinear form on $\olL$. 
For $\alpha$, $\beta \in L$, 
let $\ola = \alpha + 2L$ and $\olb = \beta + 2L \in \olL$. 
If $b : L \times L \to \Z_2$ is a $\Z$-bilinear map, then the radical of $b$ 
contains $2L$. So $b$  induces a bilinear form $\olL \times \olL \to \Z_2$; 
$(\ola, \olb) \mapsto b(\alpha,\beta)$. 
For simplicity of notation, we also write $b$ for the induced bilinear form. 
Then $b(\ola,\olb) = b(\alpha,\beta)$. 
Similarly, we sometimes do not distinguish $\Hom(L,\Z_2)$ and $\Hom(\olL,\Z_2)$.

Consider a map $q : \olL \to \Z_2$ defined by 
\[
  q(\ola) = \frac{1}{2} \la \alpha,\alpha \ra +2\Z \quad \text{for}  \ \alpha \in L. 
\]
The map $q$ is a quadratic form whose associated bilinear form $b$ is
given by $b(\ola,\olb) = \la \alpha,\beta \ra +2\Z$. 
Hence there is a bilinear form $\vep : \olL \times \olL \to \Z_2$ satisfying
\begin{equation}\label{eq:cond_vep}
  \vep(\ola,\ola) = \frac{1}{2} \la \alpha,\alpha \ra +2\Z  
  \quad \text{for} \ \alpha \in L. 
\end{equation}
Such a bilinear form $\vep$ is uniquely determined 
up to an alternating bilinear form on $\olL$. 
The condition \eqref{eq:cond_vep} implies that
\begin{equation}\label{eq:cond2_vep}
    \vep(\ola,\olb) + \vep(\olb,\ola) =\la \alpha,\beta \ra +2\Z
    \quad \text{for} \ \alpha, \beta \in L.
\end{equation}

We fix a bilinear form $\vep : \olL \times \olL \to \Z_2$ satisfying 
\eqref{eq:cond_vep}. 
Let 
\begin{equation}\label{eq:ext_L}
  1 \longrightarrow \la \kappa \ra \longrightarrow \hat{L} 
  \longrightarrow L \longrightarrow 0
\end{equation}
be a central extension of the additive group $L$ 
by a group $\la \kappa \ra$ of order $2$ with $2$-cocyle $\vep$. 
Each element of $\hat{L}$ is uniquely written as $e^\alpha \kappa^a$ 
for $\alpha \in L$ and $a \in \Z_2$, and the product of 
$e^\alpha \kappa^a$ and $e^\beta \kappa^b$ is 
\begin{equation}\label{eq:def-prod}
  e^\alpha \kappa^a e^\beta \kappa^b 
  = e^{\alpha+\beta} \kappa^{\vep(\ola,\olb) + a + b}. 
\end{equation}

Let $\vep' : \olL \times \olL \to \Z_2$ be another bilinear form satisfying 
\eqref{eq:cond_vep}. 
Then there is a quadratic form $\xi$ on $\olL$ such that 
\[
  \xi(\ola + \olb) + \xi(\ola) + \xi(\olb) = \vep(\ola,\olb) + \vep'(\ola,\olb)
  \quad \text{for} \ \alpha, \beta \in L. 
\]
Set $e'^\alpha = e^\alpha \kappa^{\xi(\ola)}$. 
Then the multiplication in $\hat{L}$ is written as
\begin{equation*}
  e'^\alpha \kappa^a e'^\beta \kappa^b 
  = e'^{\alpha + \beta} \kappa^{\vep'(\ola,\olb) + a + b}.
\end{equation*}

For $g \in O(L)$, define a bilinear form $\vep^g : \olL \times \olL \to \Z_2$ 
by $\vep^g(\ola,\olb) = \vep(\overline{g\alpha}, \overline{g\beta})$. 
Then $\vep^g$ also satisfies \eqref{eq:cond_vep}. 
Let 
\[
  b_g =  \vep + \vep^g : \olL \times \olL \to \Z_2; \quad  
  b_g(\ola,\olb) 
  = \vep(\ola,\olb) + \vep(\overline{g\alpha}, \overline{g\beta}).
\]
Then $b_g$ is an alternating bilinear form. 
Hence there is a quadratic form $\eta$ on $\olL$ with associated bilinear 
form $b_g$.
\begin{equation}\label{eq:def-eta}
  \eta(\ola + \olb) + \eta(\ola) + \eta(\olb) = b_g(\ola,\olb) 
  \quad \text{for} \ \alpha, \beta \in L.
\end{equation}

Define a map $\hat{g} : \hat{L} \to \hat{L}$ by 
\begin{equation}\label{def-hatg}
  \hat{g}(e^\alpha \kappa^a) = e^{g \alpha}\kappa^{\eta(\ola)+a}.
\end{equation}
Then $\hat{g}$ is an automorphism of the group $\hat{L}$. 
The automorphism $\hat{g}$ depends on the choice of $\eta$ 
satisfying \eqref{eq:def-eta}. 
The inverse of $\hat{g}$ is given by  
\[
  \hat{g}^{-1}(e^\alpha \kappa^a) 
  = e^{g^{-1}\alpha} \kappa^{\eta(\overline{g^{-1}\alpha}) + a}.
\]

For $f \in O(L)$, let $\xi$ be a quadratic form on $\olL$ 
with associated bilinear form $b_f = \vep + \vep^f$, 
and define a map $\hat{f} : \olL \to \olL$ by 
$\hat{f}(e^\alpha \kappa^a) = e^{f \alpha}\kappa^{\xi(\ola)+a}$. 
Then
\begin{equation*}
  \hat{f}\hat{g}(e^\alpha \kappa^a) 
  = e^{fg\alpha} \kappa^{\eta(\ola) + \xi(\overline{g \alpha}) + a}.
\end{equation*}
Note that the map $\eta + \xi^g : \olL \to \Z_2$; 
$\ola \mapsto \eta(\ola) + \xi(\overline{g \alpha})$ is a quadratic form 
with associated bilinear form $b_{fg} = \vep + \vep^{fg}$.

Let $\eta : \olL \to \Z_2$ be a map. 
Then a map $\phi : \hat{L} \to \hat{L}$ defined by 
$\phi(e^\alpha \kappa^a) = e^{g \alpha}\kappa^{\eta(\ola)+a}$ 
is an automorphism of the group $\hat{L}$ 
if and only if $\eta$ is a quadratic form 
with associated bilinear form $b_g$. 
In such a case, $\phi$ is called a lift of $g$. 

Denote by $O(\hat{L})$ the group of automorphisms $\phi$ of $\hat{L}$ 
such that $\phi$ is a lift of some $g \in O(L)$. 
Let $\varphi : O(\hat{L}) \to O(L)$ be a group homomorphism 
defined by $\varphi(\phi) = g$. 
Then we have a exact sequence
\begin{equation}\label{eq:ext_OhatL}
  1 \longrightarrow \Hom(L,\Z_2) \longrightarrow O(\hat{L}) 
  \stackrel{\varphi}{\longrightarrow} O(L) \longrightarrow 1
\end{equation}
of groups, where 
$\Hom(L,\Z_2) \to O(\hat{L}); \lambda \mapsto \hat{\lambda}$ 
is defined by 
$\hat{\lambda}(e^\alpha \kappa^a) = e^\alpha \kappa^{\lambda(\ola) + a}$, 
see \cite[Section 2.4]{DN1999} and \cite[Section 3.8, Proposition 6]{MP1995}, 
see also \cite[Proposition 5.4.1]{FLM1988}.

Let $\alpha \in L$, and choose a positive integer $n$ such that 
$g^n \alpha = \alpha$. 
Then
\begin{equation}\label{eq:eta-bg}
  \eta(\ola + \overline{g\alpha} + \cdots + \overline{g^{n-1}\alpha}) 
  = \sum_{i=0}^{n-1} \eta(\overline{g^i\alpha}) 
  + \sum_{0 \le i < j \le n-1} b_g(\overline{g^i\alpha}, \overline{g^j\alpha})
\end{equation}
by \eqref{eq:q_value}. 
Since $b_g = \vep + \vep^g$, we have 
\begin{equation}\label{eq:sum_bg}
  \sum_{0 \le i < j \le n-1} b_g(\overline{g^i\alpha}, \overline{g^j\alpha}) 
  = \sum_{1 \le j \le n-1} \la \alpha, g^j \alpha \ra + 2\Z
\end{equation}
by \eqref{eq:cond2_vep}. 
Moreover, 
$\la \alpha, g^j \alpha \ra = \la \alpha, g^{n-j} \alpha \ra$ as $g^n \alpha = \alpha$.  
Therefore, 
\begin{equation*}
  \eta(\ola + \overline{g\alpha} + \cdots + \overline{g^{n-1}\alpha}) = 
  \begin{cases}
  \sum_{i=0}^{n-1} \eta(\overline{g^i\alpha}) 
  & \text{ if } n \text{ is odd},\\
  \sum_{i=0}^{n-1} \eta(\overline{g^i\alpha}) + \la \alpha, g^{n/2} \alpha \ra + 2\Z
  & \text{ if } n \text{ is even}.
  \end{cases}
\end{equation*}
Note that $\hat{g}^n (e^\alpha) = e^{g^n \alpha} \kappa^\delta$ with 
$\delta = \sum_{i=0}^{n-1} \eta(\overline{g^i\alpha})$, 
see \cite[Lemma 12.1]{Borcherds1992}, \cite[Propositions 7.1 and 7.2]{vEMS2020}.

When $g$ is fixed point free on $L$ of order $n$,  
we have $(1 + g + \cdots + g^{n-1}) \alpha = 0$ and then 
$\sum_{1 \le j \le n-1} \la \alpha, g^j \alpha \ra = - \la \alpha, \alpha \ra \in 2\Z$. 
Therefore, $\sum_{i=0}^{n-1} \eta(\overline{g^i\alpha}) = 0$ 
by \eqref{eq:eta-bg} and \eqref{eq:sum_bg}. 
Thus the following proposition holds. 

\begin{proposition}\label{prop:order-hatg}
Suppose $g \in O(L)$ is fixed point free on $L$ of order an integer $n \ge 2$. 
Then the order of $\hat{g} \in O(\hat{L})$ is also $n$. 
\end{proposition}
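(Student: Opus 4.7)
The plan is to show that $\hat g^n$ acts as the identity on every generator $e^\alpha$ of $\hat L$ (it already fixes $\kappa$), and then to observe that the order of $\hat g$ cannot be strictly smaller than $n$ because $\hat g$ projects to $g$ under the map $\varphi$ in \eqref{eq:ext_OhatL}. So the real content is the computation of $\hat g^n(e^\alpha)$.

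First I would use the formula recalled immediately before the statement: for any $\alpha \in L$ with $g^n\alpha = \alpha$,
\[
  \hat g^n(e^\alpha) \;=\; e^{g^n\alpha}\kappa^{\delta(\alpha)}, \qquad
  \delta(\alpha) \;=\; \sum_{i=0}^{n-1}\eta(\overline{g^i\alpha}).
\]
Since $g^n = \id$, the condition $g^n\alpha = \alpha$ holds for every $\alpha \in L$, so it suffices to prove $\delta(\alpha)=0$ in $\Z_2$ for all $\alpha$.

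Next I would extract the algebraic input from the fixed-point-free hypothesis. Put $\beta = (1 + g + \cdots + g^{n-1})\alpha$; then $g\beta = \beta$, and because $g$ has no non-zero fixed vector on $L$, $\beta = 0$. Consequently $\ola + \overline{g\alpha} + \cdots + \overline{g^{n-1}\alpha} = \bar 0$ in $\olL$, so by \eqref{eq:eta-bg}
\[
  0 \;=\; \eta(\bar 0) \;=\; \sum_{i=0}^{n-1} \eta(\overline{g^i\alpha})
  \;+\; \sum_{0 \le i < j \le n-1} b_g(\overline{g^i\alpha},\overline{g^j\alpha}).
\]
Applying \eqref{eq:sum_bg}, the double sum equals $\sum_{j=1}^{n-1}\la\alpha,g^j\alpha\ra \bmod 2$. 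But pairing $\alpha$ with $\beta = 0$ gives $\la\alpha,\alpha\ra + \sum_{j=1}^{n-1}\la\alpha,g^j\alpha\ra = 0$, so this sum equals $-\la\alpha,\alpha\ra$, which lies in $2\Z$ because $L$ is even. Therefore $\delta(\alpha) = 0$, and $\hat g^n(e^\alpha) = e^\alpha$.

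Finally, I would combine this with $\hat g^n(\kappa) = \kappa$ (the centre is always fixed) to conclude $\hat g^n = \id_{\hat L}$, so the order of $\hat g$ divides $n$. Conversely, $\varphi(\hat g) = g$ has order exactly $n$ in $O(L)$, so the order of $\hat g$ is a multiple of $n$. Hence the order is exactly $n$. The only mildly non-trivial step is the passage from fixed-point-freeness to $(1 + g + \cdots + g^{n-1})\alpha = 0$, but this is immediate from the fact that $\beta := \sum_{i} g^i\alpha$ is $g$-fixed; everything else is a direct substitution into the formulas developed in Section~\ref{subsec:aut_hatL}.
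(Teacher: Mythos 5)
Your proof is correct and follows essentially the same route as the paper: it computes $\hat g^n(e^\alpha)=e^\alpha\kappa^{\delta(\alpha)}$, uses fixed-point-freeness to get $(1+g+\cdots+g^{n-1})\alpha=0$, and combines \eqref{eq:eta-bg}, \eqref{eq:sum_bg}, and evenness of $L$ to conclude $\delta(\alpha)=0$, with the lower bound on the order coming from the projection $\varphi(\hat g)=g$. No gaps.
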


The next simple lemma will be used later.

\begin{lemma}\label{lem:div_g_Hom}
Let $g : L \to L$ be an automorphism of a finite rank free $\Z$-module $L$ 
of order an odd integer $p \ge 3$. 
Assume that $g$ is fixed point free on $L$. 
Then the map 
\[
  \Hom(\olL,\Z_2) \to \Hom(\olL,\Z_2); \quad \mu \mapsto \mu + \mu^g
\] 
is a $\Z_2$-linear isomorphism, 
where $\mu^g$ is defined by $\mu^g(\ola) = \mu(\overline{g\alpha})$ 
for $\ola = \alpha + 2L \in \olL$.
\end{lemma}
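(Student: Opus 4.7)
The plan is to recognize the map $\mu \mapsto \mu + \mu^g$ as the transpose of the $\Z_2$-linear endomorphism $1 + \bar{g}$ of the finite-dimensional $\Z_2$-vector space $\olL = L/2L$, where $\bar{g}$ denotes the automorphism of $\olL$ induced by $g$. Indeed, $\mu^g = \mu \circ \bar{g}$, so $\mu + \mu^g = \mu \circ (1 + \bar{g})$, and bijectivity of this pullback is equivalent to bijectivity of $1 + \bar{g}$ on $\olL$. Since we are working over $\Z_2$ we have $1 + \bar{g} = \bar{g} - 1$, and as $\olL$ is finite-dimensional, it is enough to prove that $\bar{g} - 1$ is injective on $\olL$, i.e.\ that $\bar{g}$ has no nonzero fixed vector.

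The decisive step is to pass from the fixed-point-freeness of $g$ on $L$ to an integral operator identity. Because $g$ has order $p$ and no nonzero fixed vector on $\Q \otimes_\Z L$, its minimal polynomial divides $x^p - 1$ but is coprime to $x - 1$; hence it divides
\[
  \frac{x^p - 1}{x - 1} = 1 + x + x^2 + \cdots + x^{p-1}.
\]
Consequently $1 + g + g^2 + \cdots + g^{p-1} = 0$ as an endomorphism of $L$. This cyclotomic identity is the one real input of the argument and, conceptually, the main obstacle: it is what bridges the characteristic zero hypothesis (fixed-point-freeness on $L$) with the characteristic two conclusion (fixed-point-freeness on $\olL$), which a priori could fail after reduction mod $2$.

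With this identity at hand the proof concludes quickly. Suppose $\alpha \in L$ satisfies $g\alpha \equiv \alpha \pmod{2L}$. Then $g^i \alpha \equiv \alpha \pmod{2L}$ for every $i \ge 0$, and summing over $i = 0, 1, \ldots, p - 1$ gives $0 \equiv p\alpha \pmod{2L}$. Since $p$ is odd, this forces $\alpha \in 2L$, that is $\ola = 0$. Therefore $\bar{g} - 1$ is injective, hence bijective, on $\olL$; transposing yields the asserted isomorphism on $\Hom(\olL, \Z_2)$.
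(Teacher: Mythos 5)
Your proof is correct and uses essentially the same ideas as the paper's: the identity $1+g+\cdots+g^{p-1}=0$ on $L$ (forced by fixed-point-freeness and $g^p=1$) together with the oddness of $p$, plus finite-dimensionality to pass from injectivity to bijectivity. The only cosmetic difference is that you prove injectivity of $1+\bar{g}$ on $\olL$ and then transpose, whereas the paper argues directly on $\Hom(\olL,\Z_2)$ by showing that $\mu=\mu^g$ forces $\mu(\ola)=p\,\mu(\ola)=\mu\bigl(\overline{(1+g+\cdots+g^{p-1})\alpha}\bigr)=0$; the two computations are transposes of one another.
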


\begin{proof}
Suppose $\mu + \mu^g = 0$. 
Then $\mu(\overline{(1+g+\cdots+g^{p-1})\alpha}) = p \mu(\ola)$.  
Since $g$ is fixed point free of order $p$, we have 
$(1+g+\cdots+g^{p-1})\alpha = 0$. 
Thus $\mu(\ola) = 0$ as $p$ is an odd integer. 
Hence the kernel of the linear map $\mu \mapsto \mu + \mu^g$ is trivial,  
so the assertion holds.
\end{proof}

We consider the relationship between the normalizer $N_{O(L)}(\la g \ra)$ 
of $\la g \ra$ (resp. the centralizer $C_{O(L)}(g)$ of $g$) in $O(L)$ 
and the normalizer $N_{O(\hat{L})}(\la \hat{g} \ra)$ of $\la \hat{g} \ra$ 
(resp. the centralizer $C_{O(\hat{L})}(\hat{g})$ of $\hat{g}$) in $O(\hat{L})$.

\begin{proposition}\label{prop:OL_OhatL}
Assume that $g \in O(L)$ is fixed point free on $L$ 
of order an odd integer $p \ge 3$. 
Let $\hat{g} \in O(\hat{L})$ be a lift of $g$. 

\textup{(1)} 
For each $f \in N_{O(L)}(\la g \ra)$, there is a unique lift $\hat{f}$ of $f$ 
in $N_{O(\hat{L})}(\la \hat{g} \ra)$, 
and the restriction of $\varphi$ in  
\eqref{eq:ext_OhatL} to $N_{O(\hat{L})}(\la \hat{g} \ra)$ 
is an isomorphism from $N_{O(\hat{L})}(\la \hat{g} \ra)$ 
to $N_{O(L)}(\la g \ra)$. 

\textup{(2)}
For each $f \in C_{O(L)}(g)$, there is a unique lift $\hat{f}$ of $f$ 
in $C_{O(\hat{L})}(\hat{g})$,  
and the restriction of $\varphi$ in  
\eqref{eq:ext_OhatL} to $C_{O(\hat{L})}(\hat{g})$ 
is an isomorphism from $C_{O(\hat{L})}(\hat{g})$ 
to $C_{O(L)}(g)$. 
\end{proposition}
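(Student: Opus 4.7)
The plan is to exploit the exact sequence \eqref{eq:ext_OhatL}: every $f \in O(L)$ admits a lift in $O(\hat{L})$, and any two lifts differ by the action of an element $\hat{\mu}$ with $\mu \in \Hom(L,\Z_2)$. For each part I will start from an arbitrary preliminary lift $\hat{f}_0$ of $f$, and look for the unique correction $\mu$ so that $\hat{\mu}\hat{f}_0$ lands in the desired subgroup, thereby reducing the statement to an existence-and-uniqueness problem for a linear equation in $\Hom(L,\Z_2)$.

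For part (2), the key preliminary step is the conjugation formula $\hat{g}\hat{\mu}\hat{g}^{-1} = \widehat{\mu^{g^{-1}}}$, where $\mu^{g^{-1}}(\ola) = \mu(\overline{g^{-1}\alpha})$; this follows by a direct calculation from \eqref{def-hatg} and the definition of $\hat{\mu}$. Since $fg = gf$, the element $\hat{f}_0 \hat{g}\hat{f}_0^{-1}\hat{g}^{-1}$ projects to the identity of $O(L)$ and hence equals $\hat{\lambda}$ for some $\lambda \in \Hom(L,\Z_2)$. Replacing $\hat{f}_0$ by $\hat{\mu}\hat{f}_0$ and using the conjugation formula together with the fact that $\Hom(L,\Z_2) \subset O(\hat{L})$ is $2$-torsion and abelian, the new commutator becomes $\widehat{\lambda + \mu + \mu^{g^{-1}}}$. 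Because $g^{-1}$ is again fixed point free of odd order $p$, Lemma \ref{lem:div_g_Hom} applied to $g^{-1}$ yields a unique $\mu$ solving $\mu + \mu^{g^{-1}} = \lambda$, providing the required lift.

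For part (1), $fgf^{-1} = g^i$ for some integer $i$, and since $g^i = fgf^{-1}$ has order $p$ we must have $\gcd(i,p) = 1$; consequently $g^i$ (hence $g^{-i}$) is itself fixed point free of order $p$, because if $g^i\alpha = \alpha$ and $ij \equiv 1 \pmod p$ then $g\alpha = (g^i)^j\alpha = \alpha$. The fiber of $\varphi$ over $g^i$ being a coset of $\Hom(L,\Z_2)$, I write $\hat{f}_0\hat{g}\hat{f}_0^{-1}\hat{g}^{-i} = \hat{\lambda}$, and the same device reduces the normalizer condition $\hat{\mu}\hat{f}_0\hat{g}\hat{f}_0^{-1}\hat{\mu}^{-1} = \hat{g}^i$ to the single equation $\mu + \mu^{g^{-i}} = \lambda$ in $\Hom(L,\Z_2)$, which Lemma \ref{lem:div_g_Hom} applied to $g^{-i}$ solves uniquely.

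Surjectivity of the restricted $\varphi$ has just been established in each case. For injectivity, any $\hat{\mu} \in \Hom(L,\Z_2) \cap N_{O(\hat{L})}(\la\hat{g}\ra)$ satisfies $\hat{\mu}\hat{g}\hat{\mu}^{-1} = \hat{g}^j$ for some $j$, but this element projects to $g$, and since $\la\hat{g}\ra$ has order $p$ by Proposition \ref{prop:order-hatg}, the only element of $\la\hat{g}\ra$ projecting to $g$ is $\hat{g}$ itself; thus $\hat{\mu}$ in fact centralizes $\hat{g}$, and the uniqueness in (2) forces $\mu = 0$. The main bookkeeping obstacle throughout is the conjugation identity $\hat{g}\hat{\mu}\hat{g}^{-1} = \widehat{\mu^{g^{-1}}}$; once this is in hand, the content of the proposition is precisely the invertibility asserted in Lemma \ref{lem:div_g_Hom}.
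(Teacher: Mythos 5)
Your proof is correct and follows essentially the same route as the paper's: both reduce the existence and uniqueness of the adjusted lift $\hat{\mu}\hat{f}_0$ to solving $\mu + \mu^{h} = \lambda$ in $\Hom(\olL,\Z_2)$ for a suitable fixed point free power $h$ of $g$, which is exactly the invertibility asserted in Lemma \ref{lem:div_g_Hom}. The only differences are cosmetic: you encode the cocycle discrepancy via the conjugation identity $\hat{g}\hat{\mu}\hat{g}^{-1}=\widehat{\mu^{g^{-1}}}$ rather than the explicit quadratic forms $\eta,\xi,\zeta$ used in the paper, and you prove (2) first and then (1), whereas the paper obtains (2) as the case $m=1$ of (1).
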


\begin{proof}
Let $\eta$ be a quadratic form on $\olL$ with the associated bilinear form 
$b_g = \vep + \vep^g$ such that 
$\hat{g}(e^\alpha \kappa^a) = e^{g \alpha}\kappa^{\eta(\ola)+a}$. 
Let $f \in N_{O(L)}(\la g \ra)$. 
Then $f^{-1}gf = g^m$ for some $1 \le m \le p-1$, where $m$ and $p$ are 
coprime. 
Let $\hat{f} \in O(\hat{L})$ be a lift of $f$, 
and let $\xi$ be a quadratic form on $\olL$ with associated bilinear form 
$b_f = \vep + \vep^f$ such that 
$\hat{f}(e^\alpha \kappa^a) = e^{f \alpha} \kappa^{\xi(\ola) + a}$.  

Set $h = g^m$. Since $\hat{g}^m$ is a lift of $h$, 
we have $\hat{g}^m(e^\alpha \kappa^a) = e^{h\alpha} \kappa^{\zeta(\ola) + a}$ 
for some quadratic form $\zeta$ on $\olL$ with associated bilinear form 
$b_{h} = \vep + \vep^{h}$.
We also have $\hat{f}^{-1}\hat{g}\hat{f}(e^\alpha \kappa^a) 
= e^{h\alpha} \kappa^{\xi(\overline{h\alpha}) + \eta(\overline{f \alpha})
+ \xi(\ola) + a}$. 
Both $\hat{g}^m$ and $\hat{f}^{-1}\hat{g}\hat{f}$ are lifts of $h$, 
so $\zeta + \eta^f + \xi + \xi^h = \lambda$ for some 
$\lambda \in \Hom(\olL,\Z_2)$ by \eqref{eq:ext_OhatL}, where 
$\eta^f$ and $\xi^h$ are defined by $\eta^f(\ola) = \eta(\overline{f \alpha})$ 
and $\xi^h(\ola) = \xi(\overline{h\alpha})$, respectively. 
There is a unique $\mu \in \Hom(\olL,\Z_2)$ such that 
$\lambda = \mu + \mu^h$ by Lemma \ref{lem:div_g_Hom}, 
where $\mu^h$ is defined by $\mu^h(\ola) = \mu(\overline{h\alpha})$. 
Set $\phi = \hat{f}\hat{\mu}$. 
Then $\phi(e^\alpha \kappa^a) = e^{f \alpha} \kappa^{(\xi+\mu)(\ola) + a}$,  
and  
$\phi^{-1} \hat{g} \phi(e^\alpha \kappa^a) = \hat{g}^m(e^\alpha \kappa^a)$. 
Therefore, $\phi$ is a unique lift of $f$ satisfying 
$\phi^{-1}\hat{g}\phi = \hat{g}^m$. 
Thus the assertion (1) holds. 

The assertion (2) follows from the above argument with $m = 1$. 
\end{proof}

If $g$ is the $-1$-isometry of $L$, then $g$ is fixed point free of order $2$. 
In this case,  we have $b_g = 0$. 
Define $\theta \in O(\hat{L})$ by 
\begin{equation*}
  \theta(e^\alpha \kappa^a) = e^{-\alpha} \kappa^a
\end{equation*}
for $\alpha \in L$ and $a \in \Z_2$. 
Then $\theta$ is a lift of $-1$ with order $2$. 
Moreover, $\phi \theta = \theta \phi$ for any $\phi \in O(\hat{L})$. 
For any fixed point free $f \in O(L)$ of order an odd integer $p \ge 3$, 
we have $-f$ is fixed point free on $L$ of order $2p$, and $\hat{f} \theta$ 
is a lift of $-f$.

\subsection{Automorphisms of $V_L$}\label{subsec:aut_VL} 

Let $(L, \la \,\cdot\,,\,\cdot\,\ra)$ be a positive definite even lattice. 
The structure of the automorphism group $\Aut(V_L)$ 
of the vertex operator algebra $V_L$ associated with $L$ was studied in 
\cite[Section 2]{DN1999}. 
Let $\h = \C \otimes_\Z L$. We extend $\la \,\cdot\,,\,\cdot\,\ra$ to 
$\h \times \h \to \C$ by $\C$-linearly. 
Let $\hat{\h} = \h \otimes \C[t,t^{-1}] \oplus \C c$ 
be the corresponding affine Lie algebra. 
Let $M(1)$ be the $\hat{\h}$-module induced from 
an $\h \otimes \C[t] \oplus \C c$-module $\C$, 
where $\h \otimes \C[t]$ acts trivially on $\C$ and $c$ acts as $1$ on $\C$. 
We denote by $h(n)$ the action of $h \otimes t^n$ on $M(1)$ for $h \in \h$. 

Let $\C[L]_\vep = \C[\hat{L}]/(\kappa+1)\C[\hat{L}]$ be the quotient algebra 
of the group algebra $\C[\hat{L}]$ of $\hat{L}$ by the ideal generated 
by $\kappa+1$. 
We use the same symbol $e^\alpha$ to denote the image of $e^\alpha \in \hat{L}$ 
in $\C[L]_\vep$. 
Then $\hat{g} \in O(\hat{L})$ defined in \eqref{def-hatg} acts on $\C[L]_\vep$ by 
$\hat{g}(e^\alpha) = (-1)^{\eta(\ola)} e^{g\alpha}$. 

The vertex operator algebra $V_L$ is defined to be 
$V_L = M(1) \otimes \C[L]_\vep$ 
as a vector space \cite[Chapter 8]{FLM1988}, \cite[Section 6.4]{LL2004}. 
Any $\hat{g} \in O(\hat{L})$ acts on $V_L$ by 
\begin{equation*}
  \hat{g} (h_1(-n_1) \cdots h_r(-n_r) \otimes e^\alpha) 
  = g(h_1)(-n_1) \cdots g(h_r)(-n_r) \otimes \hat{g}(e^\alpha)
\end{equation*}
for $h_i \in \h$, $n_i > 0$, and $\alpha \in L$. 
Set
\begin{equation*}
  N(V_L) = \{ \exp(v_{(0)}) \mid v \in (V_L)_1 \}, 
\end{equation*}
which is a normal subgroup of $\Aut(V_L)$ as 
$\phi \exp(v_{(0)}) \phi^{-1} = \exp((\phi v)_{(0)})$ for $\phi \in \Aut(V_L)$. 
It is known \cite[Theorem 2.1]{DN1999} that
\[
  \Aut(V_L) = N(V_L) O(\hat{L}) \quad \text{with} \quad 
  N(V_L) \cap O(\hat{L}) \supset \Hom(L,\Z_2).
\]

Now, assume that $L(2) = \varnothing$,  
where $L(2) = \{ \alpha \in L \mid \la \alpha,\alpha \ra = 2 \}$. 
Then $(V_L)_1 = \{ h(-1)\1 \mid h \in \h \}$, and 
\begin{equation}\label{eq:N_cap_OhatL}
  N(V_L) = \{ \exp(h(0)) \mid h \in \h \} \quad \text{with} \quad 
  N(V_L) \cap O(\hat{L}) = \Hom(L,\Z_2). 
\end{equation}
Thus $\varphi : O(\hat{L}) \to O(L)$ in \eqref{eq:ext_OhatL} can be extended to 
$\varphi : \Aut(V_L) \to O(L)$ with kernel $\Ker \varphi = N(V_L)$,   
and we obtain an exact sequence
\begin{equation}\label{eq:ex_seq_Aut_VL}
  1 \longrightarrow N(V_L) \longrightarrow \Aut(V_L) 
  \stackrel{\varphi}{\longrightarrow} O(L) \longrightarrow 1 
\end{equation}
of groups, see \cite[Remark 5.14]{LY2014}. 
The automorphism $\exp(h(0))$ acts on $V_L$ by 
\begin{equation*}
  \exp(h(0)) (h_1(-n_1) \cdots h_r(-n_r) \otimes e^\alpha) 
  = \exp(\la h,\alpha \ra) h_1(-n_1) \cdots h_r(-n_r) \otimes e^\alpha.
\end{equation*}

The next theorem is a refinement of \cite[Theorem 5.15]{LY2014}. 

\begin{theorem}\label{thm:C_AutVL_hatg}
Let $L$ be a positive definite even lattice with $L(2) = \varnothing$. 
Let $g \in O(L)$ be fixed point free on $L$ of order an odd integer $p \ge 3$, 
and let $\hat{g} \in O(\hat{L})$ be a lift of $g$. 

\textup{(1)} 
$C_{\Aut(V_L)}(\hat{g}) = C_{N(V_L)}(\hat{g}) : C_{O(\hat{L})}(\hat{g})$ is a split extension of 
$C_{N(V_L)}(\hat{g})$ by $C_{O(\hat{L})}(\hat{g})$.

\textup{(2)} 
$N_{\Aut(V_L)}(\la \hat{g} \ra) = C_{N(V_L)}(\hat{g}) : N_{O(\hat{L})}(\la \hat{g} \ra)$ 
is a split extension of 
$C_{N(V_L)}(\hat{g})$ by $N_{O(\hat{L})}(\la \hat{g} \ra)$.

\textup{(3)} 
$C_{N(V_L)}(\hat{g}) = \{ \exp(h(0)) \mid h \in 2\pi\sqrt{-1}((1-g)L)^* \}$.

\textup{(4)} 
$C_{N(V_L)}(\hat{g}) \cong \Hom(L/(1-g)L,\Z_p)$, 
where the left-hand side is a multiplicative group and the right-hand side is an additive group. 
The isomorphism is given by assigning 
$\exp(2\pi\sqrt{-1} \gamma(0)) \in C_{N(V_L)}(\hat{g})$ with $\gamma \in ((1-g)L)^*$ to the $\Z$-module homomorphism
\[
  L/(1-g)L \to \Z_p; \quad 
  \alpha + (1-g)L \mapsto p \la \gamma, \alpha + (1-g)L \ra + p\Z
\]
for $\alpha \in L$.
\end{theorem}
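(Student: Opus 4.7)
The plan is to reduce parts (1) and (2) to the exact sequence \eqref{eq:ex_seq_Aut_VL} together with Proposition \ref{prop:OL_OhatL}, and to handle parts (3) and (4) by direct computation, using the fact that $g$ being fixed point free of odd prime-power-like order forces the quotient $L/(1-g)L$ to be annihilated by $p$.

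For part (1), I would apply $\varphi$ of \eqref{eq:ex_seq_Aut_VL} to $C_{\Aut(V_L)}(\hat{g})$. If $\phi \hat{g} = \hat{g} \phi$, then $\varphi(\phi) g = g \varphi(\phi)$, so $\varphi$ restricts to a map $C_{\Aut(V_L)}(\hat{g}) \to C_{O(L)}(g)$. Its kernel is $C_{\Aut(V_L)}(\hat{g}) \cap N(V_L) = C_{N(V_L)}(\hat{g})$, which is normal. Surjectivity follows from Proposition \ref{prop:OL_OhatL}(2): every $f \in C_{O(L)}(g)$ has a unique lift $\hat{f} \in C_{O(\hat{L})}(\hat{g}) \subseteq C_{\Aut(V_L)}(\hat{g})$, and the composition with $\varphi$ recovers $f$. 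This simultaneously produces a splitting $C_{O(L)}(g) \cong C_{O(\hat{L})}(\hat{g}) \to C_{\Aut(V_L)}(\hat{g})$. Part (2) follows from the exact same argument with $N_{O(L)}(\la g \ra)$, $N_{O(\hat{L})}(\la \hat{g} \ra)$ in place of the centralizers, invoking Proposition \ref{prop:OL_OhatL}(1).

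For part (3), I would use $\hat{g}\exp(h(0))\hat{g}^{-1} = \exp((gh)(0))$, which shows $\exp(h(0))$ centralizes $\hat{g}$ if and only if $\exp(((g-1)h)(0))$ acts trivially on $V_L$. Since this element of $N(V_L)$ acts on $e^\alpha$ as multiplication by $\exp(\la (g-1)h,\alpha\ra)$, triviality is equivalent to $(g-1)h \in 2\pi\sqrt{-1}L^*$. Using orthogonality of $g$ we convert the condition to $h \in 2\pi\sqrt{-1}((g^{-1}-1)L)^*$, and finally observing $(g^{-1}-1)L = -g^{-1}(g-1)L = (1-g)L$ yields $h \in 2\pi\sqrt{-1}((1-g)L)^*$ as required.

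For part (4), identify
\[
  C_{N(V_L)}(\hat{g}) \;\cong\; \bigl((1-g)L\bigr)^*/L^*
\]
by sending $\exp(2\pi\sqrt{-1}\gamma(0))$ to $\gamma + L^*$; this is well-defined and injective because $\exp(2\pi\sqrt{-1}\gamma(0)) = \id$ on $V_L$ iff $\gamma \in L^*$. The pairing $(\gamma + L^*, \alpha + (1-g)L) \mapsto \la \gamma,\alpha\ra + \Z$ gives a nondegenerate pairing with $L/(1-g)L$ into $\Q/\Z$. The crucial input is that $L/(1-g)L$ is annihilated by $p$: since $g$ is fixed point free of order $p$, $\Phi_p(g) = 1 + g + \cdots + g^{p-1} = 0$ on $L$, and writing $\Phi_p(X) = p + (X-1)q(X)$ for some $q \in \Z[X]$ gives $p = (1-g)q(g)$ on $L$, hence $pL \subset (1-g)L$. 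Consequently the pairing takes values in $\tfrac{1}{p}\Z/\Z \cong \Z_p$, and multiplication by $p$ produces the stated isomorphism onto $\Hom(L/(1-g)L,\Z_p)$.

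The main obstacle is part (4): one must simultaneously (a) see that $((1-g)L)^*/L^*$ is the right description of $C_{N(V_L)}(\hat g)$, (b) establish the $p$-torsion of $L/(1-g)L$ via the identity $p = (1-g)q(g)$ in $\Z[g]$, and (c) normalize the duality so that the pairing lands in $\Z_p$ with the precise factor $p$ appearing in the statement. Parts (1) and (2) reduce cleanly to Proposition \ref{prop:OL_OhatL} and present no serious difficulty beyond checking that the unique lifts it provides are indeed the right splittings.
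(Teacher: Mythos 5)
Your proposal is correct and follows essentially the same route as the paper: parts (1)--(2) via the exact sequence \eqref{eq:ex_seq_Aut_VL} and the unique lifts from Proposition \ref{prop:OL_OhatL}, part (3) by the conjugation formula $\hat{g}\exp(h(0))\hat{g}^{-1}=\exp((gh)(0))$, and part (4) by identifying $C_{N(V_L)}(\hat g)$ with $((1-g)L)^*/L^*$ and using $pL\subset(1-g)L$ (the paper derives this from $\alpha\equiv g^i\alpha\pmod{(1-g)L}$ rather than your factorization $1+X+\cdots+X^{p-1}=p+(X-1)q(X)$, but these are equivalent; note only that your symbol $\Phi_p$ is not the cyclotomic polynomial when $p$ is composite, though the identity you actually use still holds).
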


\begin{proof}
The assertion (1) follows from (2) of Proposition \ref{prop:OL_OhatL}, 
\eqref{eq:N_cap_OhatL}, and \eqref{eq:ex_seq_Aut_VL}. 

Let $\phi \in N_{\Aut(V_L)}(\la \hat{g} \ra)$. 
Then $\phi \hat{g} \phi^{-1} = \hat{g}^m$  
for some $1 \le m \le p-1$, where $m$ and $p$ are coprime. 
Set $f = \varphi(\phi)$. Then $fgf^{-1} = g^m$. 
As shown in the proof of Proposition \ref{prop:OL_OhatL}, 
there is a unique lift $\hat{f}$ of $f$ satisfying $\hat{f}\hat{g}\hat{f}^{-1} = \hat{g}^m$. 
Then $\hat{f} \in N_{O(\hat{L})}(\la \hat{g} \ra)$  
and $\hat{f}^{-1} \phi \in C_{\Aut(V_L)}(\hat{g})$, so we have  
$\phi \in C_{N(V_L)}(\hat{g}) N_{O(\hat{L})}(\la \hat{g} \ra)$ by the assertion (1). 
Thus the assertion (2) holds by (1) of Proposition \ref{prop:OL_OhatL}. 

For $h \in \h$, we have $\hat{g} \exp(h(0)) \hat{g}^{-1} = \exp((gh)(0))$. 
Hence $\exp(h(0)) \in C_{N(V_L)}(\hat{g})$ if and only if 
$\la gh - h, \alpha \ra \in 2\pi\sqrt{-1}\Z$ for all $\alpha \in L$. 
Since $g \in O(L)$, 
the condition on $h$ is equivalent to that $\la h, (1-g)\alpha \ra \in 2\pi\sqrt{-1}\Z$ 
for all $\alpha \in L$. Thus the assertion (3) holds. 

For $\alpha \in L$, we have 
$\alpha \equiv g^i \alpha \pmod{(1-g)L}$, $1 \le i \le p-1$. 
Since $g$ is fixed point free of order $p$, 
we also have $1 + g + \cdots + g^{p-1} = 0$ on $L$. 
Hence $p\alpha \equiv 0 \pmod{(1-g)L}$, so $L \supset (1-g)L \supset pL$. 
Thus $L^* \subset ((1-g)L)^* \subset \frac{1}{p}L^*$. 

For $\gamma \in L^*$, define $\chi_\gamma \in \Hom(L,\Z_p)$ by 
$\chi_\gamma(\alpha) = \la \gamma, \alpha \ra + p\Z$. 
Then the map $\chi : L^* \to \Hom(L,\Z_p); \gamma \mapsto \chi_\gamma$ 
is a surjective homomorphism of $\Z$-modules. 
The kernal of $\chi$ is $pL^*$. 
Moreover, $\Ker \chi_\gamma \supset (1-g)L$ if and only if $\gamma \in p((1-g)L)^*$. 
Thus $\gamma \mapsto \chi_{p\gamma}$ gives an isomorphism 
\[
  ((1-g)L)^*/L^* \to \Hom(L/(1-g)L,\Z_p)
\]
of additive groups. 
Since $\gamma \mapsto \exp(2\pi\sqrt{-1} \gamma(0))$ induces an isomorphism
\[
  ((1-g)L)^*/L^* \to C_{N(V_L)}(\hat{g})
\]
from an additive group to a multiplicative group, the assertion (4) holds. 
\end{proof}

\begin{remark}\label{rmk:C_AutVL_hatg}
The assumption that the order of $g$ is odd is not necessary 
for the assertions (3) and (4) of Theorem \ref{thm:C_AutVL_hatg}.
\end{remark}

If $p$ is an odd prime in the above theorem, 
then $\rank L$ is divisible by $p-1$, and $L/(1-g)L \cong p^m$ 
is elementary abelian of order $p^m$ 
by \cite[Lemma A.1]{GL2011b}, where $m = \rank L/(p-1)$.
Hence the following corollary holds by Proposition \ref{prop:OL_OhatL}.

\begin{corollary}\label{cor:C_AutVL_hatg}
Assume that $p$ is an odd prime in the above theorem. 
Then $C_{\Aut(V_L)}(\hat{g}) \cong p^{m} :  C_{O(L)}(g)$ and 
$N_{\Aut(V_L)}(\la \hat{g} \ra) \cong p^{m} : N_{O(L)}(\la g \ra)$, 
where $m = \rank L/(p-1)$.
\end{corollary}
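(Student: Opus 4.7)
The plan is to assemble the corollary from three pieces already in hand: parts (1), (2) and (4) of Theorem \ref{thm:C_AutVL_hatg}, Proposition \ref{prop:OL_OhatL}, and the structural fact about $L/(1-g)L$ coming from \cite[Lemma A.1]{GL2011b}.

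First, I would invoke Theorem \ref{thm:C_AutVL_hatg}(1)--(2) to write
\[
  C_{\Aut(V_L)}(\hat{g}) = C_{N(V_L)}(\hat{g}) : C_{O(\hat{L})}(\hat{g}), \qquad
  N_{\Aut(V_L)}(\la \hat{g} \ra) = C_{N(V_L)}(\hat{g}) : N_{O(\hat{L})}(\la \hat{g} \ra),
\]
as split extensions. Next, since $p$ is an odd prime, Proposition \ref{prop:OL_OhatL} applies: the homomorphism $\varphi$ restricts to isomorphisms $C_{O(\hat{L})}(\hat{g}) \cong C_{O(L)}(g)$ and $N_{O(\hat{L})}(\la \hat{g} \ra) \cong N_{O(L)}(\la g \ra)$. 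This reduces the problem to identifying $C_{N(V_L)}(\hat{g})$ as an elementary abelian $p$-group of rank $m$.

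For this last step, I would apply Theorem \ref{thm:C_AutVL_hatg}(4) to obtain $C_{N(V_L)}(\hat{g}) \cong \Hom(L/(1-g)L, \Z_p)$. Because $p$ is an odd prime and $g \in O(L)$ is fixed-point-free of order $p$, \cite[Lemma A.1]{GL2011b} gives $L/(1-g)L \cong p^m$ (elementary abelian of order $p^m$), where $m = \rank L /(p-1)$; in particular $(p-1)$ divides $\rank L$. Consequently
\[
  C_{N(V_L)}(\hat{g}) \cong \Hom(p^m,\Z_p) \cong p^m.
\]
Substituting this back into the two split extensions yields the claimed isomorphisms $C_{\Aut(V_L)}(\hat{g}) \cong p^m : C_{O(L)}(g)$ and $N_{\Aut(V_L)}(\la \hat{g} \ra) \cong p^m : N_{O(L)}(\la g \ra)$.

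Almost no obstacle is expected: the argument is purely an assembly of already-proved ingredients, with the only non-trivial input being the cited elementary abelian structure of $L/(1-g)L$ for $p$ prime. The one point worth double-checking is that the splittings in Theorem \ref{thm:C_AutVL_hatg}(1)--(2) and the lifting isomorphism of Proposition \ref{prop:OL_OhatL} are compatible in the sense needed to combine them into a single semidirect product description, which they are because $O(\hat{L})$ is literally a subgroup of $\Aut(V_L)$ acting on the normal subgroup $N(V_L)$ and the identifications of $C_{O(\hat{L})}(\hat{g})$ and $N_{O(\hat{L})}(\la \hat{g} \ra)$ with their $O(L)$-counterparts are realized inside that same group.
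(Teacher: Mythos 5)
Your proposal is correct and follows essentially the same route as the paper: the paper likewise combines parts (1), (2), and (4) of Theorem \ref{thm:C_AutVL_hatg} with Proposition \ref{prop:OL_OhatL} and the fact from \cite[Lemma A.1]{GL2011b} that $L/(1-g)L \cong p^m$ is elementary abelian with $m = \rank L/(p-1)$. Nothing further is needed.
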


\subsection{Parafermion vertex operator algebra $K(\mathfrak{sl}_2,k)$}
\label{subsec:paraf_VOA}

We collect basic properties of 
the parafermion vertex operator algebra $K(\mathfrak{sl}_2,k)$ 
associated with $\mathfrak{sl}_2$ and a positive integer $k$, 
see \cite{ALY2014, ALY2019, DLWY2010, DLY2009, DW2016} for details.  
If $k=1$, then $K(\mathfrak{sl}_2,k)$ reduces to  
the trivial vertex operator algebra $\C\1$. 
We assume that $k \ge 2$. 

(1) $M^0 = K(\mathfrak{sl}_2,k)$ is a simple, self-dual, rational, and
$C_2$-cofinite vertex operator algebra of CFT-type with central charge 
$2(k-1)/(k+2)$.

(2) $K(\mathfrak{sl}_2,2) \cong L(1/2,0)$ is 
the simple Virasoro vertex operator algebra of central charge $1/2$. 
If $k \ge 3$, there is, up to a scalar multiple, a unique primary vector $W^3$ 
of weight $3$, and $K(\mathfrak{sl}_2,k)$ is generated by the vector $W^3$  
as a vertex operator algebra.  

(3) $M^{i, j}$, $0 \le i \le k$, $0 \le j < k$, are irreducible $M^0$-modules with
\begin{equation}\label{eq:isom_Mij}
  M^{i, j} \cong M^{k-i, j-i}, 
\end{equation}
and $M^{i, j}$, $0 \le j < i \le k$, form a complete set of 
representatives of the equivalence classes of irreducible $M^0$-modules. 
The index $j$ is considered to be modulo $k$. 
Note that $M^0 = M^{k,0} = M^{0,0}$.

(4) The top level of $M^{i, j}$ is one dimensional with the conformal weight  
\begin{equation}\label{conf_wt_Mij}
  h(M^{i, j}) = \frac{1}{2k(k+2)}\Big( k(i-2j) - (i-2j)^2 + 2k(i-j+1)j \Big)
\end{equation}
for $0 \le j \le i \le k$. 

(5) The fusion product of irreducible $M^0$-modules is given by 
\begin{equation}\label{eq:paraf_fusion}
  M^{i_1, j_1} \boxtimes_{M^0} M^{i_2, j_2} 
  = \sum_{r \in R(i_1,i_2)} M^{r, (2j_1 - i_1 + 2j_2 - i_2 + r)/2}, 
\end{equation}
where $R(i_1,i_2)$ is the set of integers $r$ such that
\begin{equation*}
  |i_1-i_2| \le r \le \min \{i_1+i_2, 2k - i_1 - i_2\}, \quad i_1+i_2+r \in 2\Z.
\end{equation*} 

Let $M^j = M^{k, j}$ for $0 \le j \le k-1$, which are the simple current $M^0$-modules. 
In fact, 
\begin{equation}\label{eq:paraf_sc}
  M^p \boxtimes_{M^0} M^{i, j} = M^{i, j+p}. 
\end{equation}

(6) If $k \ge 3$, the automorphism group $\Aut(M^0)$ of $M^0$ is 
generated by an involution $\theta$ such that   
$\theta(W^3) = - W^3$, and 
\begin{equation}\label{eq:act_theta}
  M^{i, j} \circ \theta \cong M^{i, i-j}.
\end{equation}

Now, assume that $k \ge 3$. 
An irreducible $M^0$-module $M^{i, j}$ is said to be $\theta$-stable 
if $M^{i, j} \circ \theta \cong M^{i, j}$. 
We see from \eqref{eq:isom_Mij} and \eqref{eq:act_theta} that 
$M^{i, j}$ is $\theta$-stable if and only if $i = 2j$ for 
$0 \le j  \le \lfloor k/2 \rfloor$, or $k$ is even and $(i,j) = (k/2,0)$, 
where $\lfloor k/2 \rfloor$ is the largest integer which does not exceed $k/2$. 
The irreducible $M^0$-modules 
$M^{2j, j}$, $0 \le j  \le \lfloor k/2 \rfloor$, are said to be of $\sigma$-type. 
An $M^0$-module is said to be of $\sigma$-type 
if it is a direct sum of irreducible $M^0$-modules of $\sigma$-type.

\section{$\sigma$-involutions}\label{sec:sigma-involution}

In this section, we introduce a $\sigma$-involution associated with 
the parafermion vertex operator algebra $K(\mathfrak{sl}_2,k)$. 

\subsection{Automorphism $\tau_W$}\label{subsec:aut-tW}

First of all, we recall a $\Z_k$ symmetry in the fusion algebra of 
$M^0 = K(\mathfrak{sl}_2,k)$ for $k \ge 2$ \cite{ZF1985}.  
For $0 \le i \le k$ and $0 \le j < k$, 
let $0 \le l < 2k$ be such that $l \equiv i - 2j \pmod{2k}$, and set  
\begin{equation*}
  \widetilde{M}^{i,l} = M^{i,j}. 
\end{equation*}
Then \eqref{eq:paraf_fusion} can be written as 
\begin{equation}\label{eq:paraf_fusion-new}
  \widetilde{M}^{i_1,l_1} \boxtimes_{M^0} \widetilde{M}^{i_2,l_2} 
  = \sum_{r \in R(i_1,i_2)} \widetilde{M}^{r,l_1+l_2}.
\end{equation}
Moreover, 
\begin{gather}
  \widetilde{M}^{i,l} \cong \widetilde{M}^{k-i,k+l},\label{eq:isom_Mij_new}\\
  \widetilde{M}^{i,l} \circ \theta \cong \widetilde{M}^{i,-l}\label{eq:act_theta_new}
\end{gather}
by \eqref{eq:isom_Mij} and \eqref{eq:act_theta}, respectively. 

The following theorem is known,  
see \cite[Section 4]{YY2019} and \cite{ZF1985}. 

\begin{theorem}\label{thm:Zk_sym}
Let $k \ge 2$ be an integer. Then a map defined by 
\[
  \widetilde{M}^{i,l} \mapsto \zeta_k^l \widetilde{M}^{i,l} \quad \text{for} \ 
  0 \le i \le k \ \text{and} \ 0 \le l < 2k \ \text{with} \ i \equiv l \pmod{2}
\]
is compatible with \eqref{eq:paraf_fusion-new} and \eqref{eq:isom_Mij_new},  
and it induces an automorphism of the fusion algebra of $M^0$ of order $k$, 
where $M^0 = K(\mathfrak{sl}_2,k)$ and $\zeta_k = \exp(2\pi\sqrt{-1}/k)$. 
\end{theorem}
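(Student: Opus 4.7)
The plan is to verify four things in turn: compatibility of the scaling with the identification \eqref{eq:isom_Mij_new}, compatibility with the fusion product \eqref{eq:paraf_fusion-new}, that the resulting linear map is an automorphism of the fusion algebra, and that its order is exactly $k$.

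For compatibility with \eqref{eq:isom_Mij_new}, the labels $\widetilde{M}^{i,l}$ and $\widetilde{M}^{k-i,k+l}$ denote isomorphic $M^0$-modules, while the assignment attaches to them the scalars $\zeta_k^l$ and $\zeta_k^{k+l}$ respectively; these agree because $\zeta_k^k = 1$. Hence the scaling depends only on the isomorphism class, and the assignment descends to a well-defined $\C$-linear endomorphism $\Phi$ of the fusion algebra of $M^0$ on the basis given by equivalence classes of irreducible $M^0$-modules.

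To see that $\Phi$ respects the fusion product \eqref{eq:paraf_fusion-new}, apply $\Phi$ to both sides: the left-hand side picks up the factor $\zeta_k^{l_1}\zeta_k^{l_2} = \zeta_k^{l_1+l_2}$, and each summand $\widetilde{M}^{r,l_1+l_2}$ on the right-hand side is also scaled by $\zeta_k^{l_1+l_2}$. Here one needs the parity condition $r \equiv l_1+l_2 \pmod 2$ to ensure each summand is a valid label; this follows from $i_j \equiv l_j \pmod 2$ for $j=1,2$ together with the condition $r \equiv i_1+i_2 \pmod 2$ built into the definition of $R(i_1,i_2)$. Therefore $\Phi$ is a $\C$-algebra endomorphism, and since it scales each basis element by a nonzero complex number it is invertible, hence an automorphism.

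Finally, $\Phi^k$ scales every basis element by $\zeta_k^{kl} = 1$, so $\Phi^k = \id$ and the order of $\Phi$ divides $k$. The label $(i,l) = (1,1)$, valid for every $k \ge 2$ (it corresponds to the irreducible module $M^{1,0}$), exhibits a basis element on which $\Phi$ acts by the primitive $k$-th root of unity $\zeta_k$, so the order of $\Phi$ is exactly $k$. The only real obstacle is the bookkeeping between the two labelling conventions $M^{i,j}$ and $\widetilde{M}^{i,l}$ and the parity constraint $i \equiv l \pmod 2$; once these are organised, every step is immediate from the explicit fusion formula.
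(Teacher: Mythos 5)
Your proof is correct. The paper itself offers no argument for this theorem (it is quoted as known, with references to \cite{YY2019} and \cite{ZF1985}), so there is no internal proof to compare against; your direct verification supplies exactly the missing computation. All the key points are handled properly: the scalar $\zeta_k^l$ is invariant under the relabelling $(i,l)\mapsto(k-i,k+l)$ because $\zeta_k^k=1$, so the map descends to the basis of isomorphism classes; the parity condition $r\equiv i_1+i_2\equiv l_1+l_2\pmod 2$ built into $R(i_1,i_2)$ guarantees each summand $\widetilde{M}^{r,l_1+l_2}$ is a legitimate label, and the common factor $\zeta_k^{l_1+l_2}$ on both sides of \eqref{eq:paraf_fusion-new} gives multiplicativity; and the element $\widetilde{M}^{1,1}=M^{1,0}$, on which the map acts by the primitive root $\zeta_k$, pins the order down to exactly $k$ rather than a proper divisor.
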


The next lemma is a consequence of \eqref{eq:isom_Mij_new}. 

\begin{lemma}\label{lem:isom_M0_new}
$\widetilde{M}^{i,l}$ for $0 \le i \le k$ and $0 \le l < k$ with 
$i \equiv l \pmod{2}$ form a complete set of representatives of 
the equivalence classes of irreducible $M^0$-modules.
\end{lemma}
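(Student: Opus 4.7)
The plan is to transport the known classification of irreducible $M^0$-modules from the $(i,j)$-parameterization of item (3) in Section \ref{subsec:paraf_VOA} to the $(i,l)$-parameterization via the substitution $l \equiv i - 2j \pmod{2k}$, and then to cut the range of $l$ in half using the involution \eqref{eq:isom_Mij_new}. No deep input beyond item (3) and \eqref{eq:isom_Mij_new} is required; the work is essentially bookkeeping with careful attention to parities and counts.

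First I would check that, for each fixed $i$ with $0 \le i \le k$, as $j$ runs through $\{0,1,\dots,k-1\}$ the quantity $(i-2j) \bmod 2k$ runs bijectively over all residues in $\{0,1,\dots,2k-1\}$ of the same parity as $i$. Indeed, the raw values $i, i-2, \dots, i-2(k-1)$ are $k$ integers whose pairwise differences are nonzero even integers of absolute value strictly less than $2k$, so they are distinct modulo $2k$, and there are exactly $k$ residues modulo $2k$ of each parity. By item (3), every irreducible $M^0$-module is isomorphic to some $M^{i,j}$ with $0 \le i \le k$ and $0 \le j < k$, hence by the substitution to some $\widetilde{M}^{i,l}$ with $0 \le i \le k$, $0 \le l < 2k$, and $i \equiv l \pmod 2$.

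Next I would invoke \eqref{eq:isom_Mij_new} as the involution $(i,l) \mapsto (k-i,\,(l+k)\bmod 2k)$ on this index set. The involution is fixed-point-free, since a fixed point would force $k \equiv 0 \pmod{2k}$. Therefore its orbits all have size two, and in each orbit exactly one representative satisfies $0 \le l < k$ (the other falling in $[k, 2k)$). Consequently every equivalence class of irreducible $M^0$-modules already has a representative $\widetilde{M}^{i,l}$ with $(i,l)$ in the set proposed by the lemma.

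Finally I would close with a cardinality argument. The proposed set $\{(i,l) : 0 \le i \le k,\ 0 \le l < k,\ i \equiv l \pmod 2\}$ has exactly $k(k+1)/2$ elements (a short case split on the parity of $k$), which matches $|\{(i,j) : 0 \le j < i \le k\}| = k(k+1)/2$, the number of equivalence classes furnished by item (3). Combined with the surjectivity established in the previous paragraph, this forces the proposed list to be simultaneously non-redundant and complete. The only substantive point requiring care is the freeness of the involution \eqref{eq:isom_Mij_new} on the index set; I do not anticipate any genuine obstacle beyond that.
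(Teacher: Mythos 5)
Your proposal is correct and follows exactly the route the paper intends: the paper gives no written proof beyond noting that the lemma is a consequence of \eqref{eq:isom_Mij_new}, and your argument (the bijection $j \mapsto l$ for fixed $i$, the fixed-point-free involution $(i,l)\mapsto(k-i,k+l)$ halving the range of $l$, and the count $k(k+1)/2$ matching item (3)) is precisely the bookkeeping that justifies it.
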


Let $(V,Y,\1,\omega)$ be a vertex operator algebra, and assume that 
it contains a vertex operator subalgebra $W$ isomorphic to $K(\mathfrak{sl}_2,k)$. 
Then $V$ is a direct sum of irreducible $W$-modules 
by \cite[Theorem 4.5]{ABD2004}.  
For $0 \le l < k$, 
we denote by $V_W[l]$ the sum of all irreducible $W$-submodules 
of $V$ isomorphic to $\widetilde{M}^{i,l}$ for some $0 \le i \le k$ 
with $i \equiv l \pmod{2}$. 
Then 
\begin{equation}\label{eq:dec_V}
  V = \bigoplus_{l=0}^{k-1} V_W[l]
\end{equation}
by Lemma \ref{lem:isom_M0_new}. 
In view of \eqref{eq:isom_Mij_new}, we may consider $l$ for $V_W[l]$ to be 
modulo $k$. Then \eqref{eq:paraf_fusion-new} implies that 
\begin{equation}\label{eq:u_nv}
  Y(u,x)v \in V_W[l_1 + l_2][[x,x^{-1}]]
\end{equation}
for $u \in V_W[l_1]$ and $v \in V_W[l_2]$. 
The following theorem holds by \eqref{eq:dec_V} and \eqref{eq:u_nv}.

\begin{theorem}\label{thm:tau-autom}
Let $V$ be a vertex operator algebra containing 
a vertex operator subalgebra $W \cong K(\mathfrak{sl}_2,k)$  
for an integer $k \ge 2$. Then 
a linear map $\tau_W$ on $V$ defined by 
\[
  \tau_W(v) = \zeta_k^l v \quad \text{for} \ v \in V_W[l],  0 \le l < k
\]
is an automorphism of the vertex operator algebra $V$, 
where $\zeta_k = \exp(2\pi\sqrt{-1}/k)$.
\end{theorem}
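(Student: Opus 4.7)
The plan is to verify directly that $\tau_W$ satisfies the axioms of a vertex operator algebra automorphism, using only the two structural inputs that have already been established: the decomposition \eqref{eq:dec_V} and the grading compatibility \eqref{eq:u_nv}. First, \eqref{eq:dec_V} gives a unique expansion $v = \sum_{l=0}^{k-1} v_l$ with $v_l \in V_W[l]$ for every $v \in V$, so setting $\tau_W(v) = \sum_{l=0}^{k-1} \zeta_k^l v_l$ unambiguously defines a linear endomorphism of $V$.

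The heart of the argument is the compatibility $\tau_W(Y(u,x)v) = Y(\tau_W(u),x)\tau_W(v)$, which I would check by bilinearity. For $u \in V_W[l_1]$ and $v \in V_W[l_2]$, \eqref{eq:u_nv} places every coefficient of $Y(u,x)v$ inside $V_W[l_1+l_2]$, so $\tau_W(Y(u,x)v) = \zeta_k^{l_1+l_2} Y(u,x)v$. On the other hand, $Y(\tau_W(u),x)\tau_W(v) = Y(\zeta_k^{l_1}u,x)(\zeta_k^{l_2}v) = \zeta_k^{l_1+l_2}Y(u,x)v$, using the multiplicativity $\zeta_k^{l_1}\zeta_k^{l_2} = \zeta_k^{l_1+l_2}$ of the primitive $k$-th root of unity. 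The two expressions agree.

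For the remaining axioms, the vacuum is preserved because $\1 \in W$, and $W$ corresponds to the $\widetilde{M}^{0,0}$-summand inside $V$, so $\1 \in V_W[0]$ and $\tau_W(\1) = \1$. For the conformal vector $\omega$ of $V$, I would write $\omega = \omega_W + (\omega - \omega_W)$, where $\omega_W$ denotes the Virasoro element of $W$. The difference $\omega - \omega_W$ belongs to the commutant of $W$ in $V$, which constitutes the $\widetilde{M}^{0,0}$-isotypic component of $V$ as a $W$-module, and in particular sits in $V_W[0]$. Combined with $\omega_W \in W \subseteq V_W[0]$, this yields $\omega \in V_W[0]$, so $\tau_W(\omega) = \omega$. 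Finally, $\tau_W$ has order $k$ and thus is invertible.

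There is no substantive obstacle: once the $\Z_k$-grading $V = \bigoplus_{l=0}^{k-1} V_W[l]$ is compatible with $Y$ in the sense of \eqref{eq:u_nv}, the assignment $\tau_W$ is forced to be a VOA automorphism by a purely formal verification. The only point demanding a sentence of justification is the conformal-vector step, which uses the familiar coset observation that $\omega - \omega_W$ lies in the commutant; all other steps are immediate consequences of the direct sum decomposition and the multiplicativity of $\zeta_k^{\,l}$ in $l$.
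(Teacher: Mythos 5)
Your proof is correct and follows essentially the same route as the paper, which simply records that the theorem is a consequence of the decomposition \eqref{eq:dec_V} and the grading compatibility \eqref{eq:u_nv}; you have merely written out the routine verification, including the vacuum and conformal vectors. One small imprecision: the commutant $\Com_V(W)$ is contained in, but in general not equal to, the $\widetilde{M}^{0,0}$-isotypic component of $V$ (it is the space of vacuum-like vectors inside that component), but this containment is all that is needed to conclude $\omega - \omega_W \in V_W[0]$.
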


The fixed point subalgebra $V^{\la \tau_W \ra}$ of $V$ by the automorphism 
$\tau_W$ is $V_W[0]$. 
Note that $V$ is a $\sigma$-type $W$-module if and only if $V = V_W[0]$.

\subsection{$M^{2j, j}$ as $K(\mathfrak{sl}_2,k)^{\la \theta \ra}$-module}
\label{subse:def_sigma}

Let $k \ge 3$. 
Then $\Aut(K(\mathfrak{sl}_2,k))$ is generated by an involution $\theta$. 
The fixed point subalgebra $K(\mathfrak{sl}_2,k)^{\la \theta \ra}$ of 
$K(\mathfrak{sl}_2,k)$ by the involution $\theta$ 
is simple, self-dual, rational, $C_2$-cofinite, and of CFT-type 
by \cite{CM2016} and \cite{Miyamoto2015}. 
The representation theory of $K(\mathfrak{sl}_2,k)^{\la \theta \ra}$ 
was studied in detail \cite{JW2017}, \cite{JW2019}. 
In fact, the irreducible modules and their highest weight vectors 
are obtained in \cite{JW2017}, and the fusion rules are determined in \cite{JW2019}. 
We use those results for $\sigma$-type irreducible $K(\mathfrak{sl}_2,k)$-modules. 

Let $M^{0,+}$ and $M^{0,-}$ be the eigenspaces with eigenvalues 
$1$ and $-1$ for $\theta$ in $M^0 = K(\mathfrak{sl}_2,k)$, respectively. 
Thus $M^{0,+} = K(\mathfrak{sl}_2,k)^{\la \theta \ra}$, and 
$M^{0,-}$ is the irreducible $M^{0,+}$-module generared by the weight $3$ 
primary vector $W^3$. 
We see from \cite[Proposition 3.14]{JW2017} that 
for $0 \le j \le \lfloor k/2 \rfloor$, a $\sigma$-type irreducible 
$M^0$-module $M^{2j,j}$ is a direct sum
\begin{equation*}
  M^{2j,j} = (M^{2j,j})^0 \oplus (M^{2j,j})^1
\end{equation*} 
of two irreducible $M^{0,+}$-modules, 
where the conformal weight of $(M^{2j,j})^0$ is
\begin{equation*}
  h((M^{2j,j})^0) = h(M^{2j,j}) = \frac{j(j+1)}{k+2}, 
\end{equation*}
and 
$h((M^{2j,j})^1) - h((M^{2j,j})^0) = 3$, $1$, or $2$ according as 
$j = 0$, $1 \le j < k/2$, or $k$ is even and $j = k/2$, respectively. 
Note that $(M^{0,0})^0 = M^{0,+}$ and $(M^{0,0})^1 = M^{0,-}$. 

The following theorem is taken from Theorem 5.1 of \cite{JW2019}. 
\begin{theorem}\label{thm:fusion_M2jj01}
Let $k \ge 3$ be an integer.

\textup{(1)} 
For $0 \le j \le \lfloor k/2 \rfloor$ and $\ep_1, \ep_2 \in \{ 0,1 \}$,
\begin{equation}\label{eq:fusion-1}
  (M^{0,0})^{\ep_1} \boxtimes_{M^{0,+}} (M^{2j,j})^{\ep_2} 
  = (M^{2j,j})^{\ep_1 + \ep_2},
\end{equation}
where $\ep_1 + \ep_2$ is considered to be modulo $2$. 

\textup{(2)} 
If $k \ge 4$, then
\begin{equation}\label{eq:fusion-2}
  (M^{2,1})^0 \boxtimes_{M^{0,+}} (M^{2j,j})^0
  = (M^{2(j-1), j-1})^0 + (M^{2j,j})^1 + (M^{2(j+1), j+1})^0
\end{equation}
for $1 \le j \le \lfloor k/2 \rfloor - 1$.

\textup{(3)}
For $j = \lfloor k/2 \rfloor$, 
\begin{equation}\label{fusion-3_o}
  (M^{2,1})^0 \boxtimes_{M^{0,+}} 
  (M^{2j, j})^0 
  = (M^{2(j-1), j-1})^0 
  + (M^{2j, j})^1
\end{equation}
if $k$ is odd, and
\begin{equation}\label{fusion-3_e}
  (M^{2,1})^0 \boxtimes_{M^{0,+}} (M^{k,k/2})^0 
  = (M^{k-2,k/2-1})^0
\end{equation}
if $k$ is even.
\end{theorem}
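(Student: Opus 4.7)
The plan is to reduce the computation to the parafermion fusion rules \eqref{eq:paraf_fusion} for $M^0 = K(\mathfrak{sl}_2,k)$, exploiting that $M^0 = M^{0,+} \oplus M^{0,-}$ is a $\Z_2$-graded simple current extension of $M^{0,+}$ by $M^{0,-}$. The key structural fact is that each $\theta$-stable irreducible $M^0$-module decomposes canonically into two irreducible $M^{0,+}$-modules of opposite $\Z_2$-charge, and the $M^{0,+}$-level fusion is controlled by the $M^0$-level fusion together with this $\Z_2$-grading.

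For part (1), $M^{0,-}$ is an irreducible $M^{0,+}$-module which is a $\Z_2$-simple current satisfying $M^{0,-} \boxtimes_{M^{0,+}} M^{0,-} \cong M^{0,+}$. The fusion $M^{0,-} \boxtimes_{M^{0,+}} (M^{2j,j})^\ep$ is therefore irreducible and shifts the $\Z_2$-grade; since by adjunction it must be a summand of $M^{0,-} \boxtimes_{M^{0,+}} M^{2j,j}$ and the latter is contained (after induction) in $M^0 \boxtimes_{M^0} M^{2j,j} \cong M^{2j,j}$, it coincides with $(M^{2j,j})^{\ep+1}$. Combined with the trivial case $M^{0,+} \boxtimes_{M^{0,+}} (M^{2j,j})^{\ep} = (M^{2j,j})^{\ep}$, this gives \eqref{eq:fusion-1}.

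For parts (2) and (3), I would start from the $M^0$-level product
\begin{equation*}
  M^{2,1} \boxtimes_{M^0} M^{2j,j} = \bigoplus_{r \in R(2,2j)} M^{r,\, r/2},
\end{equation*}
computed from \eqref{eq:paraf_fusion} using the observation that the second index of each summand collapses to $r/2$. The set $R(2,2j) \subset \{2j-2,\,2j,\,2j+2\}$ has three elements for $1 \le j \le \lfloor k/2 \rfloor - 1$, and contracts to two (respectively one) for $j = \lfloor k/2 \rfloor$ when $k$ is odd (respectively even), using the bounds defining $R(i_1,i_2)$ together with the identification \eqref{eq:isom_Mij}. Restricting both sides to $M^{0,+}$, each $\theta$-stable summand $M^{2r,r}$ splits as $(M^{2r,r})^0 \oplus (M^{2r,r})^1$, while on the left $M^{2,1}$ and $M^{2j,j}$ each split into two $\ep$-components; the fusion $M^{2,1} \boxtimes_{M^0} M^{2j,j}$ is then identified with the sum of the four $M^{0,+}$-level products $(M^{2,1})^{\ep_1} \boxtimes_{M^{0,+}} (M^{2j,j})^{\ep_2}$ via Frobenius-type reciprocity for the simple current extension. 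By part (1), the four products are related by toggling the $\ep$-indices using $M^{0,-}$, so the entire problem reduces to determining $(M^{2,1})^0 \boxtimes_{M^{0,+}} (M^{2j,j})^0$.

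The main obstacle is pinning down the $\ep$-index of each $(M^{2r,r})^\ep$ that appears in $(M^{2,1})^0 \boxtimes_{M^{0,+}} (M^{2j,j})^0$. This I would handle using the explicit Virasoro highest-weight vectors for the $(M^{2r,r})^\ep$ constructed in \cite{JW2017} together with an intertwining operator computation, or alternatively via the Verlinde formula applied to the modular $S$-matrix of $M^{0,+}$. The integer gap $h((M^{2j,j})^1) - h((M^{2j,j})^0) = 1$ for $1 \le j < k/2$ (respectively $2$ when $k$ is even and $j = k/2$) is what ultimately controls which $\theta$-eigencomponent occurs, yielding the middle term $(M^{2j,j})^1$ in \eqref{eq:fusion-2} and the appropriate truncations in \eqref{fusion-3_o} and \eqref{fusion-3_e}.
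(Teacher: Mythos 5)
The paper does not actually prove this theorem: it is quoted verbatim from Theorem 5.1 of \cite{JW2019}, so there is no internal argument to compare with. Your proposal is therefore an attempt to rederive the result, and its framework is sound as far as it goes: $M^0 = M^{0,+}\oplus M^{0,-}$ is a $\Z_2$-graded simple current extension, part (1) follows from the fact that fusion with the simple current $M^{0,-}$ swaps the two $M^{0,+}$-components of each $\theta$-stable irreducible $M^0$-module, and your computation of $M^{2,1}\boxtimes_{M^0}M^{2j,j}$ from \eqref{eq:paraf_fusion} correctly yields the three, two, or one summands $M^{2r,r}$ in the respective cases. (One bookkeeping slip: the restriction of $M^{2,1}\boxtimes_{M^0}M^{2j,j}$ to $M^{0,+}$ equals the sum of the two products $(M^{2,1})^0\boxtimes_{M^{0,+}}(M^{2j,j})^{\ep_2}$ for $\ep_2=0,1$, not of all four products; the four come in two equal pairs, so your count is off by a factor of $2$, though this does not affect the structure of the reduction.)

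The genuine gap is exactly where you locate "the main obstacle." Everything you establish shows only that $(M^{2,1})^0\boxtimes_{M^{0,+}}(M^{2j,j})^0$ contains exactly one of $(M^{2r,r})^0$, $(M^{2r,r})^1$ for each $r\in R(2,2j)$; deciding which one occurs is the entire content of the theorem beyond the simple-current formalism, and your proposal defers it to an unspecified "intertwining operator computation" or "the Verlinde formula applied to the modular $S$-matrix of $M^{0,+}$" without carrying either out. The closing heuristic --- that the integer gap $h((M^{2j,j})^1)-h((M^{2j,j})^0)$ "controls which $\theta$-eigencomponent occurs" --- is not an argument: the conformal weights of the factors do not by themselves determine which candidate summands of a fusion product are nonzero, and a lowest-weight-vector approach would still require proving that the relevant intertwining-operator coefficients do not vanish. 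The missing step is precisely the computation performed in \cite{JW2019} (using the explicit highest-weight vectors of \cite{JW2017} and the quantum-dimension and $S$-matrix machinery for the orbifold $M^{0,+}$), so as written the proposal does not constitute an independent proof of statements (2) and (3).
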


\begin{remark}\label{rem:fusion_alg_generator}
Since the fusion product of irreducible $M^{0,+}$-modules 
is commutative and associative \cite[Theorem 3.7]{Huang2005}, 
the above equations \eqref{eq:fusion-1}, \eqref{eq:fusion-2}, \eqref{fusion-3_o}, 
and \eqref{fusion-3_e} determine the fusion product among $(M^{2j,j})^{\ep}$ 
for all $0 \le j \le \lfloor k/2 \rfloor$ and $\ep \in \{ 0,1 \}$. 
\end{remark}

In the case $k = 2$, the automorphism $\theta$ is trivial, and the $\sigma$-type 
irreducible modules for $K(\mathfrak{sl}_2,2) \cong L(1/2,0)$ are $M^0$ and 
$M^{2,1} \cong L(1/2,1/2)$. 
Thus \eqref{fusion-3_e} corresponds to the fusion product 
$L(1/2,1/2) \boxtimes_{M^0} L(1/2,1/2) = L(1/2,0)$. 

The next theorem follows from Theorem \ref{thm:fusion_M2jj01} and 
Remark \ref{rem:fusion_alg_generator}.

\begin{theorem}\label{thm:Z2_sym}
Let $k \ge 3$ be an integer. Then a map defined by
\[
  (M^{2j,j})^{\ep} \mapsto (-1)^{j+\ep}(M^{2j,j})^{\ep} \quad \text{for} \ 
  0 \le j \le \lfloor k/2 \rfloor \ \text{and} \ \ep \in \{ 0,1 \}
\] 
is compatible with 
\eqref{eq:fusion-1}, \eqref{eq:fusion-2}, \eqref{fusion-3_o}, 
and \eqref{fusion-3_e}, 
and it induces an automorphism of order $2$ of the subalgebra 
of the fusion algebra of $M^{0,+}$ spanned by 
$(M^{2j,j})^{\ep}$ for $0 \le j \le \lfloor k/2 \rfloor$ and $\ep \in \{ 0,1 \}$, 
where $M^{0,+} = K(\mathfrak{sl}_2,k)^{\la \theta \ra}$.
\end{theorem}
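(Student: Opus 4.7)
The plan is to exhibit the assignment $(M^{2j,j})^\ep \mapsto (-1)^{j+\ep}(M^{2j,j})^\ep$ as a character on the free abelian group generated by the symbols $(M^{2j,j})^\ep$, and then verify that this character respects every fusion rule in the distinguished subalgebra. Because the fusion product on irreducible $M^{0,+}$-modules is commutative and associative by \cite[Theorem 3.7]{Huang2005}, and because \eqref{eq:fusion-1}, \eqref{eq:fusion-2}, \eqref{fusion-3_o}, and \eqref{fusion-3_e} determine every fusion product among the $(M^{2j,j})^\ep$ (Remark \ref{rem:fusion_alg_generator}), it suffices to check compatibility on these four families of relations. The resulting algebra map clearly has order $2$ since $((-1)^{j+\ep})^2 = 1$ and since it is not the identity (for instance $(M^{0,0})^1$ is sent to $-(M^{0,0})^1$).

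Concretely, I would check the four families in turn. For \eqref{eq:fusion-1} the sign on the left is $(-1)^{\ep_1}\cdot(-1)^{j+\ep_2}$, while the single summand on the right carries sign $(-1)^{j+\ep_1+\ep_2}$; these match since $\ep_1+\ep_2$ is read modulo $2$. For \eqref{eq:fusion-2} the left-hand sign is $(-1)^{1+0}\cdot(-1)^{j+0}=(-1)^{j+1}$, and the three summands on the right carry signs $(-1)^{(j-1)+0}$, $(-1)^{j+1}$, and $(-1)^{(j+1)+0}$, all equal to $(-1)^{j+1}$. For \eqref{fusion-3_o} the same computation gives $(-1)^{j+1}$ on both the left and each of the two summands on the right. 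Finally, for \eqref{fusion-3_e} the left-hand sign is $(-1)^{1+k/2}$ and the right-hand sign is $(-1)^{(k/2-1)+0}$, and these agree.

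There is essentially no obstacle here: the verification is a bookkeeping exercise once Theorem \ref{thm:fusion_M2jj01} and Remark \ref{rem:fusion_alg_generator} are in hand. The only mildly delicate point is making clear why checking these four families is enough, which rests on the fact that the products $(M^{2,1})^0\boxtimes_{M^{0,+}}(-)$ together with $(M^{0,0})^\ep\boxtimes_{M^{0,+}}(-)$ generate the multiplication table in the subalgebra; once commutativity and associativity are invoked, every product can be reduced to these building blocks, so a character defined on the generators that respects the displayed relations extends uniquely and consistently to an algebra automorphism. After this is noted, the sign check above completes the proof.
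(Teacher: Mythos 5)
Your proposal is correct and takes essentially the same approach as the paper: the paper states that the theorem "follows from Theorem \ref{thm:fusion_M2jj01} and Remark \ref{rem:fusion_alg_generator}," and your sign verifications on \eqref{eq:fusion-1}, \eqref{eq:fusion-2}, \eqref{fusion-3_o}, and \eqref{fusion-3_e}, together with the observation that these relations generate the multiplication table via commutativity and associativity, are exactly the intended (but unwritten) computation. All four sign checks are accurate.
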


Let $V$ be a vertex operator algebra containing 
a vertex operator subalgebra $W \cong K(\mathfrak{sl}_2,k)$. 
Suppose $V^{\la \tau_W \ra} = V$, 
that is, $V$ is of $\sigma$-type as a $W$-module. 
Denote by $V_{W^+}[j,\ep]$ the sum of all irreducible 
$M^{0,+}$-submodules of $V$ isomorphic to $(M^{2j,j})^{\ep}$. 
Then 
\begin{equation}\label{eq:dec_Vtau}
  V = \bigoplus_{j=0}^{\lfloor k/2 \rfloor} \bigoplus_{\ep \in \{ 0,1 \}} 
  V_{W^+}[j,\ep].
\end{equation}

The following theorem is a consequence of Theorem \ref{thm:Z2_sym} 
and \eqref{eq:dec_Vtau}. 

\begin{theorem}\label{thm:sigma-inv}
Let $V$ be a vertex operator algebra containing 
a vertex operator subalgebra $W \cong K(\mathfrak{sl}_2,k)$ 
for an integer $k \ge 3$. 
Assume that $V$ is of $\sigma$-type as a $W$-module. 
Then a linear map $\sigma_W$ on $V$ defined by 
\[
  \sigma_W(v) = (-1)^{j+\ep}v \quad \text{for} \ 
  v \in V_{W^+}[j,\ep],  0 \le j \le \lfloor k/2 \rfloor, \ep \in \{ 0,1 \}
\]
is an automorphism of the vertex operator algebra $V$ of order $2$.  
\end{theorem}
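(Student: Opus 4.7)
The plan is to verify that the linear map $\sigma_W$, which is well-defined by the decomposition \eqref{eq:dec_Vtau}, satisfies the axioms of a vertex operator algebra automorphism, and then to confirm that its order is exactly $2$. The key structural input is the fusion algebra symmetry of Theorem \ref{thm:Z2_sym}.

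I would first check that $\sigma_W$ fixes the vacuum $\1$ and the conformal vector $\omega_V$. Since $\1 \in M^{0,+} = (M^{0,0})^{0} \subset V_{W^+}[0,0]$, we have $\sigma_W(\1) = \1$. The Virasoro vector $\omega_W$ of $W$ has weight $2$, and because $M^{0,-}$ is generated by the weight-$3$ primary vector $W^3$ and hence has minimal weight $3$, we find $\omega_W \in (M^{0,0})^{0} \subset V_{W^+}[0,0]$. The difference $\omega_V - \omega_W$ lies in the commutant $\Com(W,V)$, so it is $W$-invariant and sits inside $V_{W^+}[0,0]$; therefore $\sigma_W(\omega_V) = \omega_V$.

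The crucial step is compatibility with vertex operators. For $u \in V_{W^+}[j_1,\ep_1]$ and $v \in V_{W^+}[j_2,\ep_2]$, the projection of $Y(u,x)v$ onto a summand $V_{W^+}[j,\ep]$ of \eqref{eq:dec_Vtau} arises from an $M^{0,+}$-intertwining operator sending $(M^{2j_1,j_1})^{\ep_1}$ and $(M^{2j_2,j_2})^{\ep_2}$ into $(M^{2j,j})^{\ep}$. By the definition of the fusion product, such an operator vanishes unless $(M^{2j,j})^{\ep}$ appears as a summand of $(M^{2j_1,j_1})^{\ep_1} \boxtimes_{M^{0,+}} (M^{2j_2,j_2})^{\ep_2}$. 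By Theorem \ref{thm:Z2_sym}, for every component that does appear, $(-1)^{j+\ep} = (-1)^{j_1+\ep_1}(-1)^{j_2+\ep_2}$. Consequently
\[
  \sigma_W\bigl(Y(u,x)v\bigr) = (-1)^{j_1+\ep_1+j_2+\ep_2}\, Y(u,x)v = Y(\sigma_W u, x)\sigma_W v,
\]
which together with the fixing of $\1$ and $\omega_V$ shows that $\sigma_W \in \Aut(V)$.

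Finally, $\sigma_W^{2} = \id_V$ is immediate from the definition, and $\sigma_W$ is nontrivial because $W^3 \in M^{0,-} = (M^{0,0})^{1} \subset V_{W^+}[0,1]$ satisfies $\sigma_W(W^3) = -W^3$; hence the order of $\sigma_W$ is exactly $2$. The main obstacle is bridging the abstract fusion algebra symmetry of Theorem \ref{thm:Z2_sym} with honest vertex operator identities, but this reduces to the standard dictionary between fusion rules and intertwining operators for a rational $C_2$-cofinite vertex operator subalgebra such as $M^{0,+}$, so no further technical work is needed beyond what is already packaged in Theorems \ref{thm:fusion_M2jj01} and \ref{thm:Z2_sym}.
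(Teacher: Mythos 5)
Your proposal is correct and follows essentially the same route as the paper, which simply states the theorem as a consequence of Theorem \ref{thm:Z2_sym} and the isotypic decomposition \eqref{eq:dec_Vtau} without spelling out the details. Your write-up supplies exactly the details the paper leaves implicit (fixing of $\1$ and $\omega_V$, the intertwining-operator translation of the fusion-algebra sign rule, and nontriviality via $W^3$), so there is nothing further to add.
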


We call $\sigma_W$ the $\sigma$-involution of $V$ associated with $W$. 
We say $W \cong K(\mathfrak{sl}_2,k)$ 
is a $\sigma$-type parafermion vertex operator subalgebra of $V$ 
or of $\sigma$-type in $V$
if $V$ is of $\sigma$-type as a $W$-module.

\section{$\sigma$-involution of $V_{\sqrt{2}A_{k-1}}$}\label{sec:s_VN}

In this section, we study $\sigma$-involutions of 
the lattice vertex operator algebra $V_{\sqrt{2}A_{k-1}}$ for an integer $k \ge 3$. 
Furthermore, we obtain a sufficient condition on a positive definite even lattice $L$ 
containing $\sqrt{2}A_{k-1}$ for which $V_L$ possesses a $\sigma$-involution, 
and show how a $\sigma$-involution is related to an RSSD involution. 

We use the notation in Sections 3, 4, and 5 of \cite{AYY2019}.
Let $L^{(k)} = \Z\alpha_1 + \cdots + \Z\alpha_k$ 
with $\la \alpha_i, \alpha_j \ra = 2 \delta_{i, j}$, and set 
$\gamma_k = \alpha_1 + \cdots + \alpha_k$. Let
\[
  N = \{\al \in L^{(k)} \mid \la \al, \gamma_k \ra = 0 \}.
\]
Then $N \cong \sqrt{2}A_{k-1}$. 
Let $\beta_i = \alpha_i - \alpha_{i+1}$,  
so $\{\beta_1, \ldots, \beta_{k-1}\}$ is a $\Z$-basis of $N$. 

Since $\la \alpha, \alpha \ra \in 4\Z$ for $\alpha \in N$, 
we may take a $\Z$-bilinear map $\vep : N \times N \to \Z_2$ satisfying 
\eqref{eq:cond_vep} for $N$ in place of $L$ to be $\vep = 0$. 
Then the central extension \eqref{eq:ext_L} of $N$ in place of $L$ splits. 
So the twisted group algebra $\C[N]_\vep$ 
is isomorphic to the ordinary group algebra $\C[N]$,  
and $V_N = M(1) \otimes \C[N]$ as a vector space. 
Moreover, we may take a quadratic form $\eta$ satisfying \eqref{eq:def-eta} 
for $N$ to be $\eta = 0$, so that
$\Aut(V_N) = N(V_N) : O(N)$ is a split extension of $N(V_N)$ by $O(N)$. 

The vertex operator algebra $V_N$ contains a subalgebra 
\[
  T \cong L(c_1,0) \otimes \cdots \otimes L(c_{k-1},0) \otimes M^0, 
\]
where $L(c_m,0)$ is a simple Virasoro vertex operator algebra of central charge
\[
  c_m = 1 - \frac{6}{(m+2)(m+3)},
\]
and $M^0 = K(\mathfrak{sl}_2,k)$. 
In fact, $M^0$ is the commutant of 
$L(c_1,0) \otimes \cdots \otimes L(c_{k-1},0)$ in $V_N$. 
The lift $\theta \in \Aut(V_N)$ of the $-1$-isometry of $N$ leaves $M^0$ invariant, 
and its restriction to $M^0$ generates $\Aut (M^0)$. 

The vertex operator algebra $V_N$ decomposes into a direct sum 
of irreducible $T$-modules. 
\begin{equation*}
  V_N = 
  \bigoplus_{\substack{
  0 \le i_s \le s\\
  i_s \equiv 0 \pmod{2}\\
  1 \le s \le k}}
  L(c_1, h^1_{i_1+1, i_2+1}) \otimes \cdots \otimes 
  L(c_{k-1},h^{k-1}_{i_{k-1}+1, i_{k}+1}) \otimes M^{i_{k}, i_{k}/2},
\end{equation*}
where
\begin{equation*}
  h^m_{r,s} = \frac{\big( r(m+3) - s(m+2) \big)^2 - 1}{4(m+2)(m+3)}
\end{equation*}
for $1 \le r \le m+1$ and $1 \le s \le m+2$, and $L(c,h)$ is an irreducible 
highest weight $L(c,0)$-module with the highest weight $h$, 
see \cite{Wang1993} and \cite[Theorem 5.2]{AYY2019}. 
Since $M^{i_{k}, i_{k}/2}$ is a $\sigma$-type irreducible $M^0$-module, 
the next lemma holds. 

\begin{lemma}\label{lem:VN_sigma}
$V_N$ is of $\sigma$-type as an $M^0$-module.
\end{lemma}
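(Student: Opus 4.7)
The plan is to read the conclusion directly off the explicit $T$-module decomposition of $V_N$ displayed immediately before the lemma, and then compare with the list of $\sigma$-type irreducible $M^0$-modules from Section~\ref{subsec:paraf_VOA}. Recall that by definition an $M^0$-module is of $\sigma$-type if it is a direct sum of irreducible $M^0$-modules of the form $M^{2j,j}$ for $0 \le j \le \lfloor k/2 \rfloor$, so it is enough to verify that every irreducible $M^0$-constituent of $V_N$ has this shape.

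First I would note that each irreducible $T$-summand in the cited decomposition has the form
\[
  L(c_1, h^1_{i_1+1, i_2+1}) \otimes \cdots \otimes L(c_{k-1}, h^{k-1}_{i_{k-1}+1, i_{k}+1}) \otimes M^{i_k, i_k/2}
\]
with $i_k$ even and $0 \le i_k \le k$. Since $M^0$ acts only on the last tensor factor, the restriction of such a summand to $M^0$ is a direct sum of copies of $M^{i_k, i_k/2}$. Setting $j = i_k/2$, this is exactly $M^{2j,j}$ for some integer $j \in \{0, 1, \ldots, \lfloor k/2 \rfloor\}$, which by Section~\ref{subsec:paraf_VOA} is a $\sigma$-type irreducible $M^0$-module. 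Summing over all $T$-summands then exhibits $V_N$ as a direct sum of $\sigma$-type irreducible $M^0$-modules, proving the lemma.

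There is essentially no obstacle beyond unpacking the notation: the only point requiring care is the parity condition $i_k \equiv 0 \pmod{2}$ in the indexing of the $T$-decomposition, which is precisely what guarantees that $i_k/2$ is an integer and hence that the $M^0$-factor in each summand corresponds to a valid index $j$. The nontrivial input has already been quoted, namely the $T$-decomposition from \cite{Wang1993} and \cite[Theorem 5.2]{AYY2019} combined with the identification of $\sigma$-type modules as $M^{2j,j}$.
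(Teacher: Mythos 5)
Your proposal is correct and is essentially the paper's own argument: the lemma is read off directly from the displayed $T$-module decomposition of $V_N$, noting that every $M^0$-constituent is $M^{i_k, i_k/2}$ with $i_k$ even, hence of the form $M^{2j,j}$ and therefore of $\sigma$-type. The paper states this in a single sentence preceding the lemma; your only addition is the explicit bookkeeping of the parity condition, which is a fair point to make.
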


By the above lemma, we can consider the $\sigma$-involution $\sigma_{M^0}$ 
of $V_N$ associated with $M^0$ as in Theorem \ref{thm:sigma-inv}. 
The next lemma will be used to relate the $\sigma$-involution $\sigma_{M^0}$ 
and the involution $\theta$. 

\begin{lemma}\label{lem:wt1_VN}
\textup{(1)} 
The weight one subspace $(V_N)_1$ of $V_N$ agrees with the 
weight one subspace $M(1)_1$ of $M(1)$,  
which is $k-1$ dimensional. 

\textup{(2)}
$(V_N)_1$ is spanned by the top levels of 
\begin{equation}\label{eq:ds-1}
  L(c_1,0) \otimes \cdots \otimes L(c_{p-1},0) \otimes L(c_p, h^p_{1,3}) 
  \otimes L(c_{p+1},h^{p+1}_{3,3}) \otimes \cdots 
  \otimes L(c_{k-1}, h^{k-1}_{3,3}) \otimes (M^{2,1})^0
\end{equation}
for $1 \le p \le k-1$. 
\end{lemma}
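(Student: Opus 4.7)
The plan is to prove (1) directly from the lattice structure and then derive (2) from (1) by exhibiting $k-1$ explicit weight-one vectors and invoking the dimension count.

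For (1), I would use the vector-space description $V_N = M(1) \otimes \C[N]$ with weight grading making $u \otimes e^\alpha$ have weight $\wt(u) + \la\alpha,\alpha\ra/2$. Hence
\[
  (V_N)_1 = M(1)_1 \otimes \C e^0 \;\oplus\; \bigoplus_{\alpha \in N(2)} M(1)_0 \otimes \C e^\alpha.
\]
Since $N \cong \sqrt{2}A_{k-1}$ satisfies $\la\alpha,\alpha\ra \in 4\Z$ for every $\alpha \in N$, the set $N(2)$ is empty, so $(V_N)_1 = M(1)_1$, which has dimension $\rank N = k-1$.

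For (2), I would fix $1 \le p \le k-1$ and locate the listed summand inside the $M^{0,+}$-refinement of the $T$-decomposition recalled just before the lemma. It is the refined summand coming from the admissible index tuple $(i_1,\ldots,i_k)$ with $i_s = 0$ for $s \le p$ and $i_s = 2$ for $s > p$, together with the further splitting $M^{2,1} = (M^{2,1})^0 \oplus (M^{2,1})^1$ singling out the piece ending in $(M^{2,1})^0$. A direct telescoping computation using $h^p_{1,3} = (p+1)/(p+3)$, the partial-fraction identity $h^m_{3,3} = 2/((m+2)(m+3)) = 2/(m+2) - 2/(m+3)$, and $h((M^{2,1})^0) = h(M^{2,1}) = 2/(k+2)$ gives
\[
  h^p_{1,3} + \sum_{m=p+1}^{k-1} h^m_{3,3} + h((M^{2,1})^0)
  = \frac{p+1}{p+3} + \frac{2}{p+3} - \frac{2}{k+2} + \frac{2}{k+2} = 1,
\]
so the top level of the listed summand lies in $(V_N)_1$. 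It is one-dimensional: the Virasoro factors contribute vacuum or highest-weight lines, and the top level of $(M^{2,1})^0$ coincides with the top level of the irreducible $M^0$-module $M^{2,1}$, which is one-dimensional.

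The $k-1$ summands indexed by distinct $p$ are pairwise non-isomorphic as $T$-modules and therefore occupy distinct $T$-isotypic components of $V_N$, so their top-level vectors are linearly independent. Combined with (1), these $k-1$ vectors form a basis of $(V_N)_1$, which proves (2). The only real computational step is the telescoping; the dimension count from (1) obviates any need to check that no other refined $T$-summand of conformal weight strictly less than $1$ carries a weight-one descendant vector.
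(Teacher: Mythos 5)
Your proposal is correct and follows essentially the same route as the paper: part (1) from $N(2)=\varnothing$ and $\rank N = k-1$, and part (2) from the same index tuple $i_1=\cdots=i_p=0$, $i_{p+1}=\cdots=i_k=2$ and the same telescoping weight computation giving total weight $k/(k+2)+2/(k+2)=1$. The only difference is that you spell out the final spanning step (linear independence via distinct $T$-isotypic components plus the dimension count from (1)), which the paper leaves implicit.
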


\begin{proof} 
Since $N$ is a rank $k-1$ lattice with minimal square norm $4$, 
the assertion (1) holds. 
Let $1 \le p \le k-1$, and set $i_1 = \cdots = i_p = 0$ and 
$i_{p+1} = \cdots = i_k = 2$. 
Then $h^m_{1,1} = 0$ for $1 \le m \le p-1$, 
$h^p_{1,3} = (p+1)/(p+3)$, and $h^m_{3,3} = 2/(m+2)(m+3)$ for 
$p+1 \le m \le k-1$. Hence we have
\[ 
  h^p_{1,3} + h^{p+1}_{3,3} + \cdots + h^{k-1}_{3,3} = \frac{k}{k+2}.
\]

The top level of $(M^{2,1})^0$ agrees with the top level of $M^{2,1}$, 
and it is one dimentional with weight $2/(k+2)$. 
Therefore, the top level of \eqref{eq:ds-1} 
is one dimentional with weight one for any $1 \le p \le k-1$. 
Thus the assertion (2)  holds.
\end{proof}

By a similar argument as in the proof of \cite[Lemma 5.4]{LY2014}, 
we obtain the next theorem. 

\begin{theorem}\label{thm:theta-sigma}
Let $k \ge 3$ be an integer. Then 
$\sigma_{M^0} = \theta$ as automorphisms of $V_N$, 
where $N = \sqrt{2}A_{k-1}$ and $\theta$ is the lift of the $-1$-isometry of $N$. 
\end{theorem}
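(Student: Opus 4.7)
The plan is to show $\sigma_{M^0}=\theta$ by proving that the composition $\rho=\sigma_{M^0}\theta$ acts as the identity on $V_N$. First, I would check that $\rho$ is trivial on $M^0$: applying the definition of $\sigma_{M^0}$ with $j=0$ gives scalar $(-1)^{0+0}=1$ on $V_{M^{0,+}}[0,0]=M^{0,+}$ and $(-1)^{0+1}=-1$ on $V_{M^{0,+}}[0,1]=M^{0,-}$, so $\sigma_{M^0}|_{M^0}=\theta|_{M^0}$. Second, $\rho$ is trivial on $(V_N)_1=\h$: by Lemma~\ref{lem:wt1_VN}(2) one has $(V_N)_1\subset V_{M^{0,+}}[1,0]$, so $\sigma_{M^0}$ acts there by $(-1)^{1+0}=-1$, matching $\theta=-1$ on $\h$ from its construction as the lift of the $-1$-isometry of $N$.

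Next, using the $T$-module decomposition of $V_N$ recalled just before Lemma~\ref{lem:VN_sigma}, refined to an irreducible $W\otimes M^{0,+}$-module decomposition by splitting each $M^{2j,j}=(M^{2j,j})^0\oplus(M^{2j,j})^1$, I would show that both $\sigma_{M^0}$ and $\theta$ fix $W=L(c_1,0)\otimes\cdots\otimes L(c_{k-1},0)$ pointwise. For $\sigma_{M^0}$, this is because $W$ lies inside the $(i_1,\ldots,i_k)=(0,\ldots,0)$-summand $W\otimes M^0$ of the $T$-decomposition, hence $W\subset V_{M^{0,+}}[0,0]$; for $\theta$, the Virasoro vectors generating each factor $L(c_m,0)$ are $\theta$-invariant by their explicit construction as quadratic expressions in the lattice data. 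Consequently both involutions commute with the $W\otimes M^{0,+}$-action on $V_N$, and by Schur's lemma each acts as a $\pm1$ scalar on every irreducible $W\otimes M^{0,+}$-summand of $V_N$.

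Finally, I would identify these scalars inductively. On a summand of type $L(c_1,h^1)\otimes\cdots\otimes L(c_{k-1},h^{k-1})\otimes(M^{2j,j})^\ep$, the automorphism $\sigma_{M^0}$ acts as $(-1)^{j+\ep}$ by definition; I claim $\theta$ acts by the same scalar. The base cases $(j,\ep)\in\{(0,0),(0,1),(1,0)\}$ follow from the first paragraph. For the inductive step, pick $u\in V_{M^{0,+}}[1,0]$ and $v\in V_{M^{0,+}}[j,0]$ such that $u_{(n)}v$, by the fusion rule \eqref{eq:fusion-2}, decomposes into nonzero components in $V_{M^{0,+}}[j-1,0]$, $V_{M^{0,+}}[j,1]$, and $V_{M^{0,+}}[j+1,0]$; the identity $\theta(u_{(n)}v)=\theta(u)_{(n)}\theta(v)$ forces the $\theta$-scalars on the latter two components to equal $(-1)\cdot(-1)^j=(-1)^{j+1}$, matching $\sigma_{M^0}$. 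The boundary fusions \eqref{fusion-3_o} and \eqref{fusion-3_e} cover $j=\lfloor k/2\rfloor$, and the $\ep=1$ case follows from the $\ep=0$ case via $(M^{0,0})^1\boxtimes_{M^{0,+}}(M^{2j,j})^0=(M^{2j,j})^1$ from \eqref{eq:fusion-1}.

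The main technical hurdle is to verify that, for specific $u,v$ inside $V_N$ (not merely in the abstract fusion algebra), the mode products $u_{(n)}v$ actually have nonzero components in each of the three fusion summands, so that the scalar equalities above are genuinely forced. This can be addressed by exhibiting explicit vectors: take $u$ among the weight-one generators identified in Lemma~\ref{lem:wt1_VN}(2), and $v$ in the top level of a suitably chosen $T$-summand with $i_k=2j$, then confirm using the multiplicity data visible from the $T$-decomposition of $V_N$ that each fusion component is realized with nontrivial contribution.
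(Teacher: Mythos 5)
Your first step coincides with the paper's: both you and the paper use Lemma \ref{lem:wt1_VN} to see that $\sigma_{M^0}$ and $\theta$ each act as $-1$ on $(V_N)_1 = M(1)_1$, so that $\sigma_{M^0}\theta$ is trivial there. But from that point the paper takes a short structural route that your proposal misses: since $N(2)=\varnothing$, an automorphism of $V_N$ that is the identity on $M(1)_1$ must be of the form $\exp(\beta(0))$ for some $\beta \in \C\otimes_\Z N$ (this is \cite[Lemma 2.5]{DN1999}), and then one only has to evaluate $\exp(\beta(0))$ on the conformal vector of $M^0$, which contains the terms $e^{\alpha_p-\alpha_q}$. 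Since both $\sigma_{M^0}$ and $\theta$ fix that conformal vector, $\exp(\la\beta,\alpha_p-\alpha_q\ra)=1$ for all $p\ne q$, and these differences span $N$, so $\exp(\beta(0))=1$. The whole proof is three lines once the DN lemma is invoked.

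Your alternative — decomposing $V_N$ into irreducible $W\otimes M^{0,+}$-summands, noting both involutions act by $\pm1$ scalars on each, and propagating the scalar by induction through the fusion rules — is a reasonable strategy in outline, but it has a genuine unresolved gap exactly where you flag it. Knowing the abstract fusion rule \eqref{eq:fusion-2} does not tell you that for your chosen $u\in(V_N)_1$ and $v$ in a given summand the component of $u_{(n)}v$ in $V_{M^{0,+}}[j+1,0]$ is nonzero, nor that it has nonzero projection onto \emph{every} irreducible summand of type $[j+1,0]$ — and there are many such summands, one for each admissible tuple $(i_1,\ldots,i_{k-1})$ with $i_k=2(j+1)$, each carrying its own Virasoro weights that must also be matched by the fusion on the $L(c_m,0)$ side. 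A single nonvanishing component would only pin down the $\theta$-scalar on one summand, so the induction as written does not reach all of $V_{M^{0,+}}[j+1,0]$. Closing this would require explicit nonvanishing computations of intertwining-operator coefficients inside $V_N$, which is precisely the work the paper's appeal to \cite[Lemma 2.5]{DN1999} and the explicit conformal vector of $M^0$ is designed to avoid. I recommend replacing your third and fourth paragraphs with that argument.
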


\begin{proof}
We see from Theorem \ref{thm:sigma-inv} and Lemma \ref{lem:wt1_VN} 
that $\sigma_{M^0}$ acts as $-1$ on $(V_N)_1 = M(1)_1$. 
The automorphism $\theta$ also acts as $-1$ on $M(1)_1$. 
Thus $\sigma_{M^0} \theta$ is the identity on $M(1)_1$,  
so $\sigma_{M^0} \theta = \exp(\beta(0))$ 
for some $\beta \in \C \otimes_\Z N$ by \cite[Lemma 2.5]{DN1999}. 
The confomal vector of $M^0$ is described in (4.4) of \cite{DLY2009} as
\[
  \frac{1}{4k(k+2)} \sum_{\substack{1 \le p,q \le k\\p \ne q}} 
  (\alpha_p - \alpha_q)(-1)^2\1 
  + \frac{1}{k+2} \sum_{\substack{1 \le p,q \le k\\p \ne q}} 
 e^{\alpha_p - \alpha_q}. 
\]
Note that $\exp(\beta(0))$ fixes $(\alpha_p - \alpha_q)(-1)^2\1$ and 
multiplies $e^{\alpha_p - \alpha_q}$ by $\exp(\la \beta, \alpha_p - \alpha_q\ra)$. 
Since both $\sigma_{M^0}$ and $\theta$ fix the confomal vector of $M^0$, 
we have that $\exp(\la \beta, \alpha_p - \alpha_q\ra) = 1$ for all 
$1 \le p,q \le k$, $p \ne q$. 
Thus $\exp(\beta(0)) = 1$, and $\sigma_{M^0} = \theta$. 
\end{proof}

We discuss a condition on a rational lattice $L$ such that 
$N \subset L \subset N^*$ for which the $V_N$-module $V_L$ 
is of $\sigma$-type as an $M^0$-module. 
Let
\begin{equation}\label{eq:def_lambda_p}
  \lambda_i = \frac{1}{2k} \gamma_k - \frac{1}{2} \alpha_i,
  \quad 1 \le i \le k.
\end{equation}
Then $2\lambda_i  \equiv  2\lambda_k \pmod{N}$, 
$\lambda_1 + \cdots + \lambda_k = 0$, and 
$\{ \lambda_2, \ldots, \lambda_k \}$ is a $\Z$-basis of $N^*$ 
\cite[Lemma 4.2]{AYY2019}. 
Let
\begin{equation}\label{def:N_j_a}
  N(j,\bsa) = N - \sum_{i = 1}^k a_i \lm_i + 2 j \lm_k
\end{equation}
for $0 \le j < k$ and $\bsa = (a_1,\ldots,a_k) \in \{ 0,1\}^k$. 
Since $2k \lambda_k \in N$, we may consider $j$ to be modulo $k$. 
Any coset of $N$ in $N^*$ is of the form $N(j,\bsa)$ 
for some $j$ and $\bsa$. However, $j$ and $\bsa$ are not uniquely 
determined, see \cite[Lemma 4.3]{AYY2019}. 

The irreducible $V_N$-module $V_{N(j,\bsa)}$ decomposes into 
a direct sum of irreducible $T$-modules. 
\begin{equation*}
  V_{N(j,\bsa)} = 
  \bigoplus_{\substack{
  0 \le i_s \le s\\
  i_s \equiv b_s \pmod{2}\\
  1 \le s \le k}}
  L(c_1, h^1_{i_1+1, i_2+1}) \otimes \cdots \otimes L(c_{k-1},h^{k-1}_{i_{k-1}+1, i_{k}+1})
  \otimes M^{i_{k}, j + (i_{k} - b_{k})/2},
\end{equation*}
where $b_s = \sum_{i=1}^s a_i$ \cite[Theorem 5.2]{AYY2019}. 
We denote by $\wt(\bsa)$ the number of nonzero entries $a_i$ of 
$\bsa = (a_1,\ldots,a_k)$. 
Then $b_k = \wt(\bsa)$,  
and the irreducible $M^0$-module $M^{i_{k}, j + (i_{k} - b_{k})/2}$ 
is of $\sigma$-type 
if and only if $i_k$ is even and $2j = \wt(\bsa)$. 
In fact, $M^{i_{k}, j + (i_{k} - b_{k})/2} = \widetilde{M}^{i_k, b_k-2j}$
in the notation of Section \ref{subsec:aut-tW}. 
Thus $V_{N(j,\bsa)}$ is of $\sigma$-type as an $M^0$-module 
if and only if $2j = \wt(\bsa)$. 

Let $X = \frac{1}{2}N$. 
Then $N^*/X \cong \Z_k$. 
For $N \subset L \subset N^*$,  
we have $2L \subset N$ if and only if $L \subset X$. 
We show that $X$ is the union of $N(j,\bsa)$ for all $0 \le j < k$ and 
$\bsa \in \{ 0,1\}^k$ such that $2j = \wt(\bsa)$. 
Indeed, let $0 \le j < k$ and $\bsa = (a_1, \ldots, a_k) \in \{ 0,1\}^k$ 
be such that $2j = \wt(\bsa)$. 
Take $1 \le i_1 < i_2 < \cdots < i_{2j} \le k$ so that $a_i = 1$ if 
$i \in \{ i_1, \ldots, i_{2j} \}$. 
For $1 \le r < s \le k$, we have 
\begin{align*}
  2\lambda_k - \lambda_r - \lambda_s 
  &= \frac{1}{2}(\alpha_r - \alpha_k) + \frac{1}{2}(\alpha_s - \alpha_k)\\
  &\equiv \frac{1}{2}(\alpha_r - \alpha_s) \pmod{N}
\end{align*}
by \eqref{eq:def_lambda_p} as $\alpha_s - \alpha_k \in N$. 
Since $\alpha_r - \alpha_s = \beta_r + \cdots + \beta_{s-1}$, 
it follows that
\begin{align*}
  2j\lambda_k - \sum_{i=1}^k a_i \lambda_i 
  &= \sum_{p=1}^j (2\lambda_k - \lambda_{i_{2p-1}} - \lambda_{i_{2p}})\\
  &\equiv \sum_{p=1}^j 
  \frac{1}{2} (\beta_{i_{2p-1}} + \beta_{i_{2p-1}+1} + \cdots + \beta_{i_{2p}-1}) \pmod{N}. 
\end{align*}
Hence $N(j,\bsa) \subset X$. 

Next, we need to show that any element of $X$ belongs to $N(j,\bsa)$ 
for some $0 \le j < k$ and $\bsa \in \{ 0,1 \}^k$ such that $2j = \wt(\bsa)$. 
For $\bsa = (a_1,\ldots,a_k)$ with $a_r = a_{r+1} = 1$ and 
$a_i = 0$, $i \ne r, r+1$, we have $\beta_r/2 \in N(1,\bsa)$.  
In this case, $N(1,\bsa)$ satisfies the condition $2j = \wt(\bsa)$ with $j = 1$. 
As for the sum of cosets of $N$ in $N^*$, we have
\begin{equation*}
  N(j, \bsa) + N(j', \bsa') 
  = N(j + j' - (\wt(\bsa) + \wt(\bsa') - \wt(\bsa + \bsa'))/2, \bsa + \bsa'),
\end{equation*}
where $\bsa + \bsa'$ is the sum of $\bsa$ and $\bsa'$ as elements of $(\Z_2)^k$, 
that is, the symmetric difference as subsets of $\{0,1\}^k$, 
see Section 4 of \cite{AYY2019}. 
Let
\[
  j'' = j + j' - (\wt(\bsa) + \wt(\bsa') - \wt(\bsa + \bsa'))/2, 
\]
and let $\bsa'' = \bsa + \bsa'$. 
If $2j = \wt(\bsa)$ and $2j' = \wt(\bsa')$, 
then $2j'' = \wt(\bsa'')$. 
Thus for any $d_1, \ldots, d_{k-1} \in \{ 0,1 \}$, 
$d_1\beta_1/2 + \cdots + d_{k-1}\beta_{k-1}/2$ belongs to some $N(j,\bsa)$ 
such that $2j = \wt(\bsa)$ as desired. 

By the above argument, we obtain the following proposition. 

\begin{proposition}\label{prop:s-type_cond}
Let $L$ be a rational lattice 
such that $N \subset L \subset N^*$, 
where $N = \sqrt{2}A_{k-1}$ with $k \ge 3$ an integer.  
Then the $V_N$-module $V_L$ is a $\sigma$-type $M^0$-module 
if and only if $2L \subset N$.
\end{proposition}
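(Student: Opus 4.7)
The plan is to reduce the problem to a coset-by-coset analysis. Since $N \subset L \subset N^*$, the lattice $L$ decomposes as a disjoint union of cosets $N(j,\bsa)$ of $N$ in $N^*$, and correspondingly $V_L$ is a direct sum of the irreducible $V_N$-modules $V_{N(j,\bsa)}$ it contains. Hence $V_L$ is a $\sigma$-type $M^0$-module if and only if each $V_{N(j,\bsa)}$ inside $V_L$ is. Using the already recorded decomposition of $V_{N(j,\bsa)}$ into irreducible $T$-modules, the $M^0$-factors appearing there are $M^{i_k, j + (i_k - b_k)/2}$ with $i_k \equiv b_k \pmod{2}$, and these are simultaneously of $\sigma$-type exactly when $i_k$ is forced to be even and $2j = b_k = \wt(\bsa)$.

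The remaining task is to show that the union $X' = \bigcup_{2j = \wt(\bsa)} N(j,\bsa)$ of such ``good'' cosets coincides with $X = \tfrac{1}{2}N$, because the condition $2L \subset N$ is equivalent to $L \subset X$ under the hypothesis $N \subset L \subset N^*$. For the inclusion $X' \subset X$, I would start from the representative $2j\lm_k - \sum_i a_i \lm_i$ of $N(j,\bsa)$, pair the $2j$ indices $i_1 < \cdots < i_{2j}$ on which $\bsa$ is nonzero, and apply the identity
\begin{equation*}
  2\lm_k - \lm_r - \lm_s \equiv \tfrac{1}{2}(\al_r - \al_s) \pmod{N}
\end{equation*}
together with $\al_r - \al_s = \be_r + \cdots + \be_{s-1}$ to rewrite the representative as a half-integer combination of the simple roots $\be_i$, which visibly lies in $X$.

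For the reverse inclusion $X \subset X'$, it suffices to exhibit each generator $\tfrac{1}{2}\be_r$ of $X/N$ inside some good coset and then observe that the good cosets form a subgroup of $N^*/N$. The first part is immediate: $\tfrac{1}{2}\be_r \in N(1,\bsa)$ where $\bsa$ is supported on $\{r, r+1\}$, and this coset satisfies $2j = \wt(\bsa) = 2$. The closure step would use the coset-addition formula
\begin{equation*}
  N(j,\bsa) + N(j',\bsa') = N\bigl(j + j' - \tfrac{1}{2}(\wt(\bsa) + \wt(\bsa') - \wt(\bsa + \bsa')),\, \bsa + \bsa'\bigr)
\end{equation*}
to check that the condition $2j = \wt(\bsa)$ is preserved under sums; then any $\Z$-combination of the $\tfrac{1}{2}\be_r$'s, and hence any element of $X$, lands in $X'$.

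The main technical obstacle is the closure-under-addition step: the representatives $\lm_i$ are not themselves closed modulo $N$, so one must verify that the correction term $\tfrac{1}{2}(\wt(\bsa) + \wt(\bsa') - \wt(\bsa + \bsa'))$ is always an integer (this comes from the standard parity identity $\wt(\bsa) + \wt(\bsa') \equiv \wt(\bsa + \bsa') \pmod{2}$ for symmetric differences) and that the resulting $j''$ continues to satisfy $2j'' = \wt(\bsa + \bsa')$. With this bookkeeping in hand, the two inclusions combine to give $X = X'$, and the proposition follows immediately from the coset-by-coset reduction of the first paragraph.
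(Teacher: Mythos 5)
Your proposal is correct and follows essentially the same route as the paper: the coset-by-coset reduction via the $T$-module decomposition of $V_{N(j,\bsa)}$, the identification of the good cosets with the condition $2j = \wt(\bsa)$, and the two inclusions proving that their union is $X = \tfrac{1}{2}N$ (pairing the nonzero indices via $2\lm_k - \lm_r - \lm_s \equiv \tfrac{1}{2}(\al_r - \al_s) \pmod{N}$ in one direction, and generators $\tfrac{1}{2}\be_r$ plus the coset-addition formula in the other). The parity bookkeeping you flag as the main obstacle is exactly the computation $2j'' = 2j + 2j' - \wt(\bsa) - \wt(\bsa') + \wt(\bsa+\bsa') = \wt(\bsa+\bsa')$, which the paper also carries out.
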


We consider a positive definite even lattice containing $N$ as an RSSD sublattice. 

\begin{theorem}\label{thm:N_RSSD_L}
Let $L$ be a positive definite even lattice containing a sublattice 
$N = \sqrt{2}A_{k-1}$ with $k \ge 3$ an integer. 
Suppose $N$ is RSSD in $L$. 

\textup{(1)} $V_L$ is a $\sigma$-type $M^0$-module.

\textup{(2)} If $L(2) = \varnothing$, then $\varphi(\sigma_{M^0}) = t_N$, 
where $\varphi : \Aut(V_L) \to O(L)$ is as in \eqref{eq:ex_seq_Aut_VL}, 
$\sigma_{M^0}$ is the $\sigma$-involution of $V_L$ associated with $M^0$ 
as in Theorem \ref{thm:sigma-inv}, 
and $t_N$ is the RSSD involution of $L$ associated with $N$. 
\end{theorem}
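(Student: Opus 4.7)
The plan is to analyze $V_L$ as a $V_N\otimes V_B$-module, where $B=\Ann_L(N)$. By Remark~\ref{rmk:VL_VAVB}, $V_L$ is a simple current extension of $V_N\otimes V_B$, so
\[
  V_L = \bigoplus_{c} V_{N+a(c)}\otimes V_{B+b(c)},
\]
where $c$ runs over a system of representatives of $L/(N\oplus B)$ and each $\alpha\in L$ has the unique orthogonal decomposition $\alpha=a+b$ with $a\in N^*$ and $b\in B^*$. The RSSD hypothesis $2L\subset N+B$ forces $2a(c)\in N$ and $2b(c)\in B$ for every $c$. Since $B$ is orthogonal to $N$, $V_B$ commutes elementwise with $V_N\supset M^0$, so the $M^0$-module structure of each tensor summand is governed entirely by the first factor, with $V_{B+b(c)}$ serving as a multiplicity space.

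For (1), the claim reduces to showing that whenever $a\in N^*$ satisfies $2a\in N$, the irreducible $V_N$-module $V_{N+a}$ is of $\sigma$-type as an $M^0$-module. This is precisely the content of the analysis preceding Proposition~\ref{prop:s-type_cond}: any such coset coincides with some $N(j,\bsa)$ with $2j=\wt(\bsa)$, and substituting $b_k=\wt(\bsa)=2j$ into the $T$-module decomposition of $V_{N(j,\bsa)}$ recalled in Section~\ref{sec:s_VN} forces every irreducible $M^0$-summand of $V_{N+a}$ to take the form $M^{i_k,i_k/2}$, hence to be of $\sigma$-type.

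For (2), the exact sequence \eqref{eq:ex_seq_Aut_VL} shows $\varphi(\sigma_{M^0})\in O(L)$ is determined by the action of $\sigma_{M^0}$ on $(V_L)_1$. Since $L(2)=\varnothing$ (and hence $N(2)=B(2)=\varnothing$), one has $(V_L)_1 = M(1)_1 = \h = \h_N\oplus\h_B$, where $\h_N=\C\otimes N$ and $\h_B=\C\otimes B$. On $\h_N\subset V_N$, the restriction $\sigma_{M^0}|_{V_N}$ coincides with the $\sigma$-involution of $V_N$ associated with $M^0$, since the $M^{0,+}$-isotypic decomposition of $V_L$ restricts to that of $V_N$; by Theorem~\ref{thm:theta-sigma} this restriction equals $\theta$, which acts as $-1$ on $\h_N$. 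On $\h_B\subset V_B$, every $v\in V_B$ commutes with $M^0$ and in particular satisfies $L(0)_{M^0}v=0$; since $(M^{0,0})^0=M^{0,+}$ is the unique irreducible $M^{0,+}$-module whose top level has conformal weight $0$, this forces $V_B\subset V_{M^{0,+}}[0,0]$, where $\sigma_{M^0}$ acts as $(-1)^{0+0}=1$. As $t_N|_\h$ is likewise $-1$ on $\h_N$ and $+1$ on $\h_B$, one concludes $\varphi(\sigma_{M^0})=t_N$.

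The delicate step is the containment $V_B\subset V_{M^{0,+}}[0,0]$: the vanishing of $L(0)_{M^0}$ on $V_B$ is immediate from commutativity, but converting this weight information into membership in the correct isotypic component relies on the classification of top-level conformal weights of the irreducible $M^{0,+}$-modules recalled in Subsection~\ref{subse:def_sigma}.
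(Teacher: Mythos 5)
Your proposal is correct and follows essentially the same route as the paper: decompose $V_L$ over $V_N\otimes V_B$ with $B=\Ann_L(N)$, deduce $2a\in N$ from Lemma~\ref{lem:prA} and invoke Proposition~\ref{prop:s-type_cond} for (1), and for (2) combine Theorem~\ref{thm:theta-sigma} on the $V_N$-part with the triviality of $\sigma_{M^0}$ on $V_B$. Your conformal-weight argument placing $V_B$ inside $V_{M^{0,+}}[0,0]$ just makes explicit a step the paper leaves implicit in the phrase ``as $V_N\otimes V_B\subset V_L$.''
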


\begin{proof}
Set $B = \Ann_L(N)$. Then $\rank N + \rank B = \rank L$, and 
$V_N \otimes V_B$ is a vertex operator subalgebra of $V_L$, 
see Remark \ref{rmk:VL_VAVB}. 
We have $L \subset L^* \subset N^* \oplus B^*$ as in \eqref{eq:AB_dec}. 
Let $\alpha \in L$. 
Then $\alpha = a + b$ for some $a \in N^*$ and $b \in B^*$. 
Hence $V_{N + B + \alpha} = V_{N+a} \otimes V_{B+b}$ 
as $V_N \otimes V_B$-modules. 
Since $N$ is RSSD in $L$, we have $2a \in N$ by Lemma \ref{lem:prA}. 
Thus $V_{N+a}$ is a $\sigma$-type $M^0$-module 
by Proposition \ref{prop:s-type_cond}. 
This proves the assertion (1). 

Assume that $L(2) = \varnothing$. 
Theorem \ref{thm:theta-sigma} implies that $\varphi(\sigma_{M^0})$ 
is $-1$ on $N$. 
Moreover, $\varphi(\sigma_{M^0})$ is $1$ on $B$ as $V_N \otimes V_B \subset V_L$. 
Hence the assertion (2) holds.
\end{proof}

\section{Centralizer of $\hn$ in $\Aut(V_{\sqrt{2}A_{k-1}})$}
\label{sec:cent_in_aut_VNnu}

We keep the notation in Section \ref{sec:s_VN}. 
Thus $k \ge 3$ is an integer, 
$\la \alpha_i, \alpha_j \ra = 2 \delta_{i, j}$ for $1 \le i, j \le k$, 
$\beta_i = \alpha_i - \alpha_{i+1}$,  and 
$N = \Z \beta_1 + \cdots + \Z\beta_{k-1} \cong \sqrt{2}A_{k-1}$. 
For convenience, we set $\beta_k = \alpha_k - \alpha_1$. 
We regard the indices of $\alpha$ and $\beta$ as elements of $\Z_k$. 
The vertex operator algebra $V_N = M(1) \otimes \C[N]$ is spanned by
\begin{equation}\label{eq:vector_VN}
  \beta_{i_1}(-n_1) \cdots \beta_{i_r}(-n_r) \otimes e^\alpha
\end{equation}
for $r \ge 0$, $1 \le i_s \le k-1$, $n_s > 0$, and $\alpha \in N$. 
Moreover,  
\begin{equation}\label{eq:aut_VN}
  \Aut(V_N) = N(V_N) : O(N)
\end{equation}
is a split extension of $N(V_N)$ by $O(N)$.

Let $\nu \in O(N)$ be an isometry of $N$ induced by a cyclic permutation
\[
  \nu: \alpha_1 \mapsto \alpha_2 \mapsto \cdots \mapsto \alpha_k 
  \mapsto \alpha_1
\]
of order $k$. 
The isometry $\nu$ is fixed point free on $N$. 
In fact, $\nu$ corresponds to a Coxeter element of the Weyl group 
of the root system of type $A_{k-1}$. 
Let $\hn \in \Aut(V_N)$ be a lift of $\nu$. 
Then $\hn$ transforms the vector of $V_N$ in \eqref{eq:vector_VN} as 
\[
  \hn(\beta_{i_1}(-n_1) \cdots \beta_{i_r}(-n_r) \otimes e^\alpha)
  = \nu(\beta_{i_1})(-n_1) \cdots \nu(\beta_{i_r})(-n_r) \otimes e^{\nu \alpha}.
\]
The automorphism $\hn$ is the identity on $M^0$ 
by the definition of $M^0$ in \cite[Section 4]{DLY2009}, 
so it commutes with the $\sigma$-involution $\sigma_{M^0}$ of $V_N$ 
associated with $M^0$.

When we consider $O(N)$ to be a subgroup of $\Aut(V_N)$, 
we write $\nu$ and $-1$ for the automorphisms $\hn$ and $\theta$ 
of $V_N$, respectively. 
The centralizer $C_{\Aut(V_N)}(\hn)$ of $\hn$ in $\Aut(V_N)$ 
is 
\[
  C_{\Aut(V_N)}(\hn) = C_{N(V_N)}(\hn) : C_{O(N)}(\nu)
\]
by \eqref{eq:aut_VN}. 
Moreover, $O(N) = \la -1 \ra \times \Sym_k$, 
and $C_{O(N)}(\nu) = \la -1 \ra \times \la \nu \ra$, 
where a symmetric group $\Sym_k$ of degree $k$ is 
the Weyl group of the root system of type $A_{k-1}$. 
We also have
\begin{equation*}
  \begin{split}
  C_{N(V_N)}(\hn) 
  &= \{\exp(h(0)) \mid h \in 2\pi\sqrt{-1}((1-\nu)N)^*\}\\
  &\cong \Hom(N/(1-\nu)N, \Z_k \}, 
  \end{split}
\end{equation*}
see Remark \ref{rmk:C_AutVL_hatg}. 

Now, $|N/(1-\nu)N| = k$. 
Indeed, $1 + \nu + \cdots + \nu^{k-1} = 0$ on $N$ 
as $\nu$ is fixed point free on $N$ of order $k$, 
Since $\nu^i \beta_1 = \beta_{1+i}$ for $0 \le i \le k-2$, the minimal polynomial 
of $\nu$ on $N$ is $1 + x + \cdots + x^{k-1}$. 
This implies that $|N/(1-\nu)N| = k$, 
see the proof of \cite[Lemma A.1]{GL2011b}. 

Let $\rho$ be the Weyl vector of the root system of type $A_{k-1}$, that is, 
$\rho$ is the half-sum of positive roots. 
Then $\la \rho, \beta_i \ra = \sqrt{2}$ for $1 \le i \le k-1$, 
and
\begin{equation*}
  \la \rho, (1-\nu)\beta_i \ra/\sqrt{2} = 
  \begin{cases}
  0 & \text{if } 1 \le i \le k-2,\\
  k &  \text{if } i = k-1.
  \end{cases}
\end{equation*}
Thus $\rho/\sqrt{2}k \in ((1-\nu)N)^*$. 
Since $|N/(1-\nu)N| = k$ implies $|((1-\nu)N)^*/N^*| = k$, we see that 
$\rho/\sqrt{2}k + N^*$ generates $((1-\nu)N)^*/N^*$. 

Define $\psi \in N(V_N)$ by 
\begin{equation*}
  \psi = \exp(2\pi\sqrt{-1} \rho(0)/\sqrt{2}k).
\end{equation*}
Then $C_{N(V_N)}(\hn) = \la \psi \ra$ is a cyclic group of order $k$, and
\[
  C_{\Aut(V_N)}(\hn) = \la \psi \ra : (\la \theta \ra \times \la \hn \ra). 
\]
The automorphism $\psi$ acts on the vector in \eqref{eq:vector_VN} as 
$\zeta_k^{\la \rho,\alpha \ra/\sqrt{2}}$, 
where $\zeta_k = \exp(2\pi\sqrt{-1}/k)$. 
Moreover, $\theta$ and $\psi$ generate 
a subgroup isomorphic to a dihedral group $\Dih_{2k}$ of order $2k$ 
as $\theta \psi \theta = \psi^{-1}$.

As for the normalizer of $\la \hn \ra$ in $\Aut(V_N)$, 
we have 
\[
  N_{\Aut(V_N)}(\la \hn \ra) 
  = C_{N(V_N)}(\hn) : N_{O(N)}(\la \nu \ra) 
\]
by \eqref{eq:aut_VN}. 
The normalizer of $\la \nu \ra$ in $\Sym_k$ is a split extension of 
$\la \nu \ra$ by the multiplicative group $\Z_k^\times$ consisting 
of invertible elements in $\Z_k$. 
Therefore, the following theorem holds.

\begin{theorem}\label{thm:aut_VNnu}
Let $k \ge 3$ be an integer, and  
let $\theta$, $\hn$, and $\psi$ be as above. 
Then the centralizer of $\hn$ and the normalizer of $\la \hn \ra$ in 
$\Aut(V_{\sqrt{2}A_{k-1}})$ are as follows. 

\textup{(1)} 
$C_{\Aut(V_{\sqrt{2}A_{k-1}})}(\hn) 
= \la \psi \ra : (\la \theta \ra \times \la \hn \ra)$  
with $\la \psi \ra : \la \theta \ra \cong \Dih_{2k}$. 

\textup{(2)} 
$N_{\Aut(V_{\sqrt{2}A_{k-1}})}(\la \hn \ra) 
= \la \psi \ra : (\la \theta \ra \times (\la \hn \ra : \Z_k^\times))$.
\end{theorem}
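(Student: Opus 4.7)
The plan is to assemble the pieces already developed in the paragraphs just before the statement. The structural backbone is the split extension $\Aut(V_N) = N(V_N) : O(N)$ of \eqref{eq:aut_VN}, which allows me to compute centralizers and normalizers factor by factor and then recombine.

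For assertion (1), I would first write $C_{\Aut(V_N)}(\hn) = C_{N(V_N)}(\hn) : C_{O(N)}(\nu)$ via the split extension. The right factor is controlled by the decomposition $O(N) = \la -1 \ra \times \Sym_k$: since $\nu$ is a $k$-cycle and the centralizer of a $k$-cycle in $\Sym_k$ is the cyclic group it generates, I obtain $C_{O(N)}(\nu) = \la -1 \ra \times \la \nu \ra$, which corresponds to $\la \theta \ra \times \la \hn \ra$ inside $\Aut(V_N)$. For the left factor I invoke Theorem \ref{thm:C_AutVL_hatg}(3) together with Remark \ref{rmk:C_AutVL_hatg} (which removes the odd-order hypothesis) to identify $C_{N(V_N)}(\hn)$ with $\{\exp(h(0)) \mid h \in 2\pi\sqrt{-1}((1-\nu)N)^*\}$, and then use the computations preceding the theorem: the minimal polynomial of $\nu$ on $N$ forces $|N/(1-\nu)N|=k$, hence $|((1-\nu)N)^*/N^*|=k$, and the pairing $\la \rho, (1-\nu)\beta_i \ra$ exhibits $\rho/\sqrt{2}k$ as a generator of this quotient. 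Thus $C_{N(V_N)}(\hn) = \la \psi \ra$ is cyclic of order $k$. The dihedral relation is immediate: $\theta$ acts as $-1$ on $\h$, hence negates $\rho$, so $\theta \psi \theta^{-1} = \exp(-2\pi\sqrt{-1}\rho(0)/\sqrt{2}k) = \psi^{-1}$, giving $\la \psi, \theta \ra \cong \Dih_{2k}$. Since $\hn$ commutes with $\theta$ (both live in $O(N)$ and the relevant permutation and $-1$ commute) and with $\psi$ (by the choice of $\rho/\sqrt{2}k \in ((1-\nu)N)^*$), the two factors $\la \theta \ra$ and $\la \hn \ra$ form a direct product complementing $\la \psi \ra$, yielding the asserted semidirect structure.

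For assertion (2), the same split extension gives $N_{\Aut(V_N)}(\la \hn \ra) = C_{N(V_N)}(\hn) : N_{O(N)}(\la \nu \ra)$, and I can reuse $C_{N(V_N)}(\hn) = \la \psi \ra$ from the previous step. For the right factor, the decomposition $O(N) = \la -1 \ra \times \Sym_k$ reduces the problem to $N_{\Sym_k}(\la \nu \ra)$, and the standard fact that the normalizer of the cyclic subgroup generated by a $k$-cycle in $\Sym_k$ is the split extension $\la \nu \ra : \Z_k^\times$ (where $m \in \Z_k^\times$ acts on $\nu$ by $\nu \mapsto \nu^m$, realized by an explicit relabeling permutation) completes the calculation. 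Combining gives the stated structure.

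The only genuinely substantive step is the identification $C_{N(V_N)}(\hn) = \la \psi \ra$. It is here that one needs both the cardinality computation $|((1-\nu)N)^*/N^*| = k$ (resting on the minimal-polynomial observation for $\nu$) and an explicit generator; the Weyl vector $\rho$ is the natural candidate, and the pairing calculation $\la \rho, (1-\nu)\beta_i \ra / \sqrt{2} \in \{0,k\}$ is what makes it work. Once this is in place, the remainder of the theorem follows by routine bookkeeping from \eqref{eq:aut_VN} and standard group-theoretic facts about cyclic subgroups of $\Sym_k$.
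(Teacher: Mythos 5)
Your proposal reproduces the paper's own argument essentially verbatim: the same split extension $\Aut(V_N)=N(V_N):O(N)$, the same identification $C_{N(V_N)}(\hn)=\la\psi\ra$ via Theorem \ref{thm:C_AutVL_hatg}(3), Remark \ref{rmk:C_AutVL_hatg}, the minimal-polynomial computation of $|N/(1-\nu)N|$ and the Weyl-vector generator, and the same reduction of the $O(N)$-factors to $C_{\Sym_k}(\nu)$ and $N_{\Sym_k}(\la\nu\ra)$. It is correct and requires no changes.
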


In order to describe the action of $\Z_k^\times$ on $\la \psi \ra$, 
we consider the root lattice $A_{k-1}$ inside $\R^k$. 
Let $\vep_1, \ldots, \vep_k$ be the unit vectors in $\R^k$. 
Then $\vep_i - \vep_{i+1}$, $1 \le i \le k-1$, 
form the set of simple roots of $A_{k-1}$, and the Weyl vector $\rho$ is 
\[
  \rho = \frac{1}{2} \sum_{i=1}^k (k+1-2i)\vep_i.
\]

Let $r_i$ be the isometry of $\R^k$ induced by the transposition 
of $\vep_i$ and $\vep_{i+1}$. 
Then the isometry $\nu \in O(N)$ corresponds to $\lambda = r_1 \cdots r_{k-1}$. 
For convenience, we regard the index of $\vep$ as an element of $\Z_k$. 
For $s \in \Z_k^\times$, 
let $\tau_s$ be a permutation on $\{ \vep_1, \ldots, \vep_k \}$ 
defined by $\tau_s(\vep_i) = \vep_{si}$, where $si$ is the product of $s$ and $i$ 
in $\Z_k$. 
Then $\tau_s \lambda \tau_s^{-1} = \lambda^s$ as $\lambda(\vep_i) = \vep_{i+1}$. 
The normalizer of $\la \lambda \ra$ in $\Sym_k$ is 
$N_{\Sym_k}(\la \lambda \ra) = \la \lambda \ra : H$, 
where $H = \{ \tau_s \mid s \in \Z_k^\times \} \cong \Z_k^\times$. 

We have $\la \rho, \vep_i - \vep_{i+1} \ra = 1$ and  
$\la \tau_s \rho, \vep_i - \vep_{i+1} \ra = s^{-1}$. 
Hence $\hat{\tau}_s \psi \hat{\tau}_s^{-1} = \psi^{s^{-1}}$, 
where $\hat{\tau}_s$ is a lift of 
the isometry of $N = \sqrt{2}A_{k-1}$ induced by $\tau_s$. 
In particular, $\hat{\tau}_{k-1} \psi \hat{\tau}_{k-1} = \psi^{-1}$. 
Thus $\hat{\tau}_{k-1}\theta$ centralizes $\psi$. 

By the above argument, we obtain the following corollary.

\begin{corollary}\label{cor:aut_VNnu_over_nu}
$N_{\Aut(V_{\sqrt{2}A_{k-1}})}(\la \hn \ra) / \la \hn \ra
\cong \la \psi \ra : (\la \theta \ra \times \Z_k^\times)$, 
where $\la \psi \ra : \la \theta \ra$ is a dihedral group and 
$\la \psi \ra : \Z_k^\times$ is a Frobenius group.
\end{corollary}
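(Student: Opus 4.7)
The plan is to deduce the corollary directly from Theorem \ref{thm:aut_VNnu}(2) by quotienting the normalizer $N := N_{\Aut(V_{\sqrt{2}A_{k-1}})}(\la \hn \ra) = \la \psi \ra : (\la \theta \ra \times (\la \hn \ra : \Z_k^\times))$ by its normal subgroup $\la \hn \ra$. The subgroup $\la \hn \ra$ is normal in $N$ by the defining property of the normalizer, and it sits entirely inside the direct factor $\la \theta \ra \times (\la \hn \ra : \Z_k^\times)$ as the normal subgroup of the inner semidirect product $\la \hn \ra : \Z_k^\times$. Thus forming the quotient should collapse that inner $\la \hn \ra$ while leaving the other factors intact.

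To make this precise, I would verify that $\la \hn \ra$ has trivial intersection with each of $\la \psi \ra$, $\la \theta \ra$, and the subgroup $\la \hat{\tau}_s \mid s \in \Z_k^\times \ra \cong \Z_k^\times$. Applying the homomorphism $\varphi$ in \eqref{eq:ex_seq_Aut_VL}, one has $\psi \in N(V_N) = \Ker\varphi$ while $\varphi(\hn^j) = \nu^j \ne 1$ for $1 \le j \le k-1$, so $\la \psi \ra \cap \la \hn \ra = 1$; similarly $\varphi(\theta) = -1$ and $\varphi(\hat{\tau}_s) = \tau_s$ are not among the $\nu^j$, giving $\la \theta \ra \cap \la \hn \ra = 1$ and $\la \hat{\tau}_s \ra \cap \la \hn \ra = 1$. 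Consequently $\psi$, $\theta$, and each $\hat{\tau}_s$ descend to elements of their original orders in the quotient. Since $\hn$ is centralized by both $\psi$ and $\theta$ (as elements of $C_{\Aut(V_N)}(\hn)$), the relations $\theta \psi \theta = \psi^{-1}$ and $\hat{\tau}_s \psi \hat{\tau}_s^{-1} = \psi^{s^{-1}}$ established in the paragraphs preceding the corollary survive unchanged in $N/\la \hn \ra$, and $\theta$ commutes with $\hat{\tau}_s$ modulo $\la \hn \ra$ by the direct-product structure in Theorem \ref{thm:aut_VNnu}(2). This yields
\[
  N/\la \hn \ra \cong \la \psi \ra : (\la \theta \ra \times \Z_k^\times).
\]

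The two named subgroup structures then follow at once: $\la \psi \ra : \la \theta \ra \cong \Dih_{2k}$ by Theorem \ref{thm:aut_VNnu}(1), and $\la \psi \ra : \Z_k^\times$ is a Frobenius group with kernel $\la \psi \ra \cong \Z_k$ and complement $\Z_k^\times$ acting by the natural multiplication $\psi \mapsto \psi^{s^{-1}}$. There is no substantive obstacle here: the structural work was already completed in Theorem \ref{thm:aut_VNnu} together with the explicit computation $\hat{\tau}_s \psi \hat{\tau}_s^{-1} = \psi^{s^{-1}}$ immediately preceding the corollary, and the argument reduces to reading off the quotient and identifying the inherited dihedral and Frobenius sub-structures.
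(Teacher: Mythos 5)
Your proposal is correct and follows essentially the same route as the paper, which derives the corollary directly from Theorem \ref{thm:aut_VNnu}(2) together with the relation $\hat{\tau}_s \psi \hat{\tau}_s^{-1} = \psi^{s^{-1}}$ established just before the statement; you merely spell out the routine verifications (trivial intersections with $\la \hn \ra$ via $\varphi$, survival of the defining relations in the quotient) that the paper leaves implicit under ``by the above argument.''
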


Recall that $\sigma_{M^0} = \theta$ in $\Aut(V_N)$ by 
Theorem \ref{thm:theta-sigma}. 
Since $\psi^{2i}\theta = \psi^i \theta \psi^{-i}$, 
the next proposition holds.

\begin{proposition}\label{prop:other_sigma}
$\psi^{2i} \sigma_{M^0} = \psi^i \sigma_{M^0} \psi^{-i}$ 
is the $\sigma$-involution associated with $\psi^{i}(M^0)$. 
There are $k$ or $k/2$ such $\sigma$-involutions for 
$0 \le i \le k-1$ or $0 \le i \le k/2 - 1$ according as $k$ is odd or even. 
\end{proposition}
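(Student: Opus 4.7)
The plan is to reduce everything to the identity $\sigma_{M^0}=\theta$ from Theorem \ref{thm:theta-sigma} together with the dihedral relation $\theta\psi\theta=\psi^{-1}$ recorded in Section \ref{sec:cent_in_aut_VNnu}, and to the naturality statement that conjugating a $\sigma$-involution by any $\phi\in\Aut(V_N)$ produces the $\sigma$-involution associated with the image subalgebra.

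First I would verify the identity $\psi^{2i}\sigma_{M^0}=\psi^i\sigma_{M^0}\psi^{-i}$. From $\theta\psi\theta^{-1}=\psi^{-1}$ one gets $\theta\psi^{-i}=\psi^i\theta$, and therefore
\begin{equation*}
\psi^i\sigma_{M^0}\psi^{-i}
=\psi^i\theta\psi^{-i}
=\psi^i\cdot\psi^i\theta
=\psi^{2i}\theta
=\psi^{2i}\sigma_{M^0}.
\end{equation*}

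Next I would show that $\psi^i\sigma_{M^0}\psi^{-i}$ coincides with the $\sigma$-involution $\sigma_{\psi^i(M^0)}$. Set $W=M^0$ and $W'=\psi^i(M^0)$. Since $\psi^i\in\Aut(V_N)$, the subalgebra $W'$ is again isomorphic to $K(\mathfrak{sl}_2,k)$, it has fixed point subalgebra $(W')^{+}=\psi^i(W^{+})$, and $V_N$ is of $\sigma$-type as a $W'$-module because $\psi^i$ intertwines the two module structures. Consequently $\psi^i$ carries the isotypic decomposition \eqref{eq:dec_Vtau} for $W$ to the corresponding decomposition for $W'$, in the sense that $\psi^i(V_{W^{+}}[j,\ep])=V_{(W')^{+}}[j,\ep]$ for every $j$ and $\ep$. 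Since $\sigma_W$ and $\sigma_{W'}$ act on their respective summands by the same scalar $(-1)^{j+\ep}$, one reads off $\psi^i\sigma_W\psi^{-i}=\sigma_{W'}$.

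Finally I would count the distinct involutions. As $i$ varies over $\{0,1,\ldots,k-1\}$, the element $\psi^{2i}\theta$ lies in the nontrivial coset $\la\psi\ra\theta$, and, since $\psi$ has order $k$ and the $k$ cosets $\psi^a\theta$ with $0\le a\le k-1$ are pairwise distinct, $\psi^{2i}\theta$ depends only on $2i\bmod k$. When $k$ is odd, $\gcd(2,k)=1$ and $\{2i\bmod k\}$ exhausts $\Z_k$, giving $k$ distinct involutions; when $k$ is even, $\{2i\bmod k\}=\{0,2,\ldots,k-2\}$, so the range $0\le i\le k/2-1$ already yields the complete list of $k/2$ distinct involutions.

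The only non-routine point is the naturality statement $\psi^i\sigma_W\psi^{-i}=\sigma_{\psi^i(W)}$, but because $\sigma_W$ is defined purely through the isotypic decomposition of $V_N$ relative to the parafermion subalgebra, this reduces to the tautology that a vertex algebra automorphism intertwines the decompositions associated with $W$ and with $\psi^i(W)$; no calculation involving the explicit form of $\psi$ is required beyond the dihedral relation used in the first step.
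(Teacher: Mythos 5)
Your proposal is correct and follows the same route as the paper: the paper's proof is exactly the observation that $\sigma_{M^0}=\theta$ (Theorem \ref{thm:theta-sigma}) together with the dihedral relation $\psi^{2i}\theta=\psi^i\theta\psi^{-i}$, with the naturality of $\sigma$-involutions under conjugation and the counting left implicit. You have simply spelled out those implicit steps, and they are all sound.
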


\section{Automorphism group of $V_{\sqrt{2}A_{p-1}}^{\la \hn \ra}$}
\label{sec:aut_VNnu}

Let $p$ be an odd prime. 
We keep the notation in Sections \ref{sec:s_VN} and \ref{sec:cent_in_aut_VNnu} 
with $k = p$. 
In this section, we determine the automorphism group 
$\Aut(V_N^{\la \hn \ra})$ of the fixed point subalgebra $V_N^{\la \hn \ra}$ of 
$V_N$ by $\hn$, where $N = \sqrt{2}A_{p-1}$ 
and $\hn$ is a lift of the fixed point free isometry $\nu$ of $N$ of order $p$. 
Since $V_N$ is simple, self-dual, rational, $C_2$-cofinite, and of CFT-type, 
the fixed point subalgebra $V_N^{\la \hn \ra}$ 
is also simple, self-dual, rational, $C_2$-cofinite, and of CFT-type 
by \cite{CM2016} and \cite{Miyamoto2015}. 
Thus every irreducible $V_N^{\la \hn \ra}$-module appears in an irreducible 
$\hn^i$-twisted $V_N$-module for some $0 \le i \le p-1$ by 
\cite[Theorem 3.3]{DRX2017}.

We first discuss irreducible $V_N^{\la \hn \ra}$-modules contained 
in an irreducible $\hn$-twisted $V_N$-module. 
Irreducible twisted modules for a lattice vertex operator algebra 
were constructed explicitly, 
see \cite{BK2004}, \cite{DL1996}, \cite{Lepowsky1985}. 
Following \cite[(4.17)]{BK2004} (see also \cite[Remark 3.1]{DLM2000}), 
set
\[
  \h^{(i,\nu)} = \{ h \in \h \mid \nu h = \zeta_p^{-i} h \}, \quad 0 \le i \le p-1,
\]
where $\h = \C \otimes_\Z N$ and $\zeta_p = \exp(2\pi\sqrt{-1}/p)$. 
Then $\h^{(0,\nu)} = 0$, and $\dim \h^{(i,\nu)} = 1$ for $1 \le i \le p-1$. 
Let $\wh[\nu]$ be the $\nu$-twisted affine Lie algebra as in 
(4.3) - (4.5) of \cite{DL1996}, and let $S[\nu]$ 
be the induced $\wh[\nu]$-module as in (4.9) of \cite{DL1996}. 
Define a $\nu$-invariant alternating 
$\Z$-bilinear map $c^\nu : N \times N \to \Z_{2p}$ by 
\begin{equation*}
  c^\nu(\alpha,\beta) = 2 \sum_{i=1}^{p-1} \la i \nu^i(\alpha),\beta \ra + 2p\Z
\end{equation*}
for $\alpha$, $\beta \in N$, see \cite[Remark 2.2]{DL1996}. 
We denote the radical of $c^\nu$ by $R_N^\nu$. 

Let $\hat{N}_\nu$ be a central extension of $N$ by a cyclic group 
$\la \kappa_{2p} \ra$ of order $2p$ with the commutator map $c^\nu$. 
Then the irreducible $\hn$-twisted $V_N$-module constructed in 
\cite{DL1996}, \cite{Lepowsky1985} is of the form
\begin{equation}\label{eq:twisted_VN}
  V_N^{T,\hn} = S[\nu] \otimes T,
\end{equation}
where $T$ is an irreducible $\hat{N}_\nu$-module 
\cite[(4.25)]{DL1996}, \cite[(7.6)]{Lepowsky1985}. 
There are $|R_N^\nu/(1-\nu)N|$ inequivalent irreducible $\hn$-twisted 
$V_N$-modules of such a form \cite[Proposition 6.2]{Lepowsky1985}. 

It follows from \cite[Lemma 3.2]{ALY2018} that 
$R_N^\nu = N \cap (1-\nu)N^*$. 
Since $2\lambda_i = \gamma_p/p - \alpha_i$ 
by \eqref{eq:def_lambda_p} with $k = p$, 
we have $- \beta_i = (1-\nu)(2\lambda_i) \in (1-\nu)N^*$.  
Thus $R_N^\nu = N$. 
Moreover, $|N/(1-\nu)N| = p$ by \cite[Lemma A.1]{GL2011b},  
so there are $p$ inequivalent irreducible $\hn$-twisted 
$V_N$-modules of the form \eqref{eq:twisted_VN}. 
Among the irreducible $V_N$-modules $V_{N(j,\bsa)}$, 
there are exactly $p$ $\hn$-stable ones, namely, 
$V_{N(j,(0,\ldots,0))}$ for $0 \le j \le p-1$. 
Thus any irreducible $\hn$-twisted $V_N$-module is isomorphic 
to $V_N^{T,\hn}$ for some $T$ by \cite[Theorem 10.2]{DLM2000}.  
The confomal weight of $V_N^{T,\hn}$ is 
\begin{equation*}
  \begin{split}
  h(V_N^{T,\hn}) 
  &= \frac{1}{4p^2} \sum_{i=1}^{p-1} i(p-i) \dim \h^{(i,\nu)}\\
  & = \frac{(p-1)(p+1)}{24p}
  \end{split}
\end{equation*}
by \cite[(6.28)]{DL1996}, which is not an integer. 

Since $\hn$ has prime order, 
the above argument for $\hn$ can be applied to any $\hn^i$, $1 \le i \le p-1$. 
Therefore, we have that the conformal weight of any irreducible $\hn^i$-twisted 
$V_N$-module is not an integer for $1 \le i \le p-1$. 

Next, we discuss irreducible $V_N^{\la \hn \ra}$-modules contained in 
an irreducible untwisted $V_N$-module $V_{N(j,\bsa)}$. 
Note that the conformal weight of any irreducible 
$V_N^{\la \hn \ra}$-module except for $V_N^{\la \hn \ra}$ is positive. 
If $V_{N(j,\bsa)}$ is not $\hn$-stable, then it is an irreducible 
$V_N^{\la \hn \ra}$-module. 
In this case, the quantum dimension of $V_{N(j,\bsa)}$ 
as a $V_N^{\la \hn \ra}$-module is greater than $1$, so 
$V_{N(j,\bsa)}$ is not a simple current $V_N^{\la \hn \ra}$-module 
by \cite[Proposition 4.17]{DJX2013}. 

We see from \cite[Theorem A.1]{AYY2019} that the conformal weight of 
$V_{N(j,(0,\ldots,0))}$ is $j(p-j)/p$ for $0 \le j \le p-1$, which is not an integer 
unless $j = 0$ as $p$ is a prime. 
If $j = 0$, then $N(j,(0,\ldots,0)) = N$. 
Let 
\[
  V_N(i) = \{ v \in V_N \mid \hn v = \zeta_p^{-i}v \}, \quad 0 \le i \le p-1.
\]
Then $V_N(0) = V_N^{\la \hn \ra}$, and the weight one subspace of $V_N(i)$ 
corresponds to $\h^{(i,\nu)}$. 
Moreover, $V_N(i)$, $0 \le i \le p-1$, are simple current 
$V_N^{\la \hn \ra}$-modules by \cite[Theorem 6.3]{DJX2013}. 

By the above argument, we obtain the following lemma.

\begin{lemma}\label{lem:VNi}
Among the irreducible $V_N^{\la \hn \ra}$-modules, only 
$V_N(i)$, $1 \le i \le p-1$, are simple currents with nonzero weight one subspace.
\end{lemma}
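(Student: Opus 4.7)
The plan is to assemble the classification of irreducible $V_N^{\la \hn \ra}$-modules recalled in the paragraphs leading up to the lemma. By \cite[Theorem 3.3]{DRX2017}, every irreducible $V_N^{\la \hn \ra}$-module sits inside either an irreducible $\hn^i$-twisted $V_N$-module for some $0 \le i \le p-1$, or an irreducible (untwisted) $V_N$-module $V_{N(j,\bsa)}$. The strategy is to treat these cases separately and to combine two obstructions: a non-integral conformal weight rules out a weight one vector, and quantum dimension strictly greater than $1$ rules out the simple current property.

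First I would dispose of the twisted modules. Since $p$ is an odd prime, the same formula used for $\hn$ applies to every $\hn^i$ with $1 \le i \le p-1$, showing that the conformal weight of every irreducible $\hn^i$-twisted $V_N$-module is a non-integer rational. Every $V_N^{\la \hn \ra}$-submodule has weights lying in the coset of that conformal weight modulo $\Z_{\ge 0}$, so no such module can carry a vector of weight $1$.

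Next I would handle the untwisted side. If $V_{N(j,\bsa)}$ is not $\hn$-stable, then it is already irreducible as a $V_N^{\la \hn \ra}$-module, but its quantum dimension over $V_N^{\la \hn \ra}$ exceeds $1$, so by \cite[Proposition 4.17]{DJX2013} it fails to be a simple current. If $V_{N(j,\bsa)}$ is $\hn$-stable, then by the discussion preceding the lemma it must be one of the $p$ modules $V_{N(j,(0,\ldots,0))}$ with $0 \le j \le p-1$; and by \cite[Theorem A.1]{AYY2019} its conformal weight is $j(p-j)/p$, which is a non-integer whenever $1 \le j \le p-1$ because $p$ is prime, once again killing any weight one vector.

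This leaves only the summands of $V_N = V_{N(0,(0,\ldots,0))}$ itself, namely the isotypic components $V_N(i)$, $0 \le i \le p-1$, all of which are simple currents by \cite[Theorem 6.3]{DJX2013}. Their weight one subspaces are precisely $\h^{(i,\nu)}$, and since $\nu$ is fixed point free on $N$ one has $\h^{(0,\nu)}=0$ while $\dim \h^{(i,\nu)} = 1$ for $1 \le i \le p-1$. Collecting the three cases yields the lemma. No genuine obstacle arises; the proof is bookkeeping on top of the classification already assembled above, and the only thing to double-check is that the quoted non-integrality of the twisted conformal weight $(p-1)(p+1)/(24p)$, and its analogues for the other $\hn^i$, is indeed valid for every odd prime $p$, which is immediate.
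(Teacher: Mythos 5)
Your proposal is correct and follows essentially the same route as the paper: it invokes \cite[Theorem 3.3]{DRX2017} to locate every irreducible $V_N^{\la \hn \ra}$-module in a (possibly twisted) $V_N$-module, kills the twisted sectors and the $\hn$-stable cosets $V_{N(j,(0,\ldots,0))}$, $j\neq 0$, via non-integrality of the conformal weight, excludes the non-$\hn$-stable $V_{N(j,\bsa)}$ by the quantum-dimension criterion of \cite[Proposition 4.17]{DJX2013}, and identifies the surviving $V_N(i)$ as simple currents via \cite[Theorem 6.3]{DJX2013} with weight one subspace $\h^{(i,\nu)}$. This is exactly the bookkeeping the paper performs in the paragraphs preceding the lemma.
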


Let $g$ be an automorphism of $V_N^{\la \hn \ra}$. 
Then for each $1 \le i \le p-1$, Lemma \ref{lem:VNi} implies that
$V_N(i) \circ g = V_N(j)$ for some $1 \le j \le p-1$. 
Hence $g$ can be extended to an automorphism of the vertex operator algebra 
$V_N$ by \cite[Theorem 2.1]{Shimakura2007} 
as $V_N = \bigoplus_{i=0}^{p-1} V_N(i)$ is a $\Z_p$-graded simple current 
extension of $V_N^{\la \hn \ra}$. 
Moreover, the next theorem holds by \cite[Corollary 2.2]{Shimakura2007} 
and Corollary \ref{cor:aut_VNnu_over_nu}. 

\begin{theorem}\label{thm:Aut_VNnu}
Let $p$ be an odd prime. 
Then 
$\Aut(V_{\sqrt{2}A_{p-1}}^{\la \hn \ra}) 
\cong N_{\Aut(V_{\sqrt{2}A_{p-1}})}(\la \hn \ra)/\la \hn \ra$, 
which has the shape $p : (2 \times (p-1))$ with 
$p:2$ a dihedral group of order $2p$,  
and $p:(p-1)$ a Frobenius group of order $p(p-1)$.
\end{theorem}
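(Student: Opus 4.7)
The plan is to prove $\Aut(V_N^{\la \hn \ra}) \cong N_{\Aut(V_N)}(\la \hn \ra)/\la \hn \ra$ with $N = \sqrt{2}A_{p-1}$ via the simple current extension machinery of \cite{Shimakura2007}, and then read off the explicit shape $p : (2 \times (p-1))$ directly from Corollary \ref{cor:aut_VNnu_over_nu}. The restriction map $N_{\Aut(V_N)}(\la \hn \ra) \to \Aut(V_N^{\la \hn \ra})$ has kernel exactly $\la \hn \ra$: any automorphism of $V_N$ that normalizes $\la \hn \ra$ and fixes $V_N^{\la \hn \ra}$ pointwise must act on each graded piece $V_N(i)$ by a scalar, and the resulting scalars form a character of $\Z_p$, hence agree with those of some power of $\hn$.

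Surjectivity is the main point, and I would deduce it from Lemma \ref{lem:VNi}. Since $V_N = \bigoplus_{i=0}^{p-1} V_N(i)$ is a $\Z_p$-graded simple current extension of $V_N^{\la \hn \ra}$, it suffices to check that every $g \in \Aut(V_N^{\la \hn \ra})$ permutes the isomorphism classes of the summands. For each $1 \le i \le p-1$, $V_N(i) \circ g$ is again a simple current $V_N^{\la \hn \ra}$-module whose weight one subspace has the same nonzero dimension as that of $V_N(i)$, so Lemma \ref{lem:VNi} forces $V_N(i) \circ g \cong V_N(j)$ for some $1 \le j \le p-1$; the trivial piece $V_N(0) = V_N^{\la \hn \ra}$ is of course fixed. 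This is the step where the preceding case analysis of twisted and untwisted irreducible $V_N^{\la \hn \ra}$-modules pays off, and it is the main obstacle in the argument since all competing candidates for simple current modules with nonzero weight one subspace must be ruled out by the earlier conformal-weight and quantum-dimension obstructions.

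With the permutation property in hand, \cite[Theorem 2.1]{Shimakura2007} extends $g$ to an automorphism of $V_N$ that preserves the $\Z_p$-grading and hence normalizes $\la \hn \ra$, and \cite[Corollary 2.2]{Shimakura2007} yields the desired isomorphism $\Aut(V_N^{\la \hn \ra}) \cong N_{\Aut(V_N)}(\la \hn \ra)/\la \hn \ra$. Finally Corollary \ref{cor:aut_VNnu_over_nu} identifies the right-hand side with $\la \psi \ra : (\la \theta \ra \times \Z_p^\times)$ of shape $p : (2 \times (p-1))$, in which $\la \psi \ra : \la \theta \ra$ is a dihedral group of order $2p$ and $\la \psi \ra : \Z_p^\times$ is a Frobenius group of order $p(p-1)$, completing the proof.
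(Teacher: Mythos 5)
Your proposal is correct and follows essentially the same route as the paper: it invokes Lemma \ref{lem:VNi} to show every $g \in \Aut(V_N^{\la \hn \ra})$ permutes the simple current summands $V_N(i)$, extends $g$ to $V_N$ via \cite[Theorem 2.1]{Shimakura2007}, obtains the quotient isomorphism from \cite[Corollary 2.2]{Shimakura2007}, and reads off the shape from Corollary \ref{cor:aut_VNnu_over_nu}. The only cosmetic difference is that you spell out the kernel computation for the restriction map, which the paper delegates entirely to Shimakura's corollary.
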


\section{Examples}\label{sec:examples}

In this section, 
we discuss $\sigma$-involutions of certain lattice vertex operator algebras.
Those examples illustrate the relationship between $\sigma$-involutions 
and RSSD involutions. 
We also deal with $\sigma$-involutions not related to $V_{\sqrt{2}A_{k-1}}$. 

\subsection{$\sigma$-involutions of $V_{A_{p-1} \otimes R}$}
\label{subsec:lattice_tensor}

Let $p \ge 3$ be an integer, 
and let $R$ be a root lattice of type $A$, $D$, or $E$ of rank $n$. 
We study $\sigma$-involutions of a vertex operator algebra 
$V_{A_{p-1} \otimes R}$ associated with the tensor product 
$A_{p-1} \otimes R$ of $A_{p-1}$ and $R$. 
The tensor product of two lattices $(A, \la \,\cdot\,,\,\cdot\,\ra_A)$ and 
$(B, \la \,\cdot\,,\,\cdot\,\ra_B)$ is by definition the tensor product 
$A \otimes_\Z B$ of $\Z$-modules $A$ and $B$ equipped with 
a symmetric $\Z$-bilinear form 
$\la \,\cdot\,,\,\cdot\,\ra$ defined by
\[
  \la \alpha \otimes \beta, \alpha' \otimes \beta' \ra 
  = \la \alpha, \alpha' \ra_A \cdot \la \beta, \beta' \ra_B 
\]
for $\alpha$, $\alpha' \in A$ and $\beta$, $\beta' \in B$. 
For simplicity of notation, we denote $A \otimes_\Z B$ by $A \otimes B$. 

Let $\vep_1, \ldots, \vep_p$ be the unit vectors in $\R^p$, 
and set $\alpha_i = \vep_i - \vep_{i+1}$ for $1 \le i \le p-1$, 
and set $\alpha_0 = \vep_p - \vep_1$. 
We also take the set $\{ \beta_1, \ldots, \beta_n \}$ of simple roots of $R$. 
Then $\alpha_i \otimes \beta_j$, 
$1 \le i \le p-1$, $1 \le j \le n$, form a $\Z$-basis of $A_{p-1} \otimes R$. 
Note that the symbols $\alpha_i$ and $\beta_j$ here are different from those 
used in Sections \ref{sec:s_VN}, \ref{sec:cent_in_aut_VNnu}, 
and \ref{sec:aut_VNnu}. 

Set $\CA_\beta = A_{p-1} \otimes \Z\beta$ for $\beta \in R$, 
and set $\CA_R = A_{p-1} \otimes R$. 
Then $\CA_\beta \cong \sqrt{2}A_{p-1}$ for $\beta \in R(2)$, 
where $R(2) = \{ \beta \in R \mid \la \beta,\beta \ra = 2\}$. 
The lattice $\CA_R$ is positive definite and even. 
We slightly generalize \cite[Lemma 3.3]{GL2011a} as follows. 

\begin{lemma}\label{lem:min_norm_tensor}
\textup{(1)}
$\la x,x \ra \ge 4$ for $0 \ne x \in \CA_R$.

\textup{(2)} 
$\CA_R(4) = 
\{ \alpha \otimes \beta \mid \alpha \in A_{p-1}(2), \beta \in R(2) \}$.
\end{lemma}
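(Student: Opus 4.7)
The plan is to realize $\CA_R = A_{p-1}\otimes R$ isometrically as a codimension-$n$ sublattice of the orthogonal sum $R^p$, after which both parts of the lemma reduce to the fact that the minimum square norm of $R$ is $2$.

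Using the embedding $A_{p-1} = \{(c_1,\dots,c_p)\in\Z^p : c_1+\cdots+c_p=0\}$ (which is compatible with $\alpha_i = \vep_i - \vep_{i+1}$ and $\alpha_0 = \vep_p - \vep_1$) together with the flatness of the torsion-free $\Z$-module $R$, I would tensor the short exact sequence
\[
0 \longrightarrow A_{p-1} \longrightarrow \Z^p \xrightarrow{\ \sum\ } \Z \longrightarrow 0
\]
with $R$ to identify $\CA_R$ with the $\Z$-submodule $\{(r_1,\dots,r_p)\in R^p : r_1+\cdots+r_p=0\}$ of $R^p$. Since $\la\vep_i,\vep_j\ra=\delta_{ij}$, the tensor inner product on $\Z^p\otimes R$ coincides with the orthogonal-sum inner product on $R^p$, so that $\sum_i \vep_i\otimes r_i$ has square norm $\sum_i \la r_i,r_i\ra$; thus the identification is isometric.

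With this description, given any nonzero $x=(r_1,\dots,r_p)\in\CA_R$, the relation $\sum r_i=0$ forbids exactly one coordinate from being nonzero, so at least two of the $r_i$ are nonzero. Because $R$ is an $ADE$ root lattice, each nonzero $r_i$ satisfies $\la r_i,r_i\ra\ge 2$, whence $\la x,x\ra \ge 4$, proving (1). For equality $\la x,x\ra=4$, exactly two coordinates must be nonzero and each must be a root of $R$; the sum relation then forces $r_i=-r_j=r\in R(2)$, and the corresponding element is $(\vep_i-\vep_j)\otimes r = \alpha\otimes\beta$ with $\alpha\in A_{p-1}(2)$ and $\beta\in R(2)$. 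This gives the inclusion $\subset$ in (2), while the reverse inclusion is immediate from $\la\alpha\otimes\beta,\alpha\otimes\beta\ra = \la\alpha,\alpha\ra\la\beta,\beta\ra = 4$. I do not anticipate a real obstacle; the only point that needs care is checking that the tensor-product identification preserves inner products, which is automatic from the orthonormality of $\{\vep_i\}$.
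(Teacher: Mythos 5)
Your proposal is correct and follows essentially the same route as the paper: both embed $\CA_R$ into the orthogonal sum $\bigoplus_{i=1}^p \Z\vep_i\otimes R\cong R^p$, observe that a nonzero element must have at least two nonzero coordinates, and conclude from the minimum norm $2$ of the $ADE$ root lattice $R$. Your exact-sequence identification of $\CA_R$ with the sum-zero elements of $R^p$ merely makes explicit the step the paper leaves implicit when it asserts $s\ge 2$ and deduces $x=(\vep_{i_1}-\vep_{i_2})\otimes b_{i_1}$ in the equality case.
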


\begin{proof}
We have $\CA_R \subset \bigoplus_{i=1}^p \Z\vep_i \otimes R$,  
and $\la \vep_i \otimes x, \vep_j \otimes y \ra = \delta_{i,j}\la x,y \ra$ 
for $x$, $y \in R$. 
Let $0 \ne x \in \CA_R$. 
Then $x = \sum_{i=1}^p \vep_i \otimes b_i$ for some $b_i \in R$, 
and $\la x,x \ra = \sum_{i=1}^p \la b_i,b_i \ra$. 
Let $i_1,\ldots,i_s$ be the indices $i$ for which $b_i \ne 0$. 
Then $s \ge 2$. 
Since $\la b_i,b_i \ra \ge 2$ for $0 \ne b_i \in R$, the assertion (1) holds.

Suppose $\la x,x \ra = 4$. 
Then $s = 2$ and $\la b_{i_1,} b_{i_1} \ra = \la b_{i_2},b_{i_2} \ra = 2$. 
Since $x \in \CA_R$, we have $x = (\vep_{i_1} - \vep_{i_2}) \otimes b_{i_1}$. 
Thus the assertion (2) holds.
\end{proof}

By a similar argument as in the proof of \cite[Proposition 5.24]{LY2014}, 
we obtain the next lemma.

\begin{lemma}\label{CAbeta_in_CAR}
If $\beta \in R(2)$, then $\CA_\beta$ is RSSD in $\CA_R$.  
Moreover, $t_{\CA_\beta} = 1 \otimes r_\beta$ in $O(\CA_R)$, 
where $t_{\CA_\beta}$ is the RSSD involution associated with $\CA_\beta$,   
and $r_\beta: x \mapsto x - \la x, \beta \ra \beta$ 
is the reflection on $R$ associated with $\beta$.
\end{lemma}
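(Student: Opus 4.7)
The plan is to prove the RSSD property via Lemma \ref{lem:prA}, and then identify $t_{\CA_\beta}$ with $1 \otimes r_\beta$ by comparing their actions on the two orthogonal summands arising naturally from the tensor-product structure of $\CA_R$.

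First I would record the orthogonal decomposition of $\Q\CA_R$ induced by the tensor structure. Writing $\beta^\perp = \{y \in \Q R \mid \la y,\beta \ra = 0\}$, the orthogonal decomposition $\Q R = \Q\beta \oplus \beta^\perp$ tensors up to
\begin{equation*}
  \Q\CA_R = (\Q A_{p-1} \otimes \Q\beta) \oplus (\Q A_{p-1} \otimes \beta^\perp),
\end{equation*}
and this is orthogonal under the form on $\CA_R$. Thus $\Q\CA_\beta = \Q A_{p-1} \otimes \Q\beta$ and $\Q\Ann_{\CA_R}(\CA_\beta) = \Q A_{p-1} \otimes \beta^\perp$, so the projection $\pr_{\CA_\beta^*} : \Q\CA_R \to \Q\CA_\beta$ coincides with $1 \otimes \pi_\beta$, where $\pi_\beta(x) = \tfrac{1}{2}\la x,\beta\ra\,\beta$.

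Next I would verify $2\pr_{\CA_\beta^*}(\CA_R) \subset \CA_\beta$ on a spanning set of pure tensors. For $\alpha \in A_{p-1}$ and $x \in R$,
\begin{equation*}
  2\pr_{\CA_\beta^*}(\alpha \otimes x) = \la x,\beta \ra\, \alpha \otimes \beta \in A_{p-1} \otimes \Z\beta = \CA_\beta,
\end{equation*}
since $\la x,\beta \ra \in \Z$. Because $\CA_R$ is $\Z$-spanned by such pure tensors, Lemma \ref{lem:prA} gives that $\CA_\beta$ is RSSD in $\CA_R$.

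Finally I would identify $t_{\CA_\beta}$ with $1 \otimes r_\beta$ in $O(\CA_R)$. The reflection $r_\beta$ acts as $-1$ on $\Q\beta$ and as $+1$ on $\beta^\perp$, so the endomorphism $1 \otimes r_\beta$ of $\Q\CA_R$ acts as $-1$ on $\Q\CA_\beta$ and $+1$ on $\Q\Ann_{\CA_R}(\CA_\beta)$; this is precisely the defining prescription for $t_{\CA_\beta}$. Since $r_\beta \in O(R)$ preserves $R$, the map $1 \otimes r_\beta$ preserves $\CA_R = A_{p-1} \otimes R$, and both maps coincide on the generating set, hence in $O(\CA_R)$. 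There is no real obstacle; the only point requiring a little care is confirming that the orthogonal complement of $\Q\CA_\beta$ in $\Q\CA_R$ is genuinely $\Q A_{p-1} \otimes \beta^\perp$, which follows from the product form $\la \alpha \otimes x, \alpha' \otimes x' \ra = \la \alpha,\alpha'\ra \la x,x'\ra$ and linearity.
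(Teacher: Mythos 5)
Your proof is correct, and it takes a noticeably different route from the paper's. The paper fixes a simple system $\{\beta_1,\ldots,\beta_n\}$ of $R$ containing $\beta$ and exploits the ADE condition $\la \beta,\beta_j\ra \in \{0,-1\}$: for each generator $\CA_{\beta_j}$ it exhibits $2\CA_{\beta_j}$ inside $\CA_\beta + \Ann_{\CA_R}(\CA_\beta)$ directly (via $\la \beta, \beta+2\beta_j\ra = 0$ when $\la\beta,\beta_j\ra = -1$), and then identifies $t_{\CA_\beta}$ with $1\otimes r_\beta$ by computing both maps on each $\alpha\otimes\beta_j$. You instead invoke the projection criterion of Lemma \ref{lem:prA}, identifying $\pr_{\CA_\beta^*}$ with $1\otimes\pi_\beta$ via the orthogonal splitting $\Q R = \Q\beta\oplus\beta^\perp$, and you identify $t_{\CA_\beta}$ with $1\otimes r_\beta$ conceptually by matching eigenspaces rather than by a generator-by-generator computation. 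Your argument buys some generality: it uses only that $\la\beta,\beta\ra = 2$ and that $R$ is integral (so $\la x,\beta\ra\in\Z$), with no appeal to a simple system or to the bound $\la\beta,\beta_j\ra\ge -1$, so it would apply verbatim to any norm-$2$ vector in any positive definite integral lattice $R$. The one point you flag yourself --- that $\Q\Ann_{\CA_R}(\CA_\beta) = \Q A_{p-1}\otimes\beta^\perp$ --- is indeed the only thing to check carefully, and it follows from the product form together with a dimension count (or positive definiteness), as you indicate.
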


\begin{proof}
Let $\beta \in R(2)$. 
Then we can choose the set $\{ \beta_1, \ldots, \beta_n \}$ of simple roots 
of $R$ containing $\beta$, say, $\beta = \beta_i$. 
Since $R$ is a root lattice of type $A$, $D$, or $E$, 
we have $\la \beta, \beta_j \ra = 0$ or $-1$ for $j \ne i$. 
If $\la \beta, \beta_j \ra = 0$, 
then $\CA_{\beta_j} \subset \Ann_{\CA_R}(\CA_\beta)$. 
If $\la \beta, \beta_j \ra = -1$, then $\la \beta, \beta + 2\beta_j \ra = 0$, 
so $\CA_{2\beta_j} \subset \CA_\beta + \Ann_{\CA_R}(\CA_\beta)$. 
Thus $\CA_\beta$ is RSSD in $\CA_R$. 

Both $t_{\CA_\beta}$ and $1 \otimes r_\beta$ are $-1$ on $\CA_\beta$, 
and $1$ on $\CA_{\beta_j}$ for $\beta_j$ such that $\la \beta, \beta_j \ra = 0$. 
If $\la \beta, \beta_j \ra = -1$, then we consider
\[
  \alpha \otimes \beta_j 
  = \alpha \otimes (- \frac{1}{2}\beta) + \alpha \otimes \frac{1}{2}(\beta + 2\beta_j)
\]
for $\alpha \in A_{p-1}$. 
The first term on the right-hand side of the above equation belongs to 
$\Q \CA_\beta$, and the second term belongs to 
$\Q \Ann_{\CA_R}(\CA_\beta)$, so we have
\begin{equation*}
  \begin{split}
  t_{\CA_\beta} (\alpha \otimes \beta_j) 
  &= - \alpha \otimes (- \frac{1}{2}\beta) + \alpha \otimes \frac{1}{2}(\beta + 2\beta_j)\\
  &= \alpha \otimes (\beta + \beta_j), 
  \end{split}
\end{equation*}
which agrees with $(1 \otimes r_\beta)(\alpha \otimes \beta_j)$. 
Therefore, $t_{\CA_\beta} = 1 \otimes r_\beta$ on $\CA_R$.
\end{proof}

Let $\beta \in R(2)$. 
Then $\CA_\beta \cong \sqrt{2}A_{p-1}$, so the vertex operator algebra 
$V_{\CA_\beta}$ contains a subalgebra $W_\beta \cong K(\mathfrak{sl}_2,p)$ 
which corresponds to $M^0 \subset V_N$ 
in the notation of Section \ref{sec:s_VN}. 
The vertex operator algebra $V_{\CA_R}$ is a $\sigma$-type $W_\beta$-module 
by (1) of Theorem \ref{thm:N_RSSD_L} and Lemma \ref{CAbeta_in_CAR}. 
Let $\sigma_{W_\beta} \in \Aut(V_{\CA_R})$ be the $\sigma$-involution 
associated with $W_\beta$ as in Theorem \ref{thm:sigma-inv}. 
Since $\CA_R(2) = \varnothing$ by Lemma \ref{lem:min_norm_tensor}, 
we have an exact sequence
\begin{equation*}
  1 \longrightarrow N(V_{\CA_R}) \longrightarrow \Aut(V_{\CA_R}) 
  \stackrel{\varphi}{\longrightarrow} O({\CA_R}) \longrightarrow 1 
\end{equation*}
of groups as in \eqref{eq:ex_seq_Aut_VL}. 
Moreover, 
\begin{equation}\label{eq:sigma_and_r}
  \varphi(\sigma_{W_\beta}) = 1 \otimes r_\beta
\end{equation}
by (2) of Theorem \ref{thm:N_RSSD_L} and Lemma \ref{CAbeta_in_CAR}. 

Now, we assume that $p$ is an odd prime. 
Let $\nu \in O(A_{p-1})$ be an isometry of $A_{p-1}$ induced by 
\[
  \nu: \vep_1 \mapsto \vep_2 \mapsto \cdots \mapsto \vep_p 
  \mapsto \vep_1.
\]
Then $\nu(\alpha_i) = \alpha_{i+1}$ for $1 \le i \le p-2$, and 
$\nu(\alpha_{p-1}) = \alpha_0$. 
The isometry $\nu$ is fixed point free on $A_{p-1}$ of order $p$. 
We consider $\nu \otimes 1 \in O(\CA_R)$; 
$\alpha \otimes \beta \mapsto (\nu\alpha) \otimes \beta$, 
which is fixed point free on $\CA_R$ of order $p$. 
For simplicity of notation, we also denote $\nu \otimes 1$ by $\nu$. 
Then the restriction of $\nu \in O(\CA_R)$ to $\CA_\beta$ for $\beta \in R(2)$ 
agrees with $\nu \in O(N)$ in Section \ref{sec:cent_in_aut_VNnu}. 

\begin{lemma}\label{lem:quotient_CAR}
$(1-\nu)\CA_R = ((1-\nu)A_{p-1}) \otimes R$, 
and $\CA_R/(1-\nu)\CA_R \cong p^n$ is elementary abelian of order $p^n$. 
\end{lemma}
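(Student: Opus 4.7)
The plan is to reduce both assertions to the structure of $A_{p-1}/(1-\nu)A_{p-1}$. First, since $\nu$ acts on $\CA_R = A_{p-1} \otimes R$ as $\nu \otimes 1$, for any $\alpha \in A_{p-1}$ and $\beta \in R$ we have $(1-\nu)(\alpha \otimes \beta) = ((1-\nu)\alpha) \otimes \beta$. Because $\CA_R$ is $\Z$-spanned by pure tensors, this gives $(1-\nu)\CA_R \subseteq ((1-\nu)A_{p-1}) \otimes R$, and conversely every $\Z$-generator of the right-hand side is manifestly of this form; thus the two sublattices coincide.

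For the second assertion, I would exploit the flatness of $R$ over $\Z$. Applying $- \otimes_\Z R$ to the short exact sequence
\[
  0 \longrightarrow (1-\nu)A_{p-1} \longrightarrow A_{p-1}
  \longrightarrow A_{p-1}/(1-\nu)A_{p-1} \longrightarrow 0
\]
and combining with the identification from the previous step, we obtain a natural isomorphism
\[
  \CA_R/(1-\nu)\CA_R \cong \bigl(A_{p-1}/(1-\nu)A_{p-1}\bigr) \otimes_\Z R.
\]
Since $\nu$ is fixed point free of odd prime order $p$ on the rank $p-1$ lattice $A_{p-1}$, \cite[Lemma A.1]{GL2011b} (applied with $m = \rank A_{p-1}/(p-1) = 1$) yields $A_{p-1}/(1-\nu)A_{p-1} \cong \Z_p$. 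Therefore
\[
  \CA_R/(1-\nu)\CA_R \cong \Z_p \otimes_\Z R \cong R/pR,
\]
which is elementary abelian of order $p^n$ since $\rank R = n$.

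There is no substantial obstacle: once the $\nu \otimes 1$ form of the action is used, everything reduces to a flatness-tensor calculation together with the known structure of $A_{p-1}/(1-\nu)A_{p-1}$. If a self-contained argument were preferred to the citation, I would verify the latter by noting that $\nu$ acts on $A_{p-1}$ with minimal polynomial $1 + x + \cdots + x^{p-1}$ (since $\nu^i \alpha_1 = \alpha_{1+i}$ for $0 \le i \le p-2$ and $1 + \nu + \cdots + \nu^{p-1} = 0$), so $\det(1 - \nu \mid A_{p-1}) = \pm p$, forcing $[A_{p-1} : (1-\nu)A_{p-1}] = p$ and hence the quotient to be $\Z_p$.
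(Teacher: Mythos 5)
Your proof is correct. The first assertion is handled exactly as in the paper: $(1-\nu)(\alpha\otimes\beta) = ((1-\nu)\alpha)\otimes\beta$ on a spanning set of pure tensors. For the second assertion, however, the paper simply applies \cite[Lemma A.1]{GL2011b} to the rank $n(p-1)$ lattice $\CA_R$ itself, on which $\nu\otimes 1$ is fixed point free of odd prime order $p$, obtaining $p^m$ with $m = \rank\CA_R/(p-1) = n$ (this is exactly the setup of Corollary \ref{cor:C_AutVL_hatg}). You instead tensor the exact sequence $0\to(1-\nu)A_{p-1}\to A_{p-1}\to A_{p-1}/(1-\nu)A_{p-1}\to 0$ with the flat $\Z$-module $R$ and reduce to the rank $p-1$ case, where $A_{p-1}/(1-\nu)A_{p-1}\cong\Z_p$. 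This is a genuinely different, and slightly more informative, route: it produces the canonical identification $\CA_R/(1-\nu)\CA_R\cong R/pR$ rather than only the abstract isomorphism type, and your closing determinant argument ($\det(1-\nu\mid A_{p-1})=\pm p$) makes the whole lemma independent of the citation. One minor point of precision: that determinant computation requires the \emph{characteristic} polynomial of $\nu$, not merely the minimal polynomial; since $\nu$ is fixed point free of order $p$ on a lattice of rank $p-1$, its minimal polynomial $1+x+\cdots+x^{p-1}$ already has degree equal to the rank, so the two coincide and your argument goes through.
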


\begin{proof}
Since $(1-\nu)(\alpha \otimes \beta) = ((1-\nu)\alpha) \otimes \beta$, 
the first assertion holds. 
The second assertion follows from \cite[Lemma A.1]{GL2011b} 
as $p$ is an odd prime.  
\end{proof}

Let $\hn \in \Aut(V_{\CA_R})$ be a lift of $\nu$. 
In order to deal with the restriction of $\hn$ to $V_{\CA_\gamma}$ 
for $\gamma \in R(2)$, 
we recall the definition of $\hn$ in Section \ref{subsec:aut_hatL}. 
For simplicity, we denote $\CA_R$ by $L$. 
Let $\vep : \olL \times \olL \to \Z_2$ be a bilinear form 
satisfying \eqref{eq:cond_vep}, and let 
$\eta : \olL \to \Z_2$ be a quadratic form satisfying
\begin{equation*}
  \eta(\ola + \olb) + \eta(\ola) + \eta(\olb) 
  = \vep(\ola,\olb) + \vep(\overline{\nu\alpha}, \overline{\nu\beta})
  \quad \text{for} \ \alpha, \beta \in L  
\end{equation*}
as in \eqref{eq:def-eta}, 
where $\olL = L/2L$, $\ola = \alpha + 2L$, and $\olb = \beta + 2L$. 
Then  
$e^\alpha \kappa^a e^\beta \kappa^b 
= e^{\alpha+\beta} \kappa^{\vep(\ola,\olb) + a + b}$ 
in $\hat{L}$ as in \eqref{eq:def-prod}, and 
$\hn \in O(\hat{L})$ is given by 
$\hn(e^\alpha\kappa^a) = e^{\nu \alpha} \kappa^{\eta(\ola) + a}$ 
as in \eqref{def-hatg}. 

Since $\la \alpha,\alpha \ra \in 4\Z$ for $\alpha \in \CA_\gamma$, 
the restriction of $\vep$ to 
$\overline{\CA}_\gamma \times \overline{\CA}_\gamma$ 
is an alternating bilinear form, 
where $\overline{\CA}_\gamma = \CA_\gamma + 2L$. 
Note that $2\CA_\gamma \subset \CA_\gamma \cap 2L$.   
Thus there is a quadratic form $\xi : \overline{\CA}_\gamma \to \Z_2$ such that
\begin{equation*}
  \xi(\ola + \olb) + \xi(\ola) + \xi(\olb) = \vep(\ola,\olb) 
  \quad \text{for} \ \alpha, \beta \in \CA_\gamma.
\end{equation*}

By a similar argument as in the proof of Lemma \ref{lem:div_g_Hom}, 
we see that 
$\Hom(\overline{\CA}_\gamma, \Z_2) \to \Hom(\overline{\CA}_\gamma, \Z_2); 
\mu \mapsto \mu + \mu^\nu$ is a $\Z_2$-linear isomorphism, 
where $\mu^\nu(\ola) = \mu(\overline{\nu \alpha})$. 
Hence there is a unique $\mu \in \Hom(\overline{\CA}_\gamma, \Z_2)$ 
such that $\eta = \xi + \xi^\nu + \mu + \mu^\nu$ 
on $\overline{\CA}_\gamma$, 
where $\xi^\nu(\ola) = \xi(\overline{\nu \alpha})$. 
Let $e'^\alpha = e^\alpha \kappa^{(\xi + \mu)(\ola)}$ for 
$\alpha \in \CA_\gamma$. 
Then $e'^\alpha e'^\beta = e'^{\alpha + \beta}$, and 
$\hn(e'^\alpha) = e'^{\nu \alpha}$. 
Therefore, by changing the section $\alpha \mapsto e^\alpha$ with  
$\alpha \mapsto e'^\alpha$ for $\alpha \in \CA_\gamma$, 
we can regard the restriction of $\hn$ to $V_{\CA_\gamma}$ as 
the automorphism $\hn$ of $V_N$ discussed 
in Section \ref{sec:cent_in_aut_VNnu}. 

Let $\rho$ be the Weyl vector of $A_{p-1}$ 
with respect to the set of simple roots $\{ \alpha_1, \ldots, \alpha_{p-1} \}$. 
Then $\la \rho, \alpha_i \ra = 1$ for $1 \le i \le p-1$, 
so it follows that
\begin{equation*}
\la \rho, (1-\nu)\alpha_i \ra = 
  \begin{cases}
  0 &\text{if} \ 1 \le i \le p-2,\\
  p &\text{if} \ i = p-1.
  \end{cases}
\end{equation*}
Hence $\rho \otimes R \subset p((1-\nu)\CA_R)^*$.

\begin{lemma}\label{lem:ker_chi}
For $\gamma \in R$, we have $\rho \otimes \gamma \in p (\CA_R)^*$ 
if and only if $\gamma \in R \cap pR^*$. 
\end{lemma}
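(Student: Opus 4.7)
The plan is to unpack the definition of $p(\CA_R)^*$ directly and reduce the divisibility condition on the tensor element $\rho \otimes \gamma$ to a separate divisibility condition on $\gamma$, exploiting the fact that $\la \rho, A_{p-1} \ra = \Z$.

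First I would rewrite the condition $\rho \otimes \gamma \in p(\CA_R)^*$ as: for every $x \in \CA_R$, $\la \rho \otimes \gamma, x \ra \in p\Z$. Since $\CA_R = A_{p-1} \otimes R$ is $\Z$-spanned by pure tensors $\alpha \otimes \beta$ with $\alpha \in A_{p-1}$ and $\beta \in R$, and the bilinear form factors as
\[
  \la \rho \otimes \gamma, \alpha \otimes \beta \ra = \la \rho, \alpha \ra \la \gamma, \beta \ra,
\]
the condition is equivalent to $\la \rho, \alpha \ra \la \gamma, \beta \ra \in p\Z$ for every $\alpha \in A_{p-1}$ and every $\beta \in R$.

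For the forward direction, I would use that $\rho \in A_{p-1}^*$ satisfies $\la \rho, \alpha_i \ra = 1$ for the simple roots, so the image $\la \rho, A_{p-1} \ra$ is a subgroup of $\Z$ containing $1$, hence equals $\Z$. In particular, taking $\alpha$ with $\la \rho, \alpha \ra = 1$ forces $\la \gamma, \beta \ra \in p\Z$ for every $\beta \in R$, which is the statement $\gamma \in pR^*$. Combined with the hypothesis $\gamma \in R$, this gives $\gamma \in R \cap pR^*$. For the converse, if $\gamma \in R \cap pR^*$, then $\la \gamma, \beta \ra \in p\Z$ for all $\beta \in R$, and since $\la \rho, \alpha \ra \in \Z$, the product $\la \rho, \alpha \ra \la \gamma, \beta \ra$ lies in $p\Z$ for all $\alpha \in A_{p-1}$, $\beta \in R$; this is exactly $\rho \otimes \gamma \in p(\CA_R)^*$.

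There is no real obstacle here; the only thing to double check is that pure tensors $\alpha \otimes \beta$ generate $\CA_R$ as a $\Z$-module (which is immediate from the definition $\CA_R = A_{p-1} \otimes_\Z R$) and that the pairing on $\CA_R$ is the tensor pairing (which is how it was introduced at the start of Section \ref{subsec:lattice_tensor}). So the lemma is essentially a direct consequence of the factorization of the bilinear form together with the surjectivity of $\la \rho, \cdot \ra : A_{p-1} \to \Z$.
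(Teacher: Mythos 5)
Your proof is correct and follows essentially the same route as the paper: both directions reduce to the factorization $\la \rho \otimes \gamma, \alpha \otimes \beta \ra = \la \rho, \alpha \ra \la \gamma, \beta \ra$ on the pure tensors spanning $\CA_R$, with the forward direction using an $\alpha$ (the paper takes a simple root $\alpha_i$) on which $\rho$ pairs to $1$. No gaps.
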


\begin{proof}
If $\gamma \in R \cap pR^*$, 
then $\la \rho \otimes \gamma, \alpha \otimes \beta \ra 
= \la \rho, \alpha \ra \la \gamma, \beta \ra \in p\Z$ 
for $\alpha \in A_{p-1}$ and $\beta \in R$,  
so $\rho \otimes \gamma \in p (\CA_R)^*$. 
Conversely, assume that $\rho \otimes \gamma \in p (\CA_R)^*$. 
Then 
$\la \gamma, \beta \ra 
= \la \rho \otimes \gamma, \alpha_i \otimes \beta \ra \in p\Z$ 
for $\beta \in R$, so $\gamma \in R \cap pR^*$.
\end{proof}

\begin{lemma}\label{lem:R_cap_pRast}
$|(R \cap pR^*)/pR| = p$ if $R = A_n$ with $n+1 \equiv 0 \pmod{p}$ 
or $R = E_6$ with $p = 3$.  
Otherwise, $R \cap pR^* = pR$.
\end{lemma}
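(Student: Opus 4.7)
My plan for proving Lemma \ref{lem:R_cap_pRast} is to reduce the computation of $(R\cap pR^*)/pR$ to a computation of the $p$-torsion of the discriminant group $R^*/R$, which is tabulated classically for root lattices of type ADE.

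The key observation is the following: the map
\[
  \varphi : R \cap pR^* \longrightarrow R^*/R, \qquad x \longmapsto \tfrac{1}{p}x + R
\]
is well defined (since $x\in pR^*$ means $x/p\in R^*$), is a homomorphism of $\Z$-modules, has kernel $pR$ (because $x/p\in R$ iff $x\in pR$), and has image exactly the $p$-torsion subgroup $(R^*/R)[p]$ (since $y+R$ lies in the image iff $py\in R$, iff $py\in R\cap pR^*$). Hence $\varphi$ induces a $\Z$-module isomorphism
\[
  (R\cap pR^*)/pR \;\cong\; (R^*/R)[p].
\]

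With this reduction in hand, the lemma is immediate from the standard description of the discriminant groups of simply-laced root lattices: $A_n^*/A_n\cong \Z/(n+1)\Z$, $D_n^*/D_n$ is a $2$-group of order $4$, $E_6^*/E_6\cong\Z/3\Z$, $E_7^*/E_7\cong\Z/2\Z$, and $E_8^*/E_8=0$. Since $p$ is an odd prime, the $p$-torsion of each of these groups is trivial except when either $R=A_n$ with $p\mid n+1$ (in which case the $p$-torsion of $\Z/(n+1)\Z$ is cyclic of order $p$) or $R=E_6$ with $p=3$ (in which case the whole discriminant group is $\Z/3\Z$). In both of these exceptional cases $|(R^*/R)[p]|=p$, and in all others the $p$-torsion is trivial, giving the dichotomy stated in the lemma.

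The bulk of the proof is therefore conceptual; no step looks genuinely difficult. The one point that deserves care is verifying that $\varphi$ is really surjective onto $(R^*/R)[p]$: given $y\in R^*$ with $py\in R$, one must exhibit an element $x\in R\cap pR^*$ mapping to $y+R$, and the natural choice $x=py$ works because $py\in R$ by hypothesis and $py\in pR^*$ since $y\in R^*$. Once this bookkeeping is made explicit the rest is just invoking the classification of discriminant groups of ADE root lattices.
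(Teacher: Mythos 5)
Your proof is correct and follows essentially the same route as the paper: both arguments reduce the question to the structure of the discriminant group $R^*/R$ and then invoke the classification for ADE root lattices ($A_n^*/A_n\cong\Z_{n+1}$, $E_6^*/E_6\cong\Z_3$, and the remaining cases having order coprime to the odd prime $p$). Your explicit isomorphism $(R\cap pR^*)/pR\cong (R^*/R)[p]$ simply makes precise the step the paper states tersely as ``if $|R^*/R|$ is coprime to $p$, then $R\cap pR^*=pR$,'' so it is a welcome bit of added detail rather than a different method.
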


\begin{proof}

If $|R^*/R|$ is coprime to $p$, then $R \cap pR^* = pR$. 
Since $p$ is an odd prime, $|R^*/R|$ is divisible by $p$ only if 
$R = A_n$ with $n+1 \equiv 0 \pmod{p}$ or $R = E_6$ with $p = 3$. 
In fact, $R^*/R$ is a cyclic group of order $n+1$ if $R = A_n$, 
and a cyclic group of order $3$ if $R = E_6$.  
Thus the assertion holds.
\end{proof}

For $\beta \in R$, define $\psi_\beta \in N(V_{\CA_R})$ by 
\[
  \psi_\beta  = \exp (2\pi\sqrt{-1}(\rho \otimes \beta)(0)/p). 
\] 
Then $\psi_\beta \in C_{N(V_{\CA_R})}(\hn)$ 
by (3) of Theorem \ref{thm:C_AutVL_hatg}. 
A map $R \to C_{N(V_{\CA_R})}(\hn)$; $\beta \mapsto \psi_\beta$ 
is a homomorphism from an additive group to a multiplicative group. 
Its kernel is $R \cap pR^*$ by Lemma \ref{lem:ker_chi}. 
Thus the image $\{\psi_\beta \mid \beta \in R \}$ 
of the homomorphism is isomorphic to $R/(R \cap pR^*)$. 

Let $\beta \in R(2)$. 
It follows from \eqref{eq:sigma_and_r} that
$\sigma_{W_\beta}  \psi_\beta \sigma_{W_\beta} = \psi_\beta^{-1}$.  
Thus $\sigma_{W_\beta}$ and $\psi_\beta$ generate a dihedral group $\Dih_{2p}$  
of order $2p$. 
Note that 
$\psi_\beta^{2i} \sigma_{W_\beta} = \psi_\beta^i \sigma_{W_\beta} \psi_\beta^{-i}$ 
is the $\sigma$-involution associated with $\psi_\beta^i(W_\beta)$ 
for $0 \le i \le p-1$. 

For the set of simple roots $\{ \beta_1, \ldots, \beta_n \}$ of $R$, 
let $W_l = W_{\beta_l}$, $\sigma_l = \sigma_{W_l}$, and $\psi_l = \psi_{\beta_l}$ 
for $1 \le l \le n$.  
The following theorem is a generalization of \cite[Theorem 5.28]{LY2014}. 

\begin{theorem}\label{thm:sec-7-1}
Let $p$ be an odd prime, and let $R$, $\CA_R$, $\hn$, 
$W_l$, $\sigma_l$, and $\psi_l$ for $1 \le l \le n$ be as above. 
Then the subgroup of $C_{\Aut(V_{\CA_R})}(\hn)$ generated by 
$\sigma$-involutions $\psi_l^i \sigma_l$ 
for $i = 0,1$ and $1 \le l \le n$ 
is isomorphic to 

\textup{(1)} $p^{n-1} : \Weyl(R)$ if $R = A_n$ with $n+1 \equiv 0 \pmod{p}$ 
or $R = E_6$ with $p = 3$, 

\textup{(2)} $p^n : \Weyl(R)$ otherwise, 

\noindent
where $n = \rank R$ and $\Weyl(R)$ is the Weyl group of the root system of $R$.
\end{theorem}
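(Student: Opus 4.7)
The plan is to analyze the subgroup $G := \langle \psi_l^i \sigma_l : i \in \{0,1\},\ 1 \le l \le n\rangle$ by pushing it through the projection $\varphi : \Aut(V_{\CA_R}) \to O(\CA_R)$ of \eqref{eq:ex_seq_Aut_VL} (which applies because $\CA_R(2) = \varnothing$ by Lemma \ref{lem:min_norm_tensor}), then computing the kernel and establishing the semidirect product structure. Clearly $G = \langle \sigma_l, \psi_l : 1 \le l \le n\rangle$, since $\sigma_l = \psi_l^0\sigma_l$ and $\psi_l = (\psi_l\sigma_l)\sigma_l^{-1}$. Because $\varphi(\psi_l) = 1$ and $\varphi(\sigma_l) = 1 \otimes r_{\beta_l}$ by \eqref{eq:sigma_and_r}, we obtain $\varphi(G) = 1 \otimes \Weyl(R)$ and a short exact sequence $1 \to K \to G \to \Weyl(R) \to 1$ with $K = G \cap N(V_{\CA_R})$.

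To identify $K$, I would use the conjugation formula $\sigma_l \psi_\gamma \sigma_l^{-1} = \psi_{r_{\beta_l}(\gamma)}$, which follows from $\sigma_l\exp(h(0))\sigma_l^{-1} = \exp((\varphi(\sigma_l)h)(0))$ applied to $h = 2\pi\sqrt{-1}(\rho\otimes\gamma)/p$, together with the fact that $\varphi(\sigma_l)$ acts on $\mathfrak h$ as $1\otimes r_{\beta_l}$. Since $\Weyl(R)$ acts transitively on the Weyl orbits in $R(2)$ and the roots generate $R$ as a $\Z$-module, the normal closure of $\{\psi_l\}$ in $G$ is exactly $\{\psi_\gamma : \gamma \in R\}$. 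A word-rewriting argument, using the conjugation relation to push all $\psi$-factors to the left of any word in the generators, identifies $K$ with this normal closure $\langle \psi_\gamma : \gamma \in R\rangle \cong R/(R \cap pR^*)$. Lemma \ref{lem:R_cap_pRast} then yields $|K| = p^{n-1}$ in case (1) and $|K| = p^n$ in case (2), and $K$ is elementary abelian since it lies in the abelian group $C_{N(V_{\CA_R})}(\hn)$.

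Finally, I would establish the split structure. By Theorem \ref{thm:C_AutVL_hatg}(1), $C_{\Aut(V_{\CA_R})}(\hn) = C_{N(V_{\CA_R})}(\hn) : C_{O(\hat{\CA_R})}(\hn)$, and Proposition \ref{prop:OL_OhatL}(2) gives an isomorphism $C_{O(\hat{\CA_R})}(\hn) \cong C_{O(\CA_R)}(\nu)$. Since $1 \otimes \Weyl(R) \subseteq C_{O(\CA_R)}(\nu)$, this transports to a subgroup $\widetilde{\Weyl(R)} \subseteq C_{O(\hat{\CA_R})}(\hn) \subseteq C_{\Aut(V_{\CA_R})}(\hn)$ isomorphic to $\Weyl(R)$. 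The main obstacle is to verify that $\widetilde{\Weyl(R)} \subseteq G$: writing $\sigma_l = c_l \tilde r_l$ with $c_l \in C_{N(V_{\CA_R})}(\hn)$ and $\tilde r_l \in \widetilde{\Weyl(R)}$ the canonical lift of $r_{\beta_l}$, one must show each correction factor $c_l$ lies in $K$ rather than in the potentially larger group $\Hom(\CA_R/(1-\nu)\CA_R,\Z_p)$ given by Theorem \ref{thm:C_AutVL_hatg}(4). I would do this by exploiting the concrete description of $\sigma_l$: Theorem \ref{thm:theta-sigma} shows $\sigma_l$ restricts to $\theta$ on $V_{\CA_{\beta_l}}$, and the rechoice of section $\alpha\mapsto e'^\alpha$ discussed after Lemma \ref{lem:quotient_CAR} identifies this restriction with the standard $-1$-lift there; comparing the action of $\sigma_l$ and $\tilde r_l$ on test vectors $e^{\rho\otimes\gamma}$ then pins down $c_l \in \langle \psi_\gamma : \gamma \in R\rangle$. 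Granting this, $\tilde r_l = c_l^{-1}\sigma_l \in G$, so $G \supseteq \langle\psi_\gamma,\tilde r_l\rangle = K \rtimes \widetilde{\Weyl(R)}$, and the reverse inclusion is automatic, giving the claimed split extension.
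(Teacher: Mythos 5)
Your proposal follows the same route as the paper's proof: both push the group through $\varphi$ to identify the image as $\Weyl(R)$ via \eqref{eq:sigma_and_r}, identify the translation part with $\{\psi_\beta \mid \beta \in R\} \cong R/(R \cap pR^*)$ inside $C_{N(V_{\CA_R})}(\hn)$, and conclude from Lemma \ref{lem:R_cap_pRast} together with the split-extension structure of $C_{\Aut(V_{\CA_R})}(\hn)$ coming from Proposition \ref{prop:OL_OhatL} and Theorem \ref{thm:C_AutVL_hatg}. The only difference is that you make explicit (and sketch how to close) the verification that the kernel is exactly $\{\psi_\beta \mid \beta \in R\}$ and that the extension splits, a point the paper's much terser proof leaves implicit.
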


\begin{proof}
We have 
\[
  C_{\Aut(V_{\CA_R})}(\hn) \cong C_{N(V_{\CA_R})}(\hn) : C_{O(\CA_R)}(\nu)
\] 
by Proposition \ref{prop:OL_OhatL} and Theorem \ref{thm:C_AutVL_hatg}. 
Since $\psi_l$, $1 \le l \le n$, generate the subgroup 
$\{\psi_\beta \mid \beta \in R \}$ of $C_{N(V_{\CA_R})}(\hn)$ 
isomorphic to $R/(R \cap pR^*)$, 
and since $\varphi(\psi_l^i \sigma_l) = 1 \otimes r_{\beta_l}$ 
by \eqref{eq:sigma_and_r}, the assertion holds 
by Lemma \ref{lem:R_cap_pRast}. 
\end{proof}

\subsection{$\sigma$-involutions of $V_{L_\CC}$}
\label{subsec:lattice_LCC}

We consider $\sigma$-involutions 
of a lattice vertex operator algebra $V_{L_\CC}$, 
where $L_\CC$ is a positive definite even lattice constructed in \cite{ALY2018} 
by using a certain code $\CC$. 
First, we recall the description of $L_\CC$ in \cite[Section 4]{ALY2018}.  
A lattice of more general form $L_{\CC \times \CD}$ was studied 
in \cite{ALY2018}.   
Since we only deal with the case $\CD = \{0\}$, 
we simply write $L_\CC$ for $L_{\CC \times \{0\}}$. 

Let $p \ge 3$ be an odd integer. 
Let $\alpha_1, \ldots, \alpha_p$, $\beta_1, \ldots, \beta_{p-1}$, 
and $N = \sqrt{2}A_{p-1}$ be as in Section \ref{sec:s_VN} with $k = p$. 
Thus $\la \alpha_i, \alpha_j \ra = 2 \delta_{i,j}$, and 
$\beta_i = \alpha_i - \alpha_{i+1}$, $1 \le i \le p-1$, form a $\Z$-basis of $N$. 
Set
\[
  \beta_u = \frac{1}{2} \sum_{i=1}^{p-1} u_i \beta_i \in N^*
\]
for $u = (u_1,\ldots,u_{p-1}) \in \Z^{p-1}$.  
Then
\[
  \la \beta_u,\beta_v \ra = \frac{1}{2} u A v^t \in \frac{1}{2}\Z
\]
for $u$, $v \in \Z^{p-1}$, 
where $A$ is a $(p-1) \times (p-1)$ matrix with $(i,j)$ entry 
$\la \beta_i, \beta_j \ra/2$, that is, the Cartan matrix of type $A_{p-1}$. 
Note that $\la \beta_u,\beta_u \ra \in \Z$. 
Set 
\[
  L(u) = N + \beta_u.
\]
Since $L(u) = L(v)$ if and only if $u \equiv v \pmod{2\Z^{p-1}}$,  
we may write $L(\olu)$ for $L(u)$, 
where $\olu = (\overline{u_1},\ldots,\overline{u_{p-1}}) \in \Z_2^{p-1}$ with 
$\overline{u_i} = u_i + 2\Z$. 

We have $u A v^t \equiv u' A v'^t \pmod{2\Z}$ 
if $u \equiv u'$ and $v \equiv v' \pmod{2\Z^{p-1}}$,  
so we can define an inner product $\olu \cdot \olv$ on $\Z_2^{p-1}$ by 
\begin{equation}\label{eq:ip_K}
  \olu \cdot \olv = u A v^t + 2\Z \in \Z_2.
\end{equation}
Since the determinant of the matrix $A$ is $\det A = p \notin 2\Z$, 
the inner product \eqref{eq:ip_K} on $\Z_2^{p-1}$ is nondegenerate.
 
For $\alpha$, $\beta \in N^*$ with $\alpha - \beta \in N$, 
we have $\la \alpha, \alpha \ra \equiv \la \beta, \beta \ra \pmod{2\Z}$. 
Define $w(\olu) \in \Z$ by
\begin{equation}\label{eq:wt_K}
  w(\olu) = \min\{ \la x,x \ra \mid x \in L(\olu) \} \in \Z.
\end{equation}
Then $w(\olu) \equiv \la \beta_u, \beta_u \ra \pmod{2\Z}$. 
We also define a map $q: \Z_2^{p-1} \to \Z_2$ by 
\begin{equation}\label{eq:q_K}
  q(\olu) = w(\olu) + 2\Z = \frac{1}{2}u A u^t + 2\Z.
\end{equation}
Then $q(\olu + \olv) + q(\olu) + q(\olv) = \olu \cdot \olv$, 
that is, $q$ is a quadratic form on $\Z_2^{p-1}$ with associated bilinear form 
$\olu \cdot \olv$. 
We denote by $\CK$ the vector space $\Z_2^{p-1}$ over $\Z_2$ equipped with 
the inner product $\olu \cdot \olv$ defined in \eqref{eq:ip_K} and 
the quadratic form $q(\olu)$ defined in \eqref{eq:q_K}. 

Let $d$ be a positive integer. 
For $\bu = (u_1,\ldots,u_d) \in (\Z^{p-1})^d$, set
\[
  \beta(\bu) = (\beta_{u_1},\ldots,\beta_{u_d}) \in (N^*)^d, 
\]
where $(N^*)^d$ is an orthogonal sum of $d$ copies of $N^*$. 
Moreover, set
\[
  L(\bu) = N^d + \beta(\bu). 
\]
We also write $L(\olbu)$ for $L(\bu)$, 
where $\olbu = (\overline{u_1},\ldots,\overline{u_d}) \in \CK^d$. 

We extend the nondegenerate inner product $\olu \cdot \olv$ on $\CK$ 
defined in \eqref{eq:ip_K} to a nondegenerate inner product on $\CK^d$ by 
\begin{equation}\label{eq:ip_Kd}
  \olbu \cdot \olbv = \sum_{i=1}^d \overline{u_i} \cdot \overline{v_i}
\end{equation}
for $\olbu = (\overline{u_1},\ldots,\overline{u_d})$, 
$\olbv = (\overline{v_1},\ldots,\overline{v_d}) \in \CK^d$. 
Then $\olbu \cdot \olbv = 2 \la \beta(\bu), \beta(\bv) \ra + 2\Z$. 
Likewise, we extend the map $w : \CK \to \Z$ defined in \eqref{eq:wt_K} to 
a map $\CK^d \to \Z$ by 
\begin{equation}\label{eq:wt_Kd}
  w(\olbu) = \sum_{i=1}^d w(\overline{u_i}).
\end{equation}
Then $w(\olbu) = \min \{ \la x,x \ra \mid x \in L(\olbu) \}$, 
and $w(\olbu) \equiv \la \beta(\bu),\beta(\bu) \ra \pmod{2\Z}$. 
We also define $q : \CK^d \to \Z_2$ by 
\begin{equation}\label{eq:q_Kd}
  q(\olbu) = w(\olbu) + 2\Z,
\end{equation}
which is a quadratic form on $\CK^d$ with associated bilinear form 
$\olbu \cdot \olbv$. 

For a $\Z$-submodule $\CC$ of $\CK^d$, set
\[
L_\CC = \bigcup_{\olbu \in \CC} L(\olbu). 
\]
Then $L_\CC$ is a sublattice of $(N^*)^d$ containing $N^d$. 
We have the following lemma, 
see Corollary 4.4 and Proposition 4.5 of \cite{ALY2018} in more general form.

\begin{lemma}
\textup{(1)} 
$L_\CC$ is integral if and only if $\CC$ is self-orthogonal 
with respect to the inner product defined in \eqref{eq:ip_Kd}. 

\textup{(2)} 
$L_\CC$ is even if and only if $\CC$ is totally isotropic 
with respect to the quadratic form $q$ defined in \eqref{eq:q_Kd}.
\end{lemma}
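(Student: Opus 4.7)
The plan is to reduce both statements to residue computations on the coset representatives $\beta(\bu)$. First I check that $L_\CC$ is a lattice: since $\CC$ is a $\Z$-submodule of $\CK^d$ and the map $\bu \mapsto \beta(\bu)$ is $\Z$-linear into $(N^*)^d$, we have $\beta(\bu + \bv) = \beta(\bu) + \beta(\bv)$, and so
\[
  L(\olbu) + L(\olbv) = N^d + \beta(\bu) + \beta(\bv) = L(\olbu + \olbv) \subset L_\CC.
\]
Thus $L_\CC \subset (N^*)^d$ is closed under addition.

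For (1), I will write arbitrary elements as $x = \alpha + \beta(\bu)$, $y = \gamma + \beta(\bv)$ with $\alpha, \gamma \in N^d$ and $\olbu, \olbv \in \CC$, and expand
\[
  \la x, y \ra = \la \alpha, \gamma \ra + \la \alpha, \beta(\bv) \ra
  + \la \beta(\bu), \gamma \ra + \la \beta(\bu), \beta(\bv) \ra.
\]
The first three summands lie in $\Z$ because $N^d$ is even (in particular integral) and $\la N^d, (N^*)^d \ra \subset \Z$. Hence $L_\CC$ is integral if and only if $\la \beta(\bu), \beta(\bv) \ra \in \Z$ for every pair $\olbu, \olbv \in \CC$. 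By the definition \eqref{eq:ip_Kd}, this is exactly the condition $\olbu \cdot \olbv = 0$, i.e., $\CC$ is self-orthogonal.

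For (2), I note first that evenness of $L_\CC$ formally implies integrality, via the polarization identity $2\la x, y \ra = \la x+y, x+y \ra - \la x, x \ra - \la y, y \ra$; so in both directions (1) already handles the self-orthogonality of $\CC$, and I only need to test whether $\la x, x \ra \in 2\Z$ for $x = \alpha + \beta(\bu) \in L_\CC$. I expand
\[
  \la x, x \ra = \la \alpha, \alpha \ra + 2\la \alpha, \beta(\bu) \ra
  + \la \beta(\bu), \beta(\bu) \ra,
\]
whose first two terms lie in $2\Z$ (the first because $N = \sqrt{2}A_{p-1}$ gives $\la \alpha,\alpha \ra \in 4\Z$ on $N^d$, the second trivially). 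By the definition \eqref{eq:q_Kd} of $q$, we have $\la \beta(\bu), \beta(\bu) \ra + 2\Z = q(\olbu)$, so $L_\CC$ is even if and only if $q(\olbu) = 0$ for all $\olbu \in \CC$, i.e., $\CC$ is totally isotropic with respect to $q$.

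The main obstacle is purely notational bookkeeping: one must keep track of the factor of $2$ appearing in the definition $\olbu \cdot \olbv = 2\la \beta(\bu), \beta(\bv) \ra + 2\Z$, and of the identification $q(\olbu) \equiv \la \beta(\bu), \beta(\bu) \ra \pmod{2\Z}$, together with the chain of inclusions $N^d \subset L_\CC \subset (N^*)^d$ and the fact that $N$ has square norms in $4\Z$ rather than merely $2\Z$. Once these are set up, both parts follow from the two short expansions above; there is no deeper obstruction.
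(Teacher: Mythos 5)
Your proof is correct. The paper itself gives no argument for this lemma, deferring to Corollary 4.4 and Proposition 4.5 of [ALY2018]; your direct verification on coset representatives is the natural one and supplies exactly what that citation hides, correctly using the identities $\olbu \cdot \olbv = 2\la \beta(\bu),\beta(\bv) \ra + 2\Z$ and $q(\olbu) \equiv \la \beta(\bu),\beta(\bu) \ra \pmod{2\Z}$ together with $\la N^d, (N^*)^d \ra \subset \Z$ and the fact that norms on $N^d$ lie in $4\Z$. The two points that need care are both handled: closure of $L_\CC$ under addition (needed for the polarization step reducing evenness to the norm condition) follows from $\CC$ being a subgroup and $\bu \mapsto \beta(\bu)$ being $\Z$-linear, and total isotropy of $q$ on the subgroup $\CC$ automatically yields self-orthogonality since $q$ polarizes to $\olbu \cdot \olbv$.
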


As for $L_\CC(2) = \{ \alpha \in L_\CC \mid \la \alpha, \alpha \ra = 2 \}$, 
we have $L_\CC(2) = \varnothing$ if and only if 
$w(\olbu) \ne 2$ for any $\olbu \in \CC$. 
For example, if the number of nonzero entries $\overline{u_i}$ of 
$\olbu = (\overline{u_1},\ldots,\overline{u_d})$ is at least $4$ for any 
nonzero $\olbu \in \CC$, then $L_\CC(2) = \varnothing$. 

Now, assume that $p$ is an odd prime. 
Let $\nu$ be as in Section \ref{sec:cent_in_aut_VNnu} with $k = p$. 
We extend $\nu$ to an isometry of $(N^*)^d$ diagonally, that is, 
$\nu(x_1,\ldots,x_d) = (\nu x_1, \ldots, \nu x_d)$ for $x_i \in N^*$. 
Then $\nu$ induces a fixed point free action on $\CK^d$ 
which preserves the inner product and the maps $w$ and $q$ defined in 
\eqref{eq:ip_Kd}, \eqref{eq:wt_Kd}, and \eqref{eq:q_Kd}, respectively. 

Suppose $\CC$ is a $\nu$-invariant $\Z$-submodule of $\CK^d$ 
such that $w(\olbu) \in 2\Z$ for any $\olbu \in \CC$ and such that 
$w(\olbu) \ge 4$ if $\olbu \ne 0$. 
Then $L_\CC$ is a positive definite even lattice with $L_\CC(2) = \varnothing$. 
Moreover, $L_\CC$ is invariant under $\nu$, 
and the restriction of $\nu$ to $L_\CC$ is a fixed point free isometry 
of $L_\CC$ of order $p$. 
Let $\hn \in \Aut(V_{L_\CC})$ be a lift of $\nu$. 
Then the following lemma holds by Corollary \ref{cor:C_AutVL_hatg}.

\begin{lemma}
$C_{\Aut(V_{L_\CC})}(\hn) \cong p^d :  C_{O(L_\CC)}(\nu)$.
\end{lemma}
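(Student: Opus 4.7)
The plan is to verify that all the hypotheses of Corollary \ref{cor:C_AutVL_hatg} are satisfied for the lattice $L_\CC$ and the isometry $\nu$, and then read off the desired isomorphism from the exponent $m = \rank L_\CC/(p-1)$.

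First I would observe that the three hypotheses needed are already in hand: by the standing assumption preceding the lemma, $L_\CC$ is a positive definite even lattice with $L_\CC(2) = \varnothing$; by the same assumption (together with $\CC$ being $\nu$-invariant) the restriction of $\nu$ to $L_\CC$ is well-defined, fixed point free on $L_\CC$, and of order $p$; and $p$ is an odd prime. Hence Corollary \ref{cor:C_AutVL_hatg} applies and gives
\[
  C_{\Aut(V_{L_\CC})}(\hn) \;\cong\; p^m : C_{O(L_\CC)}(\nu),
\]
where $m = \rank L_\CC/(p-1)$.

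The only remaining task is to compute $\rank L_\CC$. Since $L_\CC \subset (N^*)^d$ and $L_\CC \supset N^d$, both inclusions of finite index, we have $\rank L_\CC = d \cdot \rank N = d(p-1)$. Therefore $m = d(p-1)/(p-1) = d$, which yields the claimed isomorphism.

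There is essentially no obstacle here; the lemma is a direct specialization of Corollary \ref{cor:C_AutVL_hatg} once the rank computation is recorded. The only mild subtlety worth double checking is that $L_\CC$ really does have full rank $d(p-1)$ inside $(N^*)^d$, which follows from $N^d \subset L_\CC$.
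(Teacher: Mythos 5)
Your proof is correct and follows exactly the paper's route: the paper simply notes that the lemma "holds by Corollary \ref{cor:C_AutVL_hatg}," and your verification of the hypotheses together with the rank computation $\rank L_\CC = d(p-1)$, giving $m = d$, is precisely the content being invoked.
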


We denote by $N_l$ the $l$-th direct summand of $N^d$,    
so $N_l \cong \sqrt{2}A_{p-1}$. 

\begin{lemma}\label{lem:Nl_in_LC}
$N_l$ is RSSD in $L_\CC$ for $1 \le l \le d$.
\end{lemma}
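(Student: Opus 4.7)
The plan is to verify the RSSD condition via Lemma \ref{lem:prA}, i.e., to show that $2\pr_{N_l^*}(L_\CC) \subset N_l$, and this reduces to an almost immediate computation once one identifies the projection explicitly.

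First, I would note that $N^d = N_1 \oplus \cdots \oplus N_d$ is an orthogonal direct sum of copies of $N$, and $L_\CC \subset (N^*)^d = N_1^* \oplus \cdots \oplus N_d^*$. Since $N$ and $N^*$ span the same rational space, we have $\Ann_{(N^*)^d}(N_l) = \bigoplus_{i\neq l} N_i^*$, so $\Q N_l$ and $\Q \Ann_{L_\CC}(N_l)$ are precisely the $l$-th coordinate subspace and the sum of the other coordinate subspaces in $\Q L_\CC$. Consequently the projection $\pr_{N_l^*} : L_\CC \to N_l^*$ is nothing but the extraction of the $l$-th coordinate $(y_1,\ldots,y_d)\mapsto y_l$.

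Next, I would invoke the description of $L_\CC$ as the union of the cosets $L(\olbu) = N^d + \beta(\bu)$ for $\olbu\in\CC$, where $\beta(\bu) = (\beta_{u_1},\ldots,\beta_{u_d})$ with $\beta_{u_l} = \tfrac12\sum_{i=1}^{p-1} (u_l)_i \beta_i \in N^*$. Given any $y \in L_\CC$, write $y = x + \beta(\bu)$ with $x = (x_1,\ldots,x_d) \in N^d$ and $\olbu \in \CC$. Then
\[
  2\pr_{N_l^*}(y) \;=\; 2x_l + 2\beta_{u_l} \;=\; 2x_l + \sum_{i=1}^{p-1} (u_l)_i \beta_i,
\]
and both terms lie in $N = N_l$ (the first because $x_l \in N$, the second because the $\beta_i$ form a $\Z$-basis of $N$). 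Hence $2\pr_{N_l^*}(L_\CC) \subset N_l$, and Lemma \ref{lem:prA} yields the claim.

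There is essentially no obstacle here: the only subtlety is recognizing that the factor $\tfrac12$ appearing in $\beta_{u_l}$ is exactly cancelled when one doubles, which is the whole point of the RSSD criterion in this setting. No use of the code structure, of the quadratic form $q$, or of $\nu$-invariance is needed for this step.
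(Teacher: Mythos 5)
Your proof is correct and rests on the same key observation as the paper's, namely that $2L_\CC \subset N^d$ because doubling $\beta(\bu)$ clears the factors of $\tfrac12$; the paper simply concludes directly from the chain $2L_\CC \subset N^d \subset N_l + \Ann_{L_\CC}(N_l)$ rather than routing through the projection criterion of Lemma \ref{lem:prA}, which is an equivalent formulation. No gap.
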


\begin{proof}
Since $2L_\CC \subset N^d \subset N_l + \Ann_{L_\CC}(N_l)$, 
the assertion holds.
\end{proof}

Let $W_l \cong K(\mathfrak{sl}_2,p)$ be a subalgebra 
of the vertex operator algebra $V_{N_l}$
which corresponds to $M^0 \subset V_N$ 
in the notation of Section \ref{sec:s_VN}. 
Then $V_{L_\CC}$ is a $\sigma$-type $W_l$-module 
by (1) of Theorem \ref{thm:N_RSSD_L}  
and Lemma \ref{lem:Nl_in_LC}. 
Let $\sigma_l = \sigma_{W_l} \in \Aut(V_{L_\CC})$ be the $\sigma$-involution 
associated with $W_l$ as in Theorem \ref{thm:sigma-inv}. 
Then $\varphi(\sigma_l) = t_{N_l}$ by (2) of Theorem \ref{thm:N_RSSD_L}, 
where $t_{N_l} \in O(L_\CC)$ is the RSSD involution associated with $N_l$. 

As mentioned in Section \ref{subsec:lattice_tensor}, 
we can regard the restriction of $\hn \in \Aut(V_{L_\CC})$ to $V_{N_l}$ as 
the automorphism $\hn$ of $V_N$ discussed 
in Section \ref{sec:cent_in_aut_VNnu}. 
Recall the vector $\rho$ considered in Section \ref{sec:cent_in_aut_VNnu}. 
Since $\la \rho, \beta_i \ra = \sqrt{2}$ for $1 \le i \le p-1$, 
we have $\la \sqrt{2}\rho, (1-\nu)\beta_i/2 \ra = 0$ or $p$ according as 
$1 \le i \le p-2$ or $i = p-1$. 
Thus
\begin{equation}\label{eq:rho_in_dual_N}
  \sqrt{2}\rho 
  \in p ((1-\nu)(\frac{1}{2}\Z\beta_1 + \cdots + \frac{1}{2}\Z\beta_{p-1}))^*.
\end{equation}

We denote by $\rho_l \in N_l$ the vector corresponding to $\rho \in N$.
Then $\sqrt{2}\rho_l \in p((1-\nu)L_\CC)^*$ by \eqref{eq:rho_in_dual_N}. 
Let
\begin{equation*}
   \psi_l = \exp(2\pi\sqrt{-1}(\sqrt{2}\rho_l)(0)/p)
\end{equation*}
for $1 \le l \le d$. 
Then $\psi_l \in C_{N(V_{L_\CC})}(\hn)$ by Theorem \ref{thm:C_AutVL_hatg}. 
Since $N_l$ is orthogonal to $N_{l'}$ for $l \ne l'$, we have that 
$\psi_l$, $1 \le l \le d$, generate $C_{N(V_{L_\CC})}(\hn) \cong p^d$. 

Since $\sigma_l \psi_l \sigma_l = \psi_l^{-1}$, we see that 
$\sigma_l$ and $\psi_l$ generate a dihedral group $\Dih_{2p}$ of order $2p$, 
and that $\psi_l^{2i} \sigma_l = \psi_l^i \sigma_l \psi_l^{-i}$ 
is the $\sigma$-involution associated with $\psi_l^i(W_l)$ 
for $0 \le i \le p-1$. 
Therefore, the following theorem holds. 

\begin{theorem}\label{thm:sec-7-2}
Let $p$ be an odd prime, and let $\CC$, $L_\CC$, $\hn$, 
$W_l$, $\sigma_l$, and $\psi_l$ for $1 \le l \le d$ be as above. 
Then the subgroup of $C_{\Aut(V_{L_\CC})}(\hn)$ generated by 
$\sigma$-involutions $\psi_l^i \sigma_l$ 
for $i = 0,1$ and $1 \le l \le d$ 
is isomorphic to $(\Dih_{2p})^d$.
\end{theorem}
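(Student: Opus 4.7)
The plan is to show that the $d$ dihedral subgroups $G_l = \langle \sigma_l, \psi_l \rangle \cong \Dih_{2p}$ for $1 \le l \le d$ pairwise commute and have pairwise trivial intersection, so that the subgroup they generate is the internal direct product $G_1 \times \cdots \times G_d \cong (\Dih_{2p})^d$. That each $G_l$ is dihedral of order $2p$ is already established just above the statement. The elements $\psi_1,\ldots,\psi_d$ all lie in the abelian group $N(V_{L_\CC})$, so they commute with each other. For $l \ne l'$, the conjugate $\sigma_l \psi_{l'} \sigma_l^{-1}$ equals $\exp(2\pi\sqrt{-1}(\varphi(\sigma_l)(\sqrt{2}\rho_{l'}))(0)/p)$, and $\varphi(\sigma_l) = t_{N_l}$ fixes $\sqrt{2}\rho_{l'}$ because $\rho_{l'} \in \Q N_{l'} \subset \Q\Ann_{L_\CC}(N_l)$ by the mutual orthogonality of the $N_l$'s. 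Hence $\sigma_l \psi_{l'} = \psi_{l'} \sigma_l$.

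The main obstacle is to establish $\sigma_l \sigma_{l'} = \sigma_{l'} \sigma_l$ for $l \ne l'$. My plan is to exploit the tensor-product structure forced by the orthogonality $N_l \perp N_{l'}$: then $W_l \subset V_{N_l}$ and $W_{l'} \subset V_{N_{l'}}$ are mutually commuting subalgebras of $V_{L_\CC}$, together generating a subalgebra isomorphic to $W_l \otimes W_{l'}$. By Theorem \ref{thm:N_RSSD_L} and Lemma \ref{lem:Nl_in_LC}, $V_{L_\CC}$ is of $\sigma$-type both as a $W_l$-module and as a $W_{l'}$-module, so complete reducibility of $V_{L_\CC}$ as a $W_l \otimes W_{l'}$-module forces every irreducible constituent to have the form $M^{2j,j}_l \otimes M^{2j',j'}_{l'}$. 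Refining each such constituent under the respective $\theta$-involutions produces joint eigenspaces $(M^{2j,j}_l)^\epsilon \otimes (M^{2j',j'}_{l'})^{\epsilon'}$ on which $\sigma_l$ acts by the scalar $(-1)^{j+\epsilon}$ and $\sigma_{l'}$ by $(-1)^{j'+\epsilon'}$. Since scalars commute, so do $\sigma_l$ and $\sigma_{l'}$ on every summand, hence on all of $V_{L_\CC}$.

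Finally, to promote pairwise commutativity into a direct product decomposition, I will apply $\varphi : \Aut(V_{L_\CC}) \to O(L_\CC)$. The image $\varphi(G_l) = \langle t_{N_l} \rangle$, and the $d$ involutions $t_{N_l}$ are independent in $O(L_\CC)$ because the restriction of $t_{N_l}$ to $N_{l'}$ is $(-1)^{\delta_{l,l'}}$. Combined with the independence of $\psi_1, \ldots, \psi_d$ inside $C_{N(V_{L_\CC})}(\hn) \cong p^d$ noted in the preceding discussion, this forces $G_l \cap \prod_{l' \ne l} G_{l'} = \{1\}$. Therefore the subgroup in question has order $(2p)^d$ and is isomorphic to $(\Dih_{2p})^d$.
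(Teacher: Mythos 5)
Your proof is correct and follows the same route the paper (implicitly) takes: the theorem there is deduced directly from the preceding observations that each $\la \sigma_l, \psi_l \ra \cong \Dih_{2p}$ and that $\psi_1,\ldots,\psi_d$ independently generate $C_{N(V_{L_\CC})}(\hn) \cong p^d$, with independence of the factors read off from $\varphi$. Your verification that $\sigma_l$ and $\sigma_{l'}$ commute for $l \ne l'$ — via the decomposition of $V_{L_\CC}$ under the mutually commuting subalgebras $W_l$ and $W_{l'}$, on whose joint isotypic components both act as scalars — is the one genuinely nontrivial point the paper leaves unstated, and your argument for it is sound.
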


A possible way to obtain more $\sigma$-involutions is the use of the action of $O(L_\CC)$. 
Let $g \in O(L_\CC)$. 
Then $g(N_l) \cong \sqrt{2}A_{p-1}$ is RSSD in $L_\CC$ by Lemma \ref{lem:Nl_in_LC}. 
So a vertex operator subalgebra $W$ of $V_{g(N_l)}$ corresponding to $M^0 \subset V_N$ 
is a $\sigma$-type parafermion vertex operator subalgebra of $V_{L_\CC}$ 
by Theorem \ref{thm:N_RSSD_L}. 
Hence we can consider the $\sigma$-involution $\sigma_W$ of $V_{L_\CC}$ associated with $W$. 
Moreover, $\varphi(\sigma_W) = t_{g(N_l)} \in O(L_\CC)$ is the RSSD involution 
associated with $g(N_l)$. 
Since $h t_{g(N_l)} h^{-1} = t_{hg(N_l)}$ for $h \in O(L_\CC)$, the group 
$\la t_{g(N_l)} \mid g \in O(L_\CC) \ra$ generated by $t_{g(N_l)}$ for $g \in O(L_\CC)$ 
is a normal subgroup of $O(L_\CC)$. 

Assume that $g \in N_{O(L_\CC)}(\la \nu \ra)$. 
Then $g(N_l)$ is $\nu$-invariant, and $t_{g(N_l)}$ commutes with $\nu$. 
Thus $\la t_{g(N_l)} \mid g \in N_{O(L_\CC)}(\la \nu \ra) \ra$ is a normal subgroup of 
$C_{O(L_\CC)}(\nu)$. 
In this case, we also have $W \subset V_{L_\CC}^{\la \hat{\nu} \ra}$. 
We will discuss such an example in Section \ref{subsec:nonstandard} below.

\subsection{Non-standard $\sigma$-involutions and 
$\mathrm{Aut}(V_{L_\mathcal{C}}^{\la \hat{\nu} \ra})$}\label{subsec:nonstandard}

As shown in Theorem \ref{thm:N_RSSD_L}, 
one can associate a $\sigma$-type parafermion vertex operator subalgebra  
$K(\mathfrak{sl}_2,p) \subset V_L$ for an integer $p \ge 3$ 
with an RSSD sublattice $\sqrt{2}A_{p-1}$ 
of a positive definite even lattice $L$. 
For the orbifold $V_{L_\mathcal{C}}^{\la \hat{\nu} \ra}$ of a lattice vertex operator algebra 
$V_{L_\mathcal{C}}$ discussed in Section \ref{subsec:lattice_LCC}, 
it may also contain some special $\sigma$-type parafermion vertex operator algebras 
which are not obtained from RSSD sublattices isometric to $\sqrt{2}A_{p-1}$. 
In \cite{LY2014}, a special case in which $p=3$ and $L_\CC \cong K_{12}$, 
the Coxeter-Todd lattice of rank $12$, has been studied in detail. 
It turns out that a special $\sigma$-type parafermion vertex operator subalgebra 
$K(\mathfrak{sl}_2,3) \cong W_3(4/5)$ is related to some extra automorphism 
of the orbifold $V_{K_{12}}^{\la \hat{\nu} \ra}$ 
which cannot be extended to an automorphism of $V_{K_{12}}$. 
In this section, we discuss another special case 
in which $p=5$ and $L_{\mathcal{C}}$ is a coinvariant sublattice 
of the Leech lattice $\Lambda$ associated with a $5B$ element  
of $O(\Lambda) = Co_0$.

\subsubsection{Rank $16$ lattice $L_\CC$}

From now on, let $p=5$. 
We use the notation in Section \ref{subsec:lattice_LCC} with $p=5$ and $d=4$. 
Thus $N = \spn_\Z\{ \beta_1, \beta_2, \beta_3, \beta_4\} \cong \sqrt{2}A_4$ 
and $\CK \cong \Z_2^{4}$. 
The fixed point free isometry $\nu$ of order $5$ acts as $\nu \beta_i = \beta_{i+1}$, 
where $i$ is considered to be modulo $5$ and 
$\beta_0 = - (\beta_1 + \beta_2 + \beta_3 + \beta_4)$. 
The isometry $\nu$ also acts on $\CK$ as 
\[
  (i_1,i_2,i_3,i_4) \mapsto (i_4, i_1+i_4, i_2+i_4, i_3+i_4).
\]

Set 
\begin{equation}\label{eq:lmd_p5}
  \lambda=\frac{1}5(\beta_1 +2\beta_2+ 3\beta_3 +4\beta_4), 
\end{equation}
which is $2\lambda_k$ with $k = 5$ in the notation of \eqref{eq:def_lambda_p}. 
Then $\lambda \in N^*$ and $\lambda+N$ generates an order $5$ subgroup of $N^*/N$. 
Note that $\la \lambda, \beta_i \ra = 0$ for $i=1,2,3$, 
$\la \lambda, \beta_4 \ra = 2$, and $\la \lambda, \lambda \ra = 8/5$. 
We also have $\nu^i \lambda = \lambda + \beta_0 + \cdots + \beta_{i-1}$ 
for $1 \le i \le 4$. 
In particular, the coset $\lambda+N$ is fixed by $\nu$. 
We extend $\nu$ to an isometry of $(N^*)^4$ diagonally.  

Now, consider a $\Z$-submodule $\CC$ of $\CK^4$ generated by 
\begin{equation*}
\begin{split}
[(1,0,0,0), (1,0,0,0), (1,0,0,0), (1,0,0,0)],\\
[(0,1,0,0), (0,1,0,0), (0,1,0,0), (0,1,0,0)],\\
[(0,0,1,0), (0,0,1,0), (0,0,1,0), (0,0,1,0)],\\
[(0,0,0,1), (0,0,0,1), (0,0,0,1), (0,0,0,1)],\\
[(1,0,0,0), (0,0,1,1), (1,0,1,1), (0,0,0,0)],\\
[(0,1,0,0), (1,1,1,0), (1,0,1,0), (0,0,0,0)],\\
[(0,0,1,0), (0,1,1,1), (0,1,0,1), (0,0,0,0)],\\
[(0,0,0,1), (1,1,0,0), (1,1,0,1), (0,0,0,0)].
\end{split}
\end{equation*} 

Let $c_1, \ldots, c_8$ be those eight elements of $\CC$  
from the top to the bottom in order. Then $\nu$ acts on $\CC$ as
\begin{gather*}
  c_1 \mapsto c_2 \mapsto c_3 \mapsto c_4 \mapsto c_1+c_2+c_3+c_4 \mapsto c_1,\\
  c_5 \mapsto c_6 \mapsto c_7 \mapsto c_8 \mapsto c_5+c_6+c_7+c_8 \mapsto c_5. 
\end{gather*}
Thus $\CC$ is $\nu$-invariant,  
so the lattice $L_\CC$ is invariant under $\nu$. 
We have $|\CC|=2^8$, and $\CC$ is self-dual with respect to the inner product on 
$\CK^4$ defined as in \eqref{eq:ip_Kd}. 
Hence the dual lattice of $L_\CC$ is 
$(L_\CC)^* = (L_{\CC \times \0})^* = L_{\CC \times \Z_5^4}$ by  
\cite[Proposition 4.3]{ALY2018}, and $(L_\CC)^*/L_\CC \cong 5^4$. 
By a direct calculation, we can verify that $w(c) \in \{ 0, 4, 6, 8 \}$ for $c \in \CC$, 
where $w(c)$ is defined as in \eqref{eq:wt_Kd} for $c \in \CK^4$. 
The number of $c \in \CC$ with given value of $w(c)$ is as follows.
\begin{equation}
\begin{array}{rcccc}
w(c): & 0 \ & 4 & 6 & \ 8\\
\ & 1 \ & 130 & 120 & \ 5
\end{array}
\end{equation}

In particular, $L_\CC(2) = \varnothing$ and $L_\CC(4) \ne \varnothing$. 
Therefore, the next lemma holds.

\begin{lemma}\label{lem:LCC_p5}
$L_\CC$ is a positive definite even lattice of rank $16$ 
with $L_\CC(2) = \varnothing$ and $L_\CC(4) \ne \varnothing$   
such that $(L_\CC)^*/L_\CC \cong 5^4$. 
Moreover, $L_\CC$ is invariant under $\nu$, and
the restriction of $\nu$ to $L_\CC$ is a fixed point free isometry of order $5$.
\end{lemma}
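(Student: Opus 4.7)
The plan is to verify each of the six claims directly from the data assembled immediately before the lemma; the ``proof'' amounts to bookkeeping rather than new work.

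First, rank and positive definiteness are immediate from $N^4 \subset L_\CC \subset (N^*)^4$ with $\rank N = 4$ and the positive definite form inherited from $(N^*)^4$. For evenness I would invoke the unlabeled lemma right after \eqref{eq:q_Kd}, which reduces evenness of $L_\CC$ to the vanishing of the quadratic form $q$ on $\CC$. Since $q(c) = w(c) + 2\Z$ and the displayed enumerator gives $w(c) \in \{0,4,6,8\}$ for all $c \in \CC$, the form $q$ vanishes on $\CC$ and $L_\CC$ is even. The norm conditions $L_\CC(2) = \varnothing$ and $L_\CC(4) \neq \varnothing$ follow from the same enumerator via the identity $w(\olbu) = \min\{\la x,x \ra \mid x \in L(\olbu)\}$: no weight-$2$ codeword forces $L_\CC$ to have no norm-$2$ vector (and $L(0) = N^4$ has minimum $4$ since $N \cong \sqrt{2}A_4$), while the $130$ weight-$4$ codewords furnish norm-$4$ vectors. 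The discriminant isomorphism $(L_\CC)^*/L_\CC \cong 5^4$ is already recorded above from $(L_\CC)^* = L_{\CC \times \Z_5^4}$ via \cite[Proposition 4.3]{ALY2018}.

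For the $\nu$-assertions, the displayed orbits $c_1 \mapsto c_2 \mapsto c_3 \mapsto c_4 \mapsto c_1+c_2+c_3+c_4 \mapsto c_1$, together with the analogous one on $\{c_5, \ldots, c_8\}$, show $\nu(\CC) = \CC$; combined with $\nu(N^4) = N^4$ this yields $\nu(L_\CC) = L_\CC$. Fixed-point freeness and order $5$ on $L_\CC$ are inherited from $\nu$ being fixed point free of order $5$ already on $N$, hence acting diagonally with these properties on $(N^*)^4 \supset L_\CC$.

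The only step requiring genuine calculation is the weight enumerator of the $2^8 = 256$ elements of $\CC$, which I would produce directly from the eight listed generators using the additivity of $w$ via \eqref{eq:wt_Kd} and the explicit values $w(\overline{u})$ for the sixteen elements of $\CK$. I do not anticipate any conceptual obstacle; the verification is essentially mechanical, and the fact that $\CC$ is $\nu$-invariant means the enumeration only needs to be carried out on $\nu$-orbit representatives.
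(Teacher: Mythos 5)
Your proposal is correct and follows the same route as the paper: the paper likewise derives the lemma from the $\nu$-orbit structure of the generators $c_1,\dots,c_8$, the self-duality of $\CC$ (giving $(L_\CC)^*/L_\CC\cong 5^4$ via \cite[Proposition 4.3]{ALY2018}), and the direct computation of the weight enumerator $w(c)\in\{0,4,6,8\}$, which simultaneously yields evenness, $L_\CC(2)=\varnothing$, and $L_\CC(4)\ne\varnothing$. Your explicit remark that the zero coset $N^4$ contributes no norm-$2$ vectors because $N\cong\sqrt{2}A_4$ has minimal norm $4$ is a small point the paper leaves implicit, but otherwise the two arguments coincide.
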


We have $(1-\nu) (L_\CC)^* = L_\CC$ as $(1-\nu)\lambda = -\beta_0$, 
see \cite[Remark 5.2]{ALY2018}, 
and $L_\CC/(1-\nu)L_\CC \cong 5^4$ by \cite[Lemma A.1]{GL2011b}. 
Thus $L_\CC/(1-\nu)L_\CC \cong (L_\CC)^*/L_\CC$ in this case. 

Up to the action of $\nu$ on $\CK^4$ 
and the permutation of the four direct summands $\CK$'s of $\CK^4$, 
there are four types of the $130$ elements $c$ of $\CC$ with $w(c) = 4$, namely, 

\begin{tabular}{rl}
I. & $[(1,0,0,0), (1,0,0,0),(1,0,0,0),(1,0,0,0)]$,\\ 
II. & $[(1,1,0,0), (1,1,0,0), (1,1,0,0), (1,1,0,0)]$,\\
III. & $[(1,0,0,0), (0,0,1,1), (1,0,1,1), (0,0,0,0)]$,\\
IV. &$[(1,1,0,0), (0,1,1,1), (1,1,1,1), (0,1,0,0)]$. 
\end{tabular}

The number of $c \in \CC$ with $w(c) = 4$ of those four types is as follows.

\begin{equation}
\begin{array}{rcccc}
\textrm{type}: & \textrm{I}  \ & \textrm{II} \ & \textrm{III} \ & \textrm{IV}\\
\ & 5 \ & 5 \ & 60 \ & 60
\end{array}
\end{equation}

More precisely, the five elements of either type I or type II belong to a single 
$\la \nu \ra$-orbit, while the $60$ elements of either type III or type IV are divided into 
$12$ $\la \nu \ra$-orbits. 
We denote by $\CK_l$ the $l$-th direct summand of $\CK^4$ for $1 \le l \le 4$, and   
consider the action of an alternating group $\mathrm{Alt}_4$ of degree $4$ on the set 
$\{ \CK_1, \CK_2, \CK_3, \CK_4 \}$ of those direct summands. 
This action induces an action of $\mathrm{Alt}_4$ on $\CC$. 
In fact, the $\mathrm{Alt}_4$-orbit containing 
\[
  [(1,0,0,0), (0,0,1,1), (1,0,1,1), (0,0,0,0)]
\]
has $12$ elements of $\CC$, which form a complete set of representatives of the 
$12$ $\la \nu \ra$-orbits of the type III elements. 
Since the action of $\mathrm{Alt}_4$ commutes with the action of $\nu$, 
the $60$ type III elements comprise an orbit under the action of 
$\la \nu \ra \times \mathrm{Alt}_4$. 
Likewise, the $60$ type IV elements comprise an orbit under the action of 
$\la \nu \ra \times \mathrm{Alt}_4$. 
Note that $\mathrm{Alt}_4$ acts trivially on the type I and the type II elements. 

Let $N_l$ be the $l$-th direct summand of $N^4$. 
Then $N^4$ is an orthogonal sum of $N_l$ for $1 \le l \le 4$. 
We consider the action of $\mathrm{Alt}_4$ on the set $\{ N_1^*, N_2^*, N_3^*, N_4^* \}$.  
Since $\CC$ is invariant under the action of $\mathrm{Alt}_4$ on $\CK^4$, 
this action induces an action of $\mathrm{Alt}_4$ on the lattice $L_\CC$. 
Thus the isometry group $O(L_\CC)$ of the lattice $L_\CC$ contains 
$\la \nu \ra \times \mathrm{Alt}_4$.

\begin{remark}\label{rmk:A41} 
Recall the notation $\beta(\bu) \in (N^*)^d$ for $\bu \in (\Z^{p-1})^d$ 
in Section \ref{subsec:lattice_LCC}. 
Let $c \in \CC$ with $w(c) = 4$. 
We consider $c$ to be an element of $\{0,1\}^{16}$, and define $\beta(c)$ 
as $\beta(\bu)$ for $\bu = c$ with $p = 5$ and $d = 4$. 
Then $\beta(c) \in L_\CC$ and $\la \beta(c),\beta(c) \ra = 4$. 
Denote by $A(\beta(c))$ the sublattice of $L_\CC$ spanned by 
$\nu^i(\beta(c))$ for $0 \le i \le 3$. 
Since $\nu$ is fixed point free of order $5$, 
we have $1 + \nu + \nu^2 + \nu^3 + \nu^4 = 0$ on $L_\CC$. 
Thus $A(\beta(c))$ is the $\nu$-invariant sublattice generated by $\beta(c)$. 
We have $\la \beta(c), \nu(\beta(c)) \ra = -2$, $0$, $-2$, or $-1$ according as 
$c$ is of type I, II, III, or IV, respectively. 
Therefore, $A(\beta(c)) \cong \sqrt{2}A_4$ if $c$ is of type I, II, or III, 
and $A(\beta(c)) \cong A_4(1)$ if $c$ is of type IV by \cite[Lemma D.20]{GL2011a}, 
where $A_4(1)$ is a rank $4$ lattice having the Gram matrix 
\begin{equation*}
  \begin{pmatrix}
  4 &-1 &-1 &-1\\
  -1 &4 &-1 &-1\\
  -1 &-1 &4 &-1\\
  -1 &-1 &-1& 4\\
  \end{pmatrix}. 
\end{equation*}
\end{remark}

\subsubsection{Dihedral $\sqrt{2}E_8$-pairs}

We show that $L_\CC$ can be realized as a sum of two sublattices 
isometric to $\sqrt{2}E_8$ \cite{GL2011a}.
Let $\gamma = \beta_1$ and $\delta = \beta_3+\beta_4$. Then 
$\langle \gamma, \gamma \rangle = \langle \delta, \delta \rangle = 4$ 
and $\langle \gamma, \delta \rangle=0$.  
Let $F = (\Z\gamma \oplus \Z\delta)^4 \subset N^4$. 
Then $F \cong \sqrt{2}A_1^8$.
Define   
\[
  M= \spn_\Z \{ F, \frac{1}{2}(\gamma, \gamma, \gamma, \gamma), 
  \frac{1}{2}(\delta, \delta, \delta, \delta), 
  \frac{1}{2}(\gamma, \delta, \gamma+\delta, 0), 
  \frac{1}{2}(\delta, \gamma+\delta, \gamma, 0) \}, 
\]
and $M'=\nu^2(M)$. 
We can express the generators of $M/F \cong \Z_2^4$ as elements of $\Z_2^8$ 
in the following manner.
\begin{align*}
\frac{1}{2}(\gamma, \gamma, \gamma, \gamma) + F \ 
& \leftrightarrow \ (1,1,1,1,0,0,0,0),\\
\frac{1}{2}(\delta, \delta, \delta, \delta) + F \ 
& \leftrightarrow \ (0,0,0,0,1,1,1,1),\\
\frac{1}{2}(\gamma, \delta, \gamma+\delta, 0) + F \ 
& \leftrightarrow \ (1,0,1,0,0,1,1,0),\\
\frac{1}{2}(\delta, \gamma+\delta, \gamma, 0) + F \ 
& \leftrightarrow \ (0,1,1,0,1,1,0,0). 
\end{align*}

Since $\la \gamma/2, \gamma/2 \ra = \la \delta/2, \delta/2 \ra = 1$, 
the symmetric $\Z$-bilinear form $\la \,\cdot\,,\,\cdot\,\ra$ 
on the lattice $(N^*)^4$ induces the standard inner product on $2F^*/F \cong \Z_2^8$, 
and $M/F$ defines a self-dual $\Z_2$-code with minimal weight $4$ in $\Z_2^8$. 
That is, $M/F$ is isomorphic to the $[8,4,4]$ extended Hamming code.  
Therefore, $M$ is isometric to $\sqrt{2}E_8$. 

Note that $F + \nu^2(F) = N^4$. 
Using the elements $c_1, \ldots, c_8$ of $\CC$, we have 
\begin{align*}
\frac{1}{2}(\gamma, \gamma, \gamma, \gamma) + N^4 
&= c_1, 
&\frac{1}{2}(\delta, \delta, \delta, \delta) + N^4 
&= c_3+c_4,\\
\frac{1}{2}(\gamma, \delta, \gamma+\delta, 0) + N^4 
&= c_5,
&\frac{1}{2}(\delta, \gamma+\delta, \gamma, 0) + N^4 
&=c_7+c_8.
\end{align*}

Since $c_1$, $c_3+c_4$, $c_5$, $c_7+c_8$, 
$\nu^2(c_1)$, $\nu^2(c_3+c_4)$, $\nu^2(c_5)$, and $\nu^2(c_7+c_8)$ 
generate $\CC$, it follows that $L_\CC = M+M'$. 
Thus the next lemma holds.

\begin{lemma}\label{lem:LCC_EE8} 
$L_\CC=M+M'$ with $M\cong M'\cong \sqrt{2}E_8$ and $M \cap M' = 0$. 
\end{lemma}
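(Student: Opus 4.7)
The plan is to establish the four assertions in order: (i) $M \cong \sqrt{2}E_8$, (ii) $M' \cong \sqrt{2}E_8$, (iii) $L_\CC = M + M'$, and (iv) $M \cap M' = 0$. For (i), I would rely on the preceding paragraph, which identifies $F$ with $\sqrt{2}A_1^8$ and observes that $\la\,\cdot\,,\,\cdot\,\ra$ descends to the standard inner product on $2F^*/F \cong \Z_2^8$, since $\la\gamma/2,\gamma/2\ra = \la\delta/2,\delta/2\ra = 1$. The four displayed generators of $M/F$ correspond to weight-$4$ codewords in $\Z_2^8$, and a direct enumeration of their $16$ linear combinations shows that they span a $4$-dimensional self-dual code with weight enumerator $1 + 14x^4 + x^8$; this is the $[8,4,4]$ extended Hamming code. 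Applying Construction A to $\sqrt{2}A_1^8$ with this code is the standard realization of $\sqrt{2}E_8$, hence $M \cong \sqrt{2}E_8$. Assertion (ii) then follows immediately since $\nu^2$ is an isometry of $(N^*)^4$ and $M' = \nu^2(M)$.

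For (iii), the key preliminary observation is $F + \nu^2(F) = N^4$. Using $\gamma = \beta_1$, $\delta = \beta_3 + \beta_4$, and $\nu^2\beta_i = \beta_{i+2}$ (indices modulo $5$, with $\beta_5 = -(\beta_1+\beta_2+\beta_3+\beta_4)$), the $\Z$-span of $\{\gamma, \delta, \nu^2\gamma, \nu^2\delta\}$ is seen to contain $\beta_1$, $\beta_3$, $\beta_3+\beta_4$, and $\beta_5+\beta_1$, from which $\beta_1, \beta_2, \beta_3, \beta_4$ are all recovered; this gives the equality summand by summand. Granted $F + \nu^2(F) = N^4$, the coset identifications displayed in the lemma together with their $\nu^2$-translates reduce the problem to showing that $c_1$, $c_3+c_4$, $c_5$, $c_7+c_8$, $\nu^2(c_1)$, $\nu^2(c_3+c_4)$, $\nu^2(c_5)$, and $\nu^2(c_7+c_8)$ span $\CC$ over $\Z_2$. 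Using the displayed $\nu$-action on $\CC$, one computes $\nu^2(c_1) = c_3$, $\nu^2(c_5) = c_7$, $\nu^2(c_3+c_4) = c_2+c_3+c_4$, and $\nu^2(c_7+c_8) = c_6+c_7+c_8$; a routine elimination then recovers each of $c_1,\ldots, c_8$ individually, so the span is all of $\CC$ and $L_\CC = M + M'$.

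Finally, (iv) is a rank count: $\rank M = \rank M' = 8$ and $\rank(M + M') = \rank L_\CC = 16$ force $\rank(M \cap M') = 0$, whence $M \cap M' = 0$. The only subtle step is (iii): although $|M/F| = |M'/\nu^2(F)| = 16$ and $|\CC| = 256$, the natural map $(M/F) \oplus (M'/\nu^2(F)) \to (M + M')/N^4$ need not be injective, so one cannot simply match orders; the spanning of $\CC$ must be verified directly, as above.
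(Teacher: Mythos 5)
Your proof is correct and follows essentially the same route as the paper, which establishes the lemma through the discussion immediately preceding it: identifying $M/F$ with the $[8,4,4]$ extended Hamming code to get $M \cong \sqrt{2}E_8$, noting $F + \nu^2(F) = N^4$, and checking that $c_1$, $c_3+c_4$, $c_5$, $c_7+c_8$ together with their $\nu^2$-images generate $\CC$. You merely supply details the paper leaves implicit (the weight enumerator, the explicit elimination recovering $c_1,\ldots,c_8$, and the rank count for $M \cap M' = 0$), all of which check out.
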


Uniqueness of such a lattice as $L_\CC$ is known. 
In fact, the following theorem holds \cite[Theorem 4.3]{GL2011b}, 
see also \cite[Corollary 7.17]{GL2011a}. 

\begin{theorem}\label{thm:unique_Q} 
There is, up to isometry, a unique positive definite even lattice $Q$ 
of rank $16$ such that $Q^*/Q \cong 5^4$ and $Q(2) = \varnothing$. 
\end{theorem}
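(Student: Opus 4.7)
The plan is a genus-theoretic uniqueness argument, following \cite[Theorem 4.3]{GL2011b} and \cite[Corollary 7.17]{GL2011a}.

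First I would observe that the stated invariants --- rank $16$, positive definite, even, and $Q^*/Q \cong (\Z/5\Z)^4$ --- force all such $Q$ to lie in a single genus. The discriminant form $q$ on the elementary abelian $5$-group $Q^*/Q$ is nondegenerate and, up to isomorphism, classified by its Witt invariant (the Legendre symbol of its discriminant in $(\Z/5\Z)^\times / ((\Z/5\Z)^\times)^2$). Milgram's formula
\[
  \sum_{x \in Q^*/Q} \exp(2\pi \sqrt{-1}\,q(x))
  = \sqrt{|Q^*/Q|}\,\exp(\pi \sqrt{-1}\,\mathrm{sign}(Q)/4),
\]
combined with $\mathrm{sign}(Q)=16$, reduces to $\sum_x \exp(2\pi\sqrt{-1}\,q(x)) = 25$, which pins the Witt invariant down. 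Hence the genus of $Q$ is determined by the hypotheses alone.

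Next I would enumerate the isometry classes within this genus. One can either evaluate the Smith--Minkowski--Siegel mass (the local densities are trivial away from $\{2,5\}$ and require only short computations at $2$ and $5$) or, more constructively, apply Nikulin's gluing theory to present each class as an index-$5^k$ overlattice of a suitable rank-$16$ root lattice. Either route yields a short explicit list of candidate classes.

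Finally, Lemma \ref{lem:LCC_p5} exhibits at least one class in this genus with $Q(2) = \varnothing$, namely $[L_\CC]$, so existence is already in hand. I would complete the argument by checking that every other class in the list has a nonempty root sublattice, so that the hypothesis $Q(2) = \varnothing$ isolates $[L_\CC]$ uniquely. The main obstacle is this last step: a priori several candidate classes could be root-free, and distinguishing them requires an explicit root count on each. In \cite{GL2011b} this is arranged by writing any such $Q$ in the form $M + M'$ with $M,M' \cong \sqrt{2}E_8$, as in Lemma \ref{lem:LCC_EE8}, and classifying the possible configurations of two such copies of $\sqrt{2}E_8$ inside a rank-$16$ ambient lattice; the $Q(2) = \varnothing$ hypothesis is precisely what excludes the ``close'' pairs whose sum would contain norm-$2$ vectors, leaving a single admissible configuration.
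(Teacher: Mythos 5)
First, a point of reference: the paper does not prove this statement at all --- it is quoted verbatim from \cite[Theorem 4.3]{GL2011b} (see also \cite[Corollary 7.17]{GL2011a}), so there is no internal proof to compare against. Your proposal therefore has to stand on its own as a proof, and as written it does not.

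The first step is sound: for even lattices the genus is determined by the signature and the discriminant quadratic form (Nikulin), the discriminant form on $5^4$ is pinned down to one of two Witt classes, and Milgram's formula with $\mathrm{sign}(Q)=16\equiv 0 \pmod 8$ forces the Gauss sum to be $+25$, selecting the plus type. So all lattices satisfying the hypotheses do lie in one genus. But everything after that is a plan rather than an argument. You never compute the mass, never produce the ``short explicit list of candidate classes,'' and never verify that exactly one class is root-free; you yourself identify this as ``the main obstacle'' and then defer it. For a rank-$16$ genus with discriminant group $5^4$ this enumeration is a genuinely substantial computation (the genus certainly contains classes with roots, e.g.\ overlattices built from root systems, and there is no a priori reason only one class is root-free), and it is the entire content of the theorem. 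Your fallback --- that any such $Q$ can be written as $M+M'$ with $M,M'\cong\sqrt{2}E_8$ and that the $EE_8$-pair classification of \cite{GL2011a} then finishes --- smuggles in an unproved structural claim: Lemma \ref{lem:LCC_EE8} exhibits such a decomposition for the particular lattice $L_\CC$, but showing that an \emph{arbitrary} positive definite even rank-$16$ lattice with $Q^*/Q\cong 5^4$ and $Q(2)=\varnothing$ contains a spanning pair of $\sqrt{2}E_8$ sublattices is precisely the kind of statement the uniqueness theorem is needed to justify, not something you may assume. As it stands, the proposal establishes that the problem reduces to classifying one genus and then stops; the theorem remains a citation, not a proof.
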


\begin{remark}\label{rmk:Lambda_g} 
The lattice $Q$ described in the above theorem was studied in 
\cite{GL2011a, GL2011b} as a sum of two lattices isometric to $\sqrt{2}E_8$. 
It is isometric to $\mathrm{DIH}_{10}(16)$ in the notation of 
\cite[Section 7]{GL2011a}.  
The lattice $Q$ is also isometric to the coinvariant sublattice 
$\Lambda_g = \Ann_{\Lambda}(\Lambda^g)$  
of the Leech lattice $\Lambda$ associated with an isometry $g$ of class $5B$ 
in $O(\Lambda) = Co_0$, where 
$\Lambda^g = \{ \alpha \in \Lambda \mid g\alpha = \alpha \}$ 
\cite[Theorem 1.6 and Corollary 1.7]{GL2011b}, see also Section 5.3 of \cite{LS2017}.  
\end{remark}

By Lemma \ref{lem:LCC_p5}, we may take $Q = L_\CC$. 
Let
\[
  \lambda_1 = (\lambda,0,0,0), \ \lambda_2 = (0,\lambda,0,0), \ 
  \lambda_3 = (0,0,\lambda,0), \ \lambda_4 = (0,0,0,\lambda),
\]
where  $\lambda$ is defined as in \eqref{eq:lmd_p5}. 
Then $\lambda_i + L_\CC$ for $1 \le i \le 4$ generate the discriminant group 
$\CD(L_\CC) = (L_\CC)^*/L_\CC$ of $L_\CC$. 

For $\alpha \in (L_\CC)^*$, let $\overline{\alpha} = \alpha + L_\CC$. 
Since $\la \alpha + x, \beta + y \ra \in \la \alpha, \beta \ra + \Z$ for 
$\alpha, \beta \in (L_\CC)^*$ and $x, y \in L_\CC$, 
we can define a map $f : \CD(L_\CC) \times \CD(L_\CC) \to \Z_5$ by 
\[
  f(\overline{\alpha}, \overline{\beta}) = 5\la \alpha, \beta \ra + 5\Z 
  \quad \text{for } \alpha, \beta \in (L_\CC)^*, 
\]
which is a nondegenerate symmetric bilinear form of plus type on $\CD(L_\CC)$. 
In fact, $f(\overline{\lambda}_i, \overline{\lambda}_j) = 8 \delta_{i,j}$. 
Let $q : \CD(L_\CC) \to \Z_5$ be a map defined by 
\begin{equation}\label{eq:q_on_DLC}
  q(\overline{\alpha}) = \frac{5}{2} \la \alpha, \alpha \ra + 5\Z
  \quad \text{for } \alpha \in (L_\CC)^*. 
\end{equation}
Then $q$ is a non-singular quadratic form on $\CD(L_\CC)$ 
whose associated bilinear form is $f$. 
We have $q(\overline{\lambda}_i) = 4$, and $q(2\overline{\lambda}_i) = 1$ in $\Z_5$. 
The isometry group of $(\CD(L_\CC), q)$ is the general orthogonal group 
$GO(\CD(L_\CC)) = GO(\CD(L_\CC), q) \cong GO^+_4(5)$. 
Note that $GO^+_4(5)$ is isomorphic to 
$(SL_2(5) \circ SL_2(5)) . 2^2$, an extension of the central product 
$SL_2(5) \circ SL_2(5)$ by $2^2$, 
which has three subgroups of index $2$. 

The isometry group $O(Q)$ of the lattice $Q=L_\CC$ was also studied in \cite{GL2011b}. 
We recall some of the properties of $O(Q)$ as follows, 
see Theorem 1.5 and Corollary B.3 of \cite{GL2011b}. 
Let $D$ be the kernel of the action of $O(Q)$ on $\CD(Q)$. 

\begin{theorem}\label{thm:O_of_Q} 
Let $Q$ be as in Theorem \ref{thm:unique_Q}. 

\textup{(1)} 
There exists an embedding $O(Q) \rightarrow \mathrm{Frob}(20) \times GO^+_4(5)$  
such that the image has index 2 and contains neither direct factor, 
where $\mathrm{Frob}(20)$ is a Frobenius group of order $20$. 
The intersection of the image with $\mathrm{Frob(20)}$ is $D$, 
and the intersection of the image with $GO^+_4(5)$ has the shape 
$(SL_2(5) \circ SL_2(5)) : 2$. 

\textup{(2)} 
The action of $O(Q)$ on $\mathcal{D}(Q)$ induces $GO(\mathcal{D}(Q)) \cong GO^+_4(5)$,  
and the kernel $D$ of the action is a dihedral group of order $10$. 
\end{theorem}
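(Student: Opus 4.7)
The plan is to exploit the two realizations of $Q \cong L_\CC$ established earlier: as the lattice $L_\CC$ carrying the distinguished isometry $\nu$ of order $5$ (Lemma \ref{lem:LCC_p5}), and as the sum $M+M'$ of a dihedral $\sqrt{2}E_8$-pair (Lemma \ref{lem:LCC_EE8}). I would use uniqueness of $Q$ (Theorem \ref{thm:unique_Q}) to transport information between the two presentations, and freely appeal to Remark \ref{rmk:Lambda_g} identifying $Q$ with the coinvariant sublattice $\Lambda_g$ of the Leech lattice for an isometry $g$ of class $5B$ in $Co_0$.

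First I would introduce the natural homomorphism $\varphi : O(Q) \to GO(\CD(Q), q) \cong GO^+_4(5)$ induced by the action on $Q^*/Q$; well-definedness and the fact that $\varphi$ lands in the orthogonal group of $q$ follow from preservation of $\la \,\cdot\,,\,\cdot\,\ra$ by any isometry of $Q$. For part (2), the inclusion $\la \nu \ra \subset \Ker \varphi$ is immediate from the computation $\nu \lambda = \lambda + \beta_0$ with $\beta_0 \in N$, which shows that each generator $\overline{\lambda}_i$ of $\CD(L_\CC)$ is fixed. To produce an involution in $\Ker \varphi$ inverting $\nu$, I would use that $M' = \nu^2 M$ implies the unordered pair $\{M, M'\}$ is normalized by $\la \nu \ra$; the swap isometry of this dihedral $\sqrt{2}E_8$-pair gives an involution $\sigma \in O(L_\CC)$ conjugating $\nu$ to $\nu^{-1}$, and a direct check that $\sigma$ fixes each coset $\overline{\lambda}_i$ puts $\sigma \in \Ker \varphi$. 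Hence $\la \nu, \sigma \ra \cong \Dih_{10} \subset \Ker \varphi$. For the opposite inclusion I would transfer the problem to $\Lambda_g$, where the kernel in question corresponds to the image of $C_{Co_0}(g)/\la g \ra$ acting on $\Lambda_g$ modulo $5\Lambda_g^*$; the known $5B$-centralizer data in $Co_0$ then yields $|\Ker \varphi| = 10$, whence $D \cong \Dih_{10}$.

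For part (1), the second factor $\mathrm{Frob}(20)$ enters through the action of $N_{O(Q)}(\la \nu \ra)$ on $\la \nu \ra$: the quotient $N_{O(Q)}(\la \nu \ra)/C_{O(Q)}(\nu)$ embeds into $\Aut(\la \nu \ra) \cong \Z_4$, and assembling this with the inclusion of $\la \nu \ra$ gives a homomorphism onto $\la \nu \ra : \Z_4 \cong \mathrm{Frob}(20)$. Combined with $\varphi$ this yields the desired embedding $O(Q) \hookrightarrow \mathrm{Frob}(20) \times GO^+_4(5)$. To identify the image as having index $2$ and meeting neither factor, I would exhibit explicit generators of $O(Q)$, namely the normalizer of $\la \nu \ra$ from Theorem \ref{thm:aut_VNnu} adapted to $Q$, the subgroup $\mathrm{Alt}_4$ permuting the four summands $N_l$, and the RSSD involutions $t_{N_l}$ of Lemma \ref{lem:Nl_in_LC}, and then compute both projections of these generators; the intersections $D \subset \mathrm{Frob}(20)$ and $(SL_2(5) \circ SL_2(5)) : 2 \subset GO^+_4(5)$ fall out of this bookkeeping.

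The main obstacle, I expect, is the final identification of the image of $\varphi$ as precisely the index-$2$ subgroup $(SL_2(5) \circ SL_2(5)) : 2$ of $GO^+_4(5)$. This step rests on recognizing $(\CD(Q), q)$ as an orthogonal sum of two hyperbolic planes over $\Z_5$ and invoking the exceptional isomorphism $\Omega^+_4(5) \cong SL_2(5) \circ SL_2(5)$, then matching against explicit generators; in essence this is Theorem $1.5$ and Corollary $B.3$ of \cite{GL2011b}, and a self-contained argument would require either an explicit matrix realization of $O(Q)$ on $\CD(Q)$ in coordinates adapted to the hyperbolic decomposition, or a careful enumeration of sublattices of $Q$ isometric to $\sqrt{2}E_8$ together with the dihedral $\sqrt{2}E_8$-pairs they form.
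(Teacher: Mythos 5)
First, a point of orientation: the paper offers no proof of this theorem. It is quoted directly from Theorem 1.5 and Corollary B.3 of \cite{GL2011b} (the text says only ``we recall some of the properties of $O(Q)$''), so any genuine proof here amounts to reproving the Griess--Lam result, and your own closing paragraph concedes that the decisive step ``is in essence Theorem 1.5 and Corollary B.3 of \cite{GL2011b}''. Within your sketch there are two concrete gaps. The first is the construction of the embedding in part (1): the conjugation action on $\la \nu \ra$ gives only a homomorphism $O(Q) \to \Aut(\la \nu \ra) \cong \Z_4$, and one cannot ``assemble'' a map to the quotient $\Z_4$ with ``the inclusion of $\la \nu \ra$'' to produce a homomorphism into the semidirect product $\la \nu \ra : \Z_4$. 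What is actually needed is the conjugation action on the whole normal subgroup $D$: once part (2) is known, $D \cong \Dih_{10}$ gives $O(Q) \to \Aut(D) \cong \mathrm{Frob}(20)$, and the pair $(O(Q) \to \Aut(D),\ \varphi)$ is injective because its kernel is $C_{O(Q)}(D) \cap \Ker\varphi = C_{O(Q)}(D) \cap D = Z(D) = 1$. Under this (corrected) map the intersection of the image with the first factor is $\mathrm{Inn}(D) \cong D$, as the statement asserts.

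The second and more serious gap is that every quantitative assertion in the theorem is an upper bound that your argument never supplies: that $\Ker\varphi$ has order exactly $10$, that the image of $\varphi$ is all of $(SL_2(5) \circ SL_2(5)) : 2$ and not a proper subgroup, and that the image in the product has index exactly $2$. Exhibiting $\nu$, $\mathrm{Alt}_4$, the $t_{N_l}$, and normalizer data only bounds the image from below; to conclude one must either compute $|O(Q)|$ or prove that these elements generate $O(Q)$, and neither is done. Your proposed upper bound for the kernel via the Leech lattice is also misstated: the relevant mechanism is that an isometry of $\Lambda_g$ acting trivially on $\CD(\Lambda_g)$ glues with the identity on $\Lambda^g$ to an isometry of $\Lambda$, so $\Ker\varphi$ embeds into the pointwise stabilizer of $\Lambda^g$ in $Co_0$ --- not into the image of $C_{Co_0}(g)/\la g \ra$ on $\Lambda_g$, which is a different and far larger group --- and one must then evaluate that stabilizer from the $5B$ data. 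These computations are precisely the content of \cite{GL2011b}, so as written your argument does not improve on the citation the paper already makes.
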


Since $2M^* \subset M$ and $2(M')^* \subset M'$, both $M$ and $M'$ are 
RSSD sublattices of $L_\CC$. 

\begin{lemma}\label{lem:tMtMp} 
\textup{(1)} 
The dihedral group $D$ is generated by $t_M$ and $t_{M'}$, where $t_M$ and $t_{M'}$ 
are the RSSD involutions of $L_\CC$ associated with $M$ and $M'$, respectively. 
Moreover, $\nu = t_M t_{M'}$ as elements of $O(L_\CC)$.  

\textup{(2)}
$\langle \nu \rangle$ is a normal subgroup of $O(L_\CC)$. 

\textup{(3)}
The centralizer $C_{O(L_\CC)}(\nu)$ of $\nu$ in $O(L_\CC)$ is isomorphic to 
$\langle \nu \rangle \times ((SL_2(5) \circ SL_2(5)) : 2)$.
\end{lemma}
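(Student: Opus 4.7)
The plan is to prove the three assertions in order, using Theorem \ref{thm:O_of_Q} as the main structural input. First, for (1), I verify that $\nu$, $t_M$, $t_{M'}$ all lie in the kernel $D$ of the action of $O(L_\CC)$ on $\CD(L_\CC)$. Since $\nu \lambda = \lambda + \beta_0 \in \lambda + N$, the isometry $\nu$ fixes each of the generators $\lambda_i + L_\CC$ of $\CD(L_\CC)$, so $\nu \in D$. For $t_M$, the identification $M \cong \sqrt{2}E_8$ yields $2M^* \subset M$; writing any $\alpha \in L_\CC^*$ uniquely as $\alpha = a + b$ with $a \in M^*$ and $b \in \Ann_{L_\CC}(M)^*$, one gets $t_M(\alpha) - \alpha = -2a \in M \subset L_\CC$, so $t_M$ is trivial on $\CD(L_\CC)$, and the argument for $t_{M'}$ is identical. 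Since Theorem \ref{thm:O_of_Q}(2) says $D \cong D_{10}$, the cyclic subgroup $\la \nu \ra$ is the unique subgroup of order $5$, and every involution of $D$ inverts $\nu$. As $M \ne 0$, $t_M$ is a nonidentity involution, so $t_M \nu t_M = \nu^{-1}$. Combined with $M' = \nu^2(M)$, which yields $t_{M'} = \nu^2 t_M \nu^{-2}$, this gives
\[
  t_M t_{M'} = t_M \nu^2 t_M \nu^{-2} = \nu^{-2} \cdot \nu^{-2} = \nu^{-4} = \nu,
\]
whence $\la t_M, t_{M'} \ra = \la t_M, \nu \ra = D$.

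For (2), $D$ is normal in $O(L_\CC)$ by construction as the kernel of a homomorphism, and $\la \nu \ra$ is characteristic in $D \cong D_{10}$ as its unique Sylow $5$-subgroup, so $\la \nu \ra \triangleleft O(L_\CC)$. For (3), I would use the embedding of Theorem \ref{thm:O_of_Q}(1) and let $\pi_1$, $\pi_2$ denote the projections of $O(L_\CC)$ into $\mathrm{Frob}(20)$ and $GO^+_4(5)$, so that $\ker \pi_1 = (SL_2(5) \circ SL_2(5)):2$ and $\ker \pi_2 = D$. Since $\nu \in D$, its $GO^+_4(5)$-component is trivial while $\pi_1(\nu)$ has order $5$; the centralizer of an order $5$ element in $\mathrm{Frob}(20)$ equals the cyclic subgroup it generates, so $C_{O(L_\CC)}(\nu) = \pi_1^{-1}(\la \nu \ra)$. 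Elements of $\ker \pi_1$ commute with $\nu$ componentwise, and $\la \nu \ra \cap \ker \pi_1 = 1$ in the embedding, so these two subgroups generate $\pi_1^{-1}(\la \nu \ra)$ as an internal direct product. Hence $C_{O(L_\CC)}(\nu) = \la \nu \ra \times ((SL_2(5) \circ SL_2(5)):2)$.

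The most delicate step is the verification that $t_M \in D$, because it is what bridges the concrete RSSD involutions with the abstract dihedral structure supplied by Theorem \ref{thm:O_of_Q}. Once that is in place, everything else in parts (1)--(3) reduces to elementary group-theoretic bookkeeping inside $D_{10}$ and inside the embedding $O(L_\CC) \hookrightarrow \mathrm{Frob}(20) \times GO^+_4(5)$.
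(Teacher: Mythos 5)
Your proof is correct and follows essentially the same route as the paper: place $t_M$, $t_{M'}$, and $\nu$ in the kernel $D$ of the action on $\CD(L_\CC)$, exploit $D \cong \Dih_{10}$ and $M' = \nu^2(M)$ to get $t_M t_{M'} = \nu$, and read off (2) and (3) from the normality of $\la \nu \ra$ in $D$ and the embedding of Theorem \ref{thm:O_of_Q}. The only differences are cosmetic: you verify directly that $2M^* \subset M$ forces $t_M$ to act trivially on the discriminant group (the paper cites Lemma A.5 of \cite{GL2011b} for this), and you identify $\la \nu \ra$ as the unique Sylow $5$-subgroup of $D$ where the paper uses $[D,D]$.
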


\begin{proof}
Since $2M^* \subset M$, it follows from \cite[Lemma A.5]{GL2011b} that 
$t_M$ acts trivially on $\CD(L_\CC)$, see also \cite[Notation 1.3]{GL2011b}. 
Thus $t_M \in D$ by (2) of Theorem \ref{thm:O_of_Q} with $Q = L_\CC$. 
Likewise, we have $t_{M'} \in D$, 
so the dihedral group $D$ is generated by $t_M$ and $t_{M'}$. 

Since $\lambda - \nu\lambda \in N$, the isometry $\nu$ also acts trivially on 
$\CD(L_\CC)$. 
Thus $\nu \in D$, and $\langle \nu \rangle = [D,D]$, the derived subgroup of $D$, 
is a normal subgroup of $O(L_\CC)$. 
In particular, $t_M$ inverts $\nu$. 
Since $M' = \nu^2(M)$ implies $t_{M'} = \nu^2 t_M \nu^{-2}$, 
we have $t_M t_{M'} = t_M \nu^2 t_M \nu^{-2} = \nu$. 
Thus the assertions (1) and (2) hold. 
The assertion (3) follow from the assertion (1) and Theorem \ref{thm:O_of_Q}. 
\end{proof}

Recall that $N_l$ is the $l$-th direct summand of $N^4$ for $1 \le l \le 4$.
Let 
\[
  \CA = \{ g(N_1) \mid g \in O(L_\CC) \}.
\]
Since $\langle \nu \rangle$ is a normal subgroup of $O(L_\CC)$, 
any $A \in \CA$ is $\nu$-invariant. 
We have $N_l \in \CA$ for $1 \le l \le 4$ by the action of $\mathrm{Alt}_4 \subset O(L_\CC)$. 

\begin{lemma}\label{lem:sublattice_A} 
Let $A \in \CA$. 

\textup{(1)}
$A$ is a direct summand of a sublattice of $L_\CC$ 
isometric to an orthogonal sum of four members of $\CA$. 

\textup{(2)}
$A$ is RSSD in $L_\CC$.
\end{lemma}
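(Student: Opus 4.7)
The plan is to reduce both parts to the explicit realization of $L_\CC$ as $N^4$ plus glue, and to exploit the fact that $\mathrm{Alt}_4 \subset O(L_\CC)$ acts transitively on $\{N_1,N_2,N_3,N_4\}$. The first observation I would record is that every $N_l$ already belongs to $\CA$: since $\mathrm{Alt}_4$ permutes the four direct summands of $N^4$ transitively, each $N_l = h(N_1)$ for some $h \in \mathrm{Alt}_4 \subset O(L_\CC)$. In particular, if $A = g(N_1) \in \CA$, then for each $i$ the sublattice $g(N_i) = g h_i(N_1)$ is again a member of $\CA$.

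For part (1), given $A = g(N_1)$ with $g \in O(L_\CC)$, I would consider the sublattice $g(N^4) \subset L_\CC$. Because $g$ is an isometry and the $N_i$'s are mutually orthogonal, we have the orthogonal decomposition $g(N^4) = g(N_1) \oplus g(N_2) \oplus g(N_3) \oplus g(N_4)$, and each summand lies in $\CA$ by the preliminary observation. Thus $A$ is a direct summand of the sublattice $g(N^4)$ of $L_\CC$, and the latter is isometric to an orthogonal sum of four members of $\CA$, proving (1).

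For part (2), the pivotal input is the global inclusion $2L_\CC \subset N^4$. This is immediate from the description of $L_\CC$: every element has the form $x + \beta(\bu)$ with $x \in N^4$ and $\bu \in \CC$, and since $2\beta_u = \sum u_i\beta_i \in N$ we get $2\beta(\bu) \in N^4$. Applying $g \in O(L_\CC)$, which preserves $2L_\CC$, yields $2L_\CC \subset g(N^4) = A + g(N_2 \oplus N_3 \oplus N_4)$. The second term is a sublattice of $L_\CC$ orthogonal to $A$, so it is contained in $\Ann_{L_\CC}(A)$, and therefore $2L_\CC \subset A + \Ann_{L_\CC}(A)$, which is the RSSD condition. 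I do not anticipate a genuine obstacle: the only nonroutine ingredient, the inclusion $2L_\CC \subset N^4$, is built into the construction of $L_\CC$ in Section \ref{subsec:lattice_LCC}, and the $\mathrm{Alt}_4$-action on $\{N_1,\ldots,N_4\}$ was already verified.
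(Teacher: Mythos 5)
Your proof is correct and takes essentially the same route as the paper: for (1) apply $g$ to the orthogonal decomposition $N^4 = N_1 \oplus \cdots \oplus N_4$ (with $N_l \in \CA$ via the $\mathrm{Alt}_4$-action), and for (2) transport the inclusion $2L_\CC \subset N^4$ (the content of Lemma \ref{lem:Nl_in_LC}) by $g$. The only difference is that you make explicit the steps the paper's two-line proof leaves implicit.
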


\begin{proof}
Since $N^4$ is an orthogonal sum of $N_l$ for $1 \le l \le 4$, the assertion (1) holds. 
The assertion (2) follows from Lemma \ref{lem:Nl_in_LC}. 
\end{proof}

\begin{lemma}\label{lem:reflect}
Let $A \in \CA$. 
Then the RSSD involution $t_A \in O(L_\CC)$ associated with $A$ acts on 
$\mathcal{D}(L_\CC)$ as a reflection which maps $\lambda_A + L_\CC$ to its negative, 
where $\lambda_A$ is an element of $A^*$ corresponding to 
$\lambda \in N^*$ defined in \eqref{eq:lmd_p5}  
with $q(\lambda_A + L_\CC) = 4$ a square element of $\Z_5$.
\end{lemma}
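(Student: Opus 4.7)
The overall strategy is to reduce to the concrete case $A = N_1$, $\lambda_A = \lambda_1$ by choosing $g \in O(L_\CC)$ with $A = g(N_1)$ and $\lambda_A = g(\lambda_1)$, and then to analyze the action of $t_A$ on the four-dimensional $\Z_5$-vector space $\CD(L_\CC)$ through the orthogonal projection $\pr_{A^*} : L_\CC^* \to A^*$ coming from $L_\CC^* \subset A^* \oplus \Ann_{L_\CC}(A)^*$. Since isometries of $L_\CC$ preserve $L_\CC^*$ and the form $\la\,\cdot\,,\,\cdot\,\ra$, all assertions are $O(L_\CC)$-invariant under this reduction. First, the quadratic-form statement follows at once from $\la \lambda, \lambda \ra = 8/5$:
\[
  q(\lambda_A + L_\CC) = \tfrac{5}{2} \cdot \tfrac{8}{5} + 5\Z = 4 + 5\Z,
\]
which is $2^2$, a nonzero square in $\Z_5$.

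Next, I would prove that $t_A$ acts as a reflection on $\CD(L_\CC)$ by showing that its $+1$-eigenspace has $\Z_5$-dimension three. The key point is to upgrade the RSSD inclusion $2\pr_{A^*}(L_\CC) \subset A$ of Lemma \ref{lem:prA} to the sharper $\pr_{A^*}(L_\CC) \subset A$. Granting this, $\pr_{A^*}$ descends to a homomorphism $\overline\phi : \CD(L_\CC) \to A^*/A \cong \Z_5$, and it is surjective because $\lambda_A + L_\CC$ lies in its image and generates $A^*/A$. Writing $\gamma = a + b \in L_\CC^*$ with $a \in A^*$ and $b \in \Ann_{L_\CC}(A)^*$, the identity $t_A \gamma - \gamma = -2a$ shows that $t_A$ fixes $\gamma + L_\CC$ exactly when $2a \in L_\CC \cap \Q A$, and a $2$-invertibility argument identifies this in turn with $\overline\phi(\gamma + L_\CC) = 0$. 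Hence $\ker \overline\phi$ is precisely the three-dimensional $+1$-eigenspace of $t_A$, forcing $t_A$ to be a reflection; and since $t_A \lambda_A = -\lambda_A$ (as $\lambda_A \in \Q A$) with $\lambda_A + L_\CC$ a nonzero class, the latter spans the one-dimensional $-1$-eigenspace, yielding the stated reflection.

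The hard part will be the sharpened inclusion $\pr_{A^*}(L_\CC) \subset A$ and the parallel identification $L_\CC \cap \Q A = A$. Both rely on the fact that $A^*/A$ is cyclic of odd prime order $5$: if $x \in A^*$ satisfies $2x \in A$, then $x + A$ is killed by $2$ in $\Z_5$, forcing $x \in A$. This $2$-invertibility principle will be invoked three times---in establishing $\pr_{A^*}(L_\CC) \subset A$, in $L_\CC \cap \Q A = A$ (which as a byproduct confirms $\lambda_A \notin L_\CC$), and in the eigenspace identification $2a \in A \Leftrightarrow a \in A$---and is really the arithmetic heart of the argument, with the rest amounting to bookkeeping about the decomposition $L_\CC^* \subset A^* \oplus \Ann_{L_\CC}(A)^*$.
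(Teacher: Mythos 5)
Your reduction to $A=N_1$ and your computation $q(\lambda_A+L_\CC)=\tfrac52\cdot\tfrac85+5\Z=4+5\Z$ are fine, but the core of your argument rests on a false arithmetic premise. For $A\cong\sqrt2 A_4$ the discriminant group $A^*/A$ is \emph{not} cyclic of order $5$: its order is $\det(\sqrt2 A_4)=2^4\cdot 5=80$, and it has plenty of $2$-torsion. Concretely, $\tfrac12\beta_1\in N^*$ (because $\la\beta_i,\beta_j\ra\in 2\Z$), $\tfrac12\beta_1\notin N$, yet $2\cdot\tfrac12\beta_1\in N$; so your ``$2$-invertibility principle'' fails, and with it all three places you invoke it. The sharpened inclusion $\pr_{A^*}(L_\CC)\subset A$ is likewise false: the codeword $c_1$ gives the vector $\beta(c_1)=(\tfrac12\beta_1,\tfrac12\beta_1,\tfrac12\beta_1,\tfrac12\beta_1)\in L_\CC$, whose projection to $N_1^*$ is $\tfrac12\beta_1\notin N_1$. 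Hence the map $\overline\phi:\CD(L_\CC)\to A^*/A$ is not well defined as you describe it, its target is not $\Z_5$, and the identification of $\ker\overline\phi$ with the $+1$-eigenspace of $t_A$ does not go through. (Some intermediate assertions, such as $L_\CC\cap\Q A=A$, happen to be true, but for reasons having to do with the minimum weight of $\CC$, not with $2$-invertibility in $A^*/A$.)

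The statement itself is correct, and the intended argument is much more direct: by Lemma \ref{lem:sublattice_A}(1), $A$ is one summand of a full-rank orthogonal sublattice $A\oplus A_2\oplus A_3\oplus A_4\subset L_\CC$ with each $A_j\in\CA$ (the image under some $g\in O(L_\CC)$ of $N^4=N_1\oplus\cdots\oplus N_4$). Thus $t_A$ is $-1$ on $\Q A$ and $+1$ on $\Q(A_2\oplus A_3\oplus A_4)=(\Q A)^\perp$. The classes $\lambda_A+L_\CC$ and $\lambda_{A_j}+L_\CC$ are the images under $g$ of $\overline\lambda_1,\dots,\overline\lambda_4$, which generate $\CD(L_\CC)\cong 5^4$ and are mutually $f$-orthogonal with $q$-value $4\ne 0$; since $\lambda_A\in\Q A$ and $\lambda_{A_j}\in(\Q A)^\perp$, the involution $t_A$ negates the first generator and fixes the other three, i.e.\ it is the reflection in the anisotropic vector $\lambda_A+L_\CC$. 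No statement about $A^*/A$ or about projections of $L_\CC$ into $A$ is needed.
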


\begin{proof}
By Lemma \ref{lem:sublattice_A}, $A$ is a direct summand of an orthogonal sum 
of four members of $\CA$.  
Then $t_A$ acts as $-1$ on $A$ and $1$ on the other three summands 
orthogonal to $A$. 
That means it acts on $\mathcal{D}(L_\CC)$ as a reflection 
associated with $\lambda_A + L_\CC$. 
\end{proof}

\begin{lemma}\label{lem:tA}
The subgroup $\langle t_A \mid A \in \mathcal{A} \rangle$ of $C_{O(L_\CC)}(\nu)$ 
generated by the RSSD involutions $t_A$ associated with $A \in \mathcal{A}$ is 
isomorphic to an index $2$ subgroup of $GO^+_4(5)$ having the shape 
$(SL_2(5) \circ SL_2(5)) : 2$. 
\end{lemma}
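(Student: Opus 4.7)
The plan is to study the image of $H := \la t_A \mid A \in \CA \ra$ under the natural homomorphism $\pi : O(L_\CC) \to GO(\CD(L_\CC)) \cong GO^+_4(5)$ provided by Theorem \ref{thm:O_of_Q}(2), identify $\pi(H)$ as the unique index $2$ subgroup of $GO^+_4(5)$ generated by reflections with square-norm axis, and then transfer the isomorphism back to $H$ by checking that $\pi|_H$ is injective.

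First, since $\la \nu \ra$ is normal in $O(L_\CC)$ by Lemma \ref{lem:tMtMp}(2), every $A \in \CA$ is $\nu$-invariant, so $\nu$ preserves the decomposition $\Q L_\CC = \Q A \oplus \Q \Ann_{L_\CC}(A)$ and consequently $t_A$ commutes with $\nu$. Hence $H \subset C_{O(L_\CC)}(\nu) = \la \nu \ra \times K$ with $K \cong (SL_2(5) \circ SL_2(5)):2$ by Lemma \ref{lem:tMtMp}(3). The projection of $H$ onto the order $5$ factor $\la \nu \ra$ is trivial because $H$ is generated by involutions, so $H \subset K$ and $|H| \le 14400$.

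Next I would apply $\pi$. By Lemma \ref{lem:reflect}, each $\pi(t_A)$ is a reflection of $\CD(L_\CC)$ whose axis vector has square norm in $\mathbb{F}_5$. Since $\CA$ is a single $O(L_\CC)$-orbit, the set $\{\pi(t_A) : A \in \CA\}$ is closed under conjugation by $\pi(O(L_\CC)) = GO^+_4(5)$, so $\pi(H)$ is normal in $GO^+_4(5)$; by Witt's theorem all such reflections form a single conjugacy class in $GO^+_4(5)$, so $\pi(H)$ contains every square-type reflection, and these classically generate the spinor-norm kernel, an index $2$ subgroup of $GO^+_4(5)$. Moreover, noting that $D \cap C_{O(L_\CC)}(\nu) = \la \nu \ra$ (since $t_M \nu t_M = \nu^{-1}$) gives $K \cap \ker \pi = \{1\}$, so $\pi|_K$ is injective and $|\pi(K)| = 14400$; since $\pi(K)$ contains the square-type reflection $\pi(t_{N_1})$, and among the three index $2$ subgroups of $GO^+_4(5)$ only the spinor-norm kernel contains such a reflection (as square-type reflections have determinant $-1$ and trivial spinor norm, while $SO^+_4(5)$ and $\ker(\det \cdot \text{spinor norm})$ are excluded by these values), this forces $\pi(K)$ to coincide with the spinor-norm kernel.

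Combining, $\pi(H) = \pi(K)$, and since $\pi|_H$ is also injective, $|H| = |\pi(H)| = 14400 = |K|$, so $H = K \cong (SL_2(5) \circ SL_2(5)):2$. The hard part will be the middle step, verifying that the normal closure in $GO^+_4(5)$ of a single square-type reflection fills the entire spinor-norm kernel rather than some proper normal subgroup; this is handled by distinguishing reflections via the values of $\det$ and the spinor norm in $\{\pm 1\}$ and invoking Witt transitivity on square-norm anisotropic vectors of the $4$-dimensional plus-type quadratic space over $\mathbb{F}_5$.
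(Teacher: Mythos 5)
Your proposal is correct and takes essentially the same route as the paper, which likewise deduces the lemma from the normality of $\langle t_A \mid A \in \CA \rangle$ in $O(L_\CC)$ (via $g t_A g^{-1} = t_{g(A)}$), its containment in $C_{O(L_\CC)}(\nu)$, Lemma \ref{lem:reflect}, and the structure results of Theorem \ref{thm:O_of_Q} and Lemma \ref{lem:tMtMp} — you have simply made explicit the passage to $GO^+_4(5)$ and the injectivity of the restriction of $\pi$ that the paper leaves implicit. The one step you flag as outstanding — that the normal closure of a single square-norm reflection is the whole index-$2$ subgroup $(SL_2(5) \circ SL_2(5)){:}2$ rather than a smaller normal subgroup — is indeed the crux and does hold: since $\Omega^+_4(5)$ acts irreducibly on $\mathbb{F}_5^4$, any normal subgroup of $GO^+_4(5)$ not containing $\Omega^+_4(5)$ lies in the scalars $\{\pm 1\}$, so a normal subgroup containing a reflection must contain $\Omega^+_4(5)$ and hence equals the relevant index-$2$ subgroup.
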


\begin{proof}
For $g \in O(L_\CC)$, we have $g t_A g^{-1} = t_{g(A)}$. 
Thus $\langle t_A \mid A \in \mathcal{A} \rangle$ is a normal subgroup of $O(L_\CC)$. 
Since $t_A$ commutes with $\nu$, the assertion follows from Theorem \ref{thm:O_of_Q}, 
Lemmas \ref{lem:tMtMp} and \ref{lem:reflect}. 
\end{proof}

For any $A \in \mathcal{A}$, let $W_A \cong K(\mathfrak{sl}_2,5)$ 
be a vertex operator subalgebra of $V_A$ corresponding to $M^0 \subset V_N$ 
in the notation of Section \ref{sec:s_VN}. 
Then $W_A$ is a $\sigma$-type parafermion vertex operator subalgebra of $V_{L_\CC}$ 
by (1) of Theorem \ref{thm:N_RSSD_L} as $A$ is RSSD in $L_\CC$ 
by Lemma \ref{lem:sublattice_A}. Moreover, $\varphi(\sigma_{W_A}) = t_A$ 
by (2) of Theorem \ref{thm:N_RSSD_L}, where $\varphi : \Aut(V_{L_\CC}) \to O(L_\CC)$ 
is as in \eqref{eq:ex_seq_Aut_VL}, and $\sigma_{W_A}$ is the $\sigma$-involution 
of $V_{L_\CC}$ associated with $W_A$. 

Let $\hat{\nu} \in \Aut(V_{L_\CC})$ be a lift of $\nu$. 
It acts trivially on $W_A$, so $W_A \subset V_{L_\CC}^{\la \hat{\nu} \ra}$. 
Let $\rho_A$ be an element of $(1/\sqrt{2})A$ corresponding to 
$\rho \in (1/\sqrt{2})N$ defined in Section \ref{sec:cent_in_aut_VNnu} with $k = 5$.
Then $\sqrt{2}\rho_A \in 5((1-\nu)L_\CC)^*$ by \eqref{eq:rho_in_dual_N} with $p = 5$. 
Let
\[
  \psi_A = \exp(2\pi\sqrt{-1}(\sqrt{2}\rho_A)(0)/5)
\]
for $A \in \mathcal{A}$.  
Then $\psi_A \in C_{N(V_{L_\CC})}(\hat{\nu})$ by Theorem \ref{thm:C_AutVL_hatg}.
Hence $\psi_A^i(W_A) \subset V_{L_\CC}^{\la \hat{\nu} \ra}$ for $0 \le i \le 4$ 
are also $\sigma$-type parafermion vertex operator subalgebras of $V_{L_\CC}$. 
We consider the $\sigma$-involutions of $V_{L_\CC}^{\la \hat{\nu} \ra}$ 
associated with $\psi_A^i(W_A)$'s. 
Let
\[
  \mathcal{W} = \{ \psi_A^{i}(W_A) \mid A \in \mathcal{A}, i = 0, 1 \}. 
\]

\begin{proposition}\label{prop:7-22}
Let $H = \langle \sigma_W \mid W \in \mathcal{W} \rangle$ be the subgroup 
of $\Aut(V_{L_\CC}^{\la \hat{\nu} \ra})$ generated by the $\sigma$-involutions 
$\sigma_W$ associated with $W \in \mathcal{W}$. Then 
\[
  H \cong 5^4 :  ((SL_2(5) \circ SL_2(5)) : 2) 
  \cong C_{\Aut(V_{L_\CC})}(\hat{\nu})/\langle \hat{\nu} \rangle.
\] 
\end{proposition}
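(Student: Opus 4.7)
The plan is to identify $H$ with a concrete subgroup of $C_{\Aut(V_{L_\CC})}(\hn)$ and then pass to $\Aut(V_{L_\CC}^{\la\hn\ra})$ via the restriction map, whose kernel is $\la\hn\ra$. First I would check that every generator $\sigma_W$, $W \in \mathcal{W}$, lies in $C_{\Aut(V_{L_\CC})}(\hn)$: since each $A \in \mathcal{A}$ is $\nu$-invariant, $\hn$ acts trivially on $W_A$, so $W_A \subset V_{L_\CC}^{\la\hn\ra}$, and $\psi_A \in C_{N(V_{L_\CC})}(\hn)$ then places $W = \psi_A^i(W_A)$ inside $V_{L_\CC}^{\la\hn\ra}$ as well. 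Combining Corollary \ref{cor:C_AutVL_hatg} with Lemma \ref{lem:tMtMp}(3) already gives $C_{\Aut(V_{L_\CC})}(\hn)/\la\hn\ra \cong 5^4 : ((SL_2(5) \circ SL_2(5)) : 2)$, which is exactly the target structure; the remaining work is to realize this whole group inside $H$.

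The second step is to compute the image $\varphi(H) \subset C_{O(L_\CC)}(\nu)$. By Theorem \ref{thm:N_RSSD_L}(2), $\varphi(\sigma_{W_A}) = t_A$, and since $\psi_A \in \Ker \varphi$, also $\varphi(\psi_A^i \sigma_{W_A}) = t_A$. Hence $\varphi(H) = \la t_A \mid A \in \mathcal{A}\ra$, which by Lemma \ref{lem:tA} is the full $(SL_2(5) \circ SL_2(5)) : 2$ factor of $C_{O(L_\CC)}(\nu)$ and in particular does not contain $\nu$. For the kernel part I would invoke Proposition \ref{prop:other_sigma}, which gives $\sigma_{\psi_A(W_A)} = \psi_A^2 \sigma_{W_A}$, so that $\sigma_{\psi_A(W_A)} \sigma_{W_A} = \psi_A^2 \in H$; since $\psi_A$ has order $5$, this puts $\psi_A \in H$ for every $A \in \mathcal{A}$. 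Taking $A = N_l$, $l = 1,\ldots,4$, and using that these $\psi_{N_l}$ generate $C_{N(V_{L_\CC})}(\hn) \cong 5^4$ (as in Section \ref{subsec:lattice_LCC}), I conclude $H \supset C_{N(V_{L_\CC})}(\hn)$.

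Combining the two steps gives $H = C_{N(V_{L_\CC})}(\hn) : \la t_A \mid A \in \mathcal{A}\ra \cong 5^4 : ((SL_2(5) \circ SL_2(5)) : 2)$ as a subgroup of $C_{\Aut(V_{L_\CC})}(\hn)$. Since $\varphi(\hn) = \nu \notin \varphi(H)$, we have $\hn \notin H$, so the restriction map to $V_{L_\CC}^{\la\hn\ra}$ is injective on $H$; this yields the first claimed isomorphism, and comparing with the shape of $C_{\Aut(V_{L_\CC})}(\hn)/\la\hn\ra$ computed in the first paragraph yields the second. The main obstacle is really Lemma \ref{lem:tA}, which is what guarantees that the RSSD involutions $t_A$ exhaust a full $(SL_2(5) \circ SL_2(5)) : 2$ inside $O(L_\CC)$; once that and the dihedral identity $\sigma_{W_A} \psi_A \sigma_{W_A} = \psi_A^{-1}$ are in hand, the rest is short bookkeeping of the semidirect product structure.
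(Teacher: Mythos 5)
Your argument is correct and follows essentially the same route as the paper's (very terse) proof: the paper simply cites Corollary \ref{cor:C_AutVL_hatg}, Theorem \ref{thm:sec-7-2}, Lemma \ref{lem:tMtMp}(3), and Lemma \ref{lem:tA}, and your two steps — extracting the $5^4$ normal subgroup from $\sigma_{\psi_A(W_A)}\sigma_{W_A}=\psi_A^2$ (which is exactly the mechanism behind Theorem \ref{thm:sec-7-2}) and computing $\varphi(H)=\la t_A \mid A\in\mathcal{A}\ra$ via Lemma \ref{lem:tA} — are precisely the content of those citations. The only cosmetic point is that $H$ is by definition a subgroup of $\Aut(V_{L_\CC}^{\la\hn\ra})$, so one should phrase the last step as: the subgroup of $C_{\Aut(V_{L_\CC})}(\hn)$ generated by the $\sigma_W$ meets $\la\hn\ra$ trivially (since $\nu\notin\varphi(H)$ by the direct-product decomposition in Lemma \ref{lem:tMtMp}(3)), hence maps isomorphically onto $H$ under restriction.
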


\begin{proof}
Since $\varphi(\sigma_{W_A}) = t_A$ for $A \in \mathcal{A}$, the assertion follows from 
Corollary \ref{cor:C_AutVL_hatg}, Theorem \ref{thm:sec-7-2}, 
(3) of Lemma \ref{lem:tMtMp}, and Lemma \ref{lem:tA}.
\end{proof}

\subsubsection{Ising vectors and $U_{5A}$}

We show that there exist some $\sigma$-type parafermion vertex operator subalgebras 
of $V_{L_\CC}^{\la \hat{\nu} \ra}$ which are not contained in $\mathcal{W}$. 
We first review a few facts about Ising vectors and vertex operator algebras 
generated by two Ising vectors.

A weight $2$ vector $e \in V_2$ of a vertex operator algebra $V$ 
is called an Ising vector if the vertex operator subalgebra $\mathrm{VOA}(e)$  
generated by $e$ is isomorphic to the simple Virasoro vertex operator algebra 
$L(1/2,0)$ of central charge $1/2$. 
Given an Ising vector $e$, one can decompose $V$ as 	
\[
  V = V_e[0] \oplus V_e[1/2] \oplus V_e[1/16], 
\]
where $V_e[h]$ is the sum of all irreducible $\mathrm{VOA}(e)$-submodules of $V$ 
isomorphic to $L(1/2,h)$ for $h=0,1/2,1/16$.
Then the linear map $\tau_e$ on $V$ defined by
\begin{equation*}
  \tau_e=
  \begin{cases}
    1 & \text{ on }\ V_e[0]\oplus V_e[1/2],\\
    -1& \text{ on }\  V_e[1/16] 
  \end{cases}
\end{equation*}
is an automorphism of the vertex operator algebra $V$. 
Note that $\tau_e$ agrees with $\tau_W$ defined in Theorem \ref{thm:tau-autom} 
in the case $k=2$. 

Vertex operator algebras $\mathrm{VOA}(e, f)$ generated by two Ising vectors 
$e$ and $f$ have been studied. In \cite{LYY2005, LYY2007}, 
nine such vertex operator algebras were constructed as subalgebras of $V_{\sqrt{2}E_8}$, 
which are denoted as $U_{nX}$ for 
$nX = 1A$, $2A$, $3A$, $4A$, $5A$, $6A$, $4B$, $2B$, and $3C$.
Uniqueness of $\mathrm{VOA}(e, f)$ with $| \tau_e \tau_f | = 5$ was established in 
\cite{Zheng2020} as a subalgebra of a vertex operator algebra over $\R$ of CFT-type 
with trivial weight $1$ subspace. 
We apply Theorem 3.12 of \cite{Zheng2020} 
to a vertex operator algebra over $\C$ in the following form. 
Note that $| \tau_e \tau_f | = 5$ if and only if $(e, f) = 3/2^9$ by \cite{Sakuma2007}.

\begin{theorem}\label{thm:VOAef}
Let $V$ be a vertex operator algebra of CFT-type with trivial weight $1$ subspace 
$V_1 = 0$. Suppose $V$ possesses a positive definite invariant hermitian form 
$(\,\cdot\,,\,\cdot\,)$ with $(\1,\1) = 1$. 
Let $e, f \in V_2$ be Ising vectors such that the product $\tau_e \tau_f$ 
of the involutions $\tau_e$ and $\tau_f$ associated with $e$ and $f$ has order $5$. 
Then the vertex operator subalgebra
$\mathrm{VOA}(e, f)$ generated by $e$ and $f$ is isomorphic to $U_{5A}$.
\end{theorem}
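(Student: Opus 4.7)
The plan is to reduce Theorem \ref{thm:VOAef} to its real form counterpart \cite[Theorem 3.12]{Zheng2020} by extracting a real form of $U := \mathrm{VOA}(e,f)$ containing $e$ and $f$. First, I would use the positive definite invariant hermitian form $(\,\cdot\,,\,\cdot\,)$ together with a chosen invariant symmetric bilinear form $\la\,\cdot\,,\,\cdot\,\ra$ on $V$ to build an antilinear involution $\phi \colon V \to V$ via the relation $(u,v) = \la \phi(u), v \ra$. Standard arguments show that $\phi$ is an antilinear vertex operator algebra automorphism of order $2$ with $\phi(\omega) = \omega$, and that $V^\phi$ is a real form of $V$ satisfying $V = \C \otimes_\R V^\phi$.

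Next I would verify that $\phi(e) = e$ and $\phi(f) = f$, so that both Ising vectors lie in the real form. Since $\phi$ is an antilinear VOA automorphism, $\phi(e)$ is again an Ising vector; positivity of the hermitian form forces $(e, e) = \la \phi(e), e \ra > 0$, while inside the subalgebra generated by $e$ and $\phi(e)$ one can use Sakuma's classification of dihedral pairs together with the positivity of the invariant form to rule out $\phi(e) \ne e$. The same argument applies to $f$, so $e, f \in U_\R := U \cap V^\phi$, and $U_\R$ is a real form of $U$ with $\C \otimes_\R U_\R = U$.

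Finally I would check the remaining hypotheses of \cite[Theorem 3.12]{Zheng2020} for $U_\R$: it is of CFT-type with $(U_\R)_1 = 0$ (inherited from $V_1 = 0$), and the restriction of $\la\,\cdot\,,\,\cdot\,\ra$ is an invariant symmetric $\R$-bilinear form on $U_\R$, positive definite because it agrees with the positive definite hermitian form on $U_\R \times U_\R$ via $\phi|_{U_\R} = \mathrm{id}$. Since $|\tau_e\tau_f| = 5$, Zheng's theorem yields $U_\R \cong U_{5A,\R}$, the real form of the $U_{5A}$ of \cite{LYY2005, LYY2007}, and complexifying gives $U \cong \C \otimes_\R U_\R \cong U_{5A}$.

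The main obstacle is the step $\phi(e) = e$. The existence of the antilinear involution $\phi$ does not automatically pin down its action on individual Ising vectors, and ruling out $\phi(e) = e'$ for some other Ising vector $e'$ with the same pairing against $e$ requires positivity of the hermitian form together with a rigidity argument using Sakuma's classification inside the weight $2$ part of $U$. The remaining ingredients — constructing $\phi$, confirming it is a VOA automorphism, identifying $U_\R$ as a real form, and invoking Zheng's theorem — are formal once this key point is secured.
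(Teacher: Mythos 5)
The paper itself offers essentially no proof of this theorem: it is presented as a direct transcription of \cite[Theorem 3.12]{Zheng2020} (which is stated over $\R$ with a positive definite invariant bilinear form) to the complex/hermitian setting, with the only supporting remark being the citation of \cite{Sakuma2007} for $|\tau_e\tau_f|=5 \Leftrightarrow \la e,f\ra = 3/2^9$. So your strategy --- extract a compact real form $V^\phi$, show $e,f \in V^\phi$, apply Zheng's theorem there, and complexify --- is the natural way to justify the passage the paper leaves implicit, and in that sense it matches the paper's intent.

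However, the step you yourself identify as the main obstacle, $\phi(e)=e$, is a genuine gap, and the proposed resolution is circular as written. Sakuma's classification of dihedral pairs (and the resulting bound $\la e, e'\ra \le 1/32$ for distinct Ising vectors) is a theorem about Ising vectors lying in a real vertex operator algebra with positive definite invariant \emph{bilinear} form; applying it to the pair $(e,\phi(e))$ presupposes exactly the reality you are trying to establish. What positivity of the hermitian form actually gives, via $0 \le (e-\phi(e),\, e-\phi(e))$ and $\la e,e\ra = \la\phi(e),\phi(e)\ra = 1/4$, is the lower bound $\la e,\phi(e)\ra \ge 1/4$; the missing content is the complementary upper bound $\la e,\phi(e)\ra < 1/4$ when $e \ne \phi(e)$, which requires a hermitian-form version of Norton's inequality or a direct Griess-algebra computation carried out before any real structure on $\mathrm{VOA}(e,\phi(e))$ is available. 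A second, smaller issue: defining $\phi$ by $(u,v)=\la\phi(u),v\ra$ requires a \emph{nondegenerate} invariant bilinear form on $V$, which the stated hypotheses (CFT-type, $V_1=0$) guarantee to exist but not to be nondegenerate absent simplicity or self-duality; in most formulations a ``positive definite invariant hermitian form'' is defined relative to a chosen antilinear involution in the first place, in which case this construction is redundant and the hypothesis should be read as already supplying $\phi$. Until the bound forcing $\phi(e)=e$ is proved in the hermitian setting, the reduction to \cite[Theorem 3.12]{Zheng2020} is not complete.
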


The properties of $U_{5A}$ were studied in \cite{LYY2005, LYY2007}. 
In particular, we have  
$U_{5A}^{\la \tau_e \tau_f \ra} \cong K(\mathfrak{sl}_2,5) \otimes K(\mathfrak{sl}_2,5)$ 
\cite[(3.62)]{LYY2005}.

Recall from Lemma \ref{lem:LCC_EE8} that $L_\CC = M+M'$ is a sum of two  
sublattices $M$ and $M'$ isometric to $\sqrt{2}E_8$ with $M \cap M' = 0$. 
Since both $M$ and $M'$ are doubly even lattices, 
we can pick a $2$-cocycle $\varepsilon$ on $L_\CC$ such that $\varepsilon$ is trivial on $M$ and $M'$. 
For instance, we may define 
a $\Z$-bilinear map $\varepsilon : L_\CC \times L_\CC \to \Z_2$ by 
\[
  \varepsilon(\alpha, \beta) = \langle \alpha_2, \beta_1\rangle + 2\Z 
\]
for $\alpha = \alpha_1+\alpha_2$, $\beta = \beta_1+\beta_2$ 
with $\alpha_1$, $\beta_1 \in M$ and $\alpha_2$, $\beta_2\in M'$. 
Then $\varepsilon(M,M) = \varepsilon(M',M') = 0$, and 
\[
  \varepsilon(\alpha,\alpha) = \frac{1}{2}\la \alpha,\alpha \ra + 2\Z 
\quad \text{for} \ \alpha \in L_\CC.
\]
Thus $\varepsilon(\alpha, \beta)  + \varepsilon(\beta, \alpha) = \la \alpha, \beta \ra + 2\Z$. 

As in \cite[(3.12)]{LYY2007}, we may define two Ising vectors $e_M$ and $e_{M'}$ 
in $V_{L_\CC}$ as follows.  
\begin{equation*}
\begin{split}
  e_M &= \frac{1}{16} \omega_M +\frac{1}{32} \sum_{\alpha\in M(4)} e^\alpha,\\
  e_{M'} &= \frac{1}{16} \omega_{M'} +\frac{1}{32} \sum_{\alpha\in M'(4)} e^\alpha,
\end{split}
\end{equation*}
where $\omega_M$ and $\omega_{M'}$ are the conformal vectors of $V_M$ and $V_{M'}$, 
respectively. Then $\varphi(\tau_{e_M})= t_M$ and $\varphi(\tau_{e_{M'}})= t_{M'}$
by \cite[Lemma 4.1]{LYY2007}, where $\varphi : \mathrm{Aut}(V_{L_\CC}) \to O(L_\CC)$ 
is a group homomorphism defined as in \eqref{eq:ex_seq_Aut_VL}.
Therefore, $\varphi(\tau_{e_M}\tau_{e_{M'}}) = t_Mt_{M'}$. That is, 
\begin{equation}\label{eq:k5_lift_nu}
  \varphi(\tau_{e_M}\tau_{e_{M'}}) = \nu
\end{equation} 
by Lemma \ref{lem:tMtMp}. 
This implies that $| \tau_{e_M}\tau_{e_{M'}} | = 5$.

Since $V_{L_\CC}$ is a lattice vertex operator algebra, 
it possesses a positive definite invariant hermitian form. 
We consider the fixed point subalgebra $V_{L_\CC}^{\la \theta \ra}$ 
of $V_{L_\CC}$ by the lift $\theta$ of the $-1$-isometry of the lattice $L_\CC$. 
The weight $1$ subspace of $V_{L_\CC}^{\la \theta \ra}$ is trivial as 
$L_\CC(2) = \varnothing$. Note that both $\omega_M$ and $\omega_{M'}$ belong to 
$V_{L_\CC}^{\la \theta \ra}$. 
Thus we can apply Theorem \ref{thm:VOAef} to $V_{L_\CC}^{\la \theta \ra}$, 
$e_M$, and $e_{M'}$ to obtain the following lemma.

\begin{lemma}
The vertex operator subalgebra $\mathrm{VOA}(e_M, e_{M'})$ of $V_{L_\CC}^{\la \theta \ra}$ 
generated by $e_M$ and $e_{M'}$ is isomorphic to $U_{5A}$. Moreover, 
\[
  \mathrm{VOA}(e_M, e_{M'})^{\la \tau_{e_M}\tau_{e_{M'}} \ra}
  \cong K(\mathfrak{sl}_2,5) \otimes K(\mathfrak{sl}_2,5).
\]
\end{lemma}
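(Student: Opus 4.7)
The plan is to verify the hypotheses of Theorem \ref{thm:VOAef} applied to $V = V_{L_\CC}^{\la \theta \ra}$ with the pair $(e_M, e_{M'})$, and then invoke that theorem together with the known structure of $U_{5A}$ from \cite{LYY2005}.

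First I would confirm the ambient hypotheses on $V = V_{L_\CC}^{\la \theta \ra}$. It is of CFT-type because $V_{L_\CC}$ is, and the positive definite invariant hermitian form on $V_{L_\CC}$ (normalized by $(\1,\1) = 1$) restricts to such a form on $V$. For the triviality of $V_1$, note that by Lemma \ref{lem:LCC_p5} we have $L_\CC(2) = \varnothing$, so as in Lemma \ref{lem:wt1_VN}(1) the weight-one subspace $(V_{L_\CC})_1$ equals $M(1)_1 = \{h(-1)\1 \mid h \in \h\}$; the lift $\theta$ acts as $-1$ on this space, so $V_1 = 0$.

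Second, I would check that $e_M, e_{M'} \in V$. The conformal vectors $\omega_M$ and $\omega_{M'}$ lie in $V_{L_\CC}^{\la \theta \ra}$. Since $\theta(e^\alpha) = e^{-\alpha}$ and both $M(4)$ and $M'(4)$ are stable under $\alpha \mapsto -\alpha$, the sums $\sum_{\alpha \in M(4)} e^\alpha$ and $\sum_{\alpha \in M'(4)} e^\alpha$ are $\theta$-fixed as well.

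Third, and this is the crux, I would show $|\tau_{e_M}\tau_{e_{M'}}| = 5$. From \eqref{eq:k5_lift_nu} we have $\varphi(\tau_{e_M}\tau_{e_{M'}}) = \nu$, which has order $5$, so the order of $\tau_{e_M}\tau_{e_{M'}}$ is a positive multiple of $5$. On the other hand, Sakuma's theorem restricts this order to $\{1,2,3,4,5,6\}$ (which applies here thanks to $V_1 = 0$ from step one), forcing the order to be exactly $5$. The main obstacle is this step: one must ensure that Sakuma's bound is applicable, which is precisely why the verification $V_1 = 0$ must be carried out over $V_{L_\CC}^{\la \theta \ra}$ rather than over $V_{L_\CC}$ itself (whose weight-one space is nonzero).

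With these three points, Theorem \ref{thm:VOAef} yields $\mathrm{VOA}(e_M, e_{M'}) \cong U_{5A}$. The second assertion then follows from the identification $U_{5A}^{\la \tau_e \tau_f \ra} \cong K(\mathfrak{sl}_2,5) \otimes K(\mathfrak{sl}_2,5)$ established in \cite[(3.62)]{LYY2005}, transported along the isomorphism $\mathrm{VOA}(e_M, e_{M'}) \cong U_{5A}$ that sends the Ising generators to the Ising generators.
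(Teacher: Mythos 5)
Your proposal follows the paper's own argument essentially verbatim: check that $V_{L_\CC}^{\la\theta\ra}$ is of CFT-type with $V_1=0$ (via $L_\CC(2)=\varnothing$) and carries a positive definite invariant hermitian form, observe $e_M,e_{M'}$ are $\theta$-fixed, deduce $\lvert\tau_{e_M}\tau_{e_{M'}}\rvert=5$ from $\varphi(\tau_{e_M}\tau_{e_{M'}})=\nu$, apply Theorem \ref{thm:VOAef}, and quote \cite[(3.62)]{LYY2005} for the fixed-point subalgebra. Your added details (the $\theta$-invariance of the sums over $M(4)$ and $M'(4)$, and the explicit use of Sakuma's bound to pin the order at exactly $5$) only make explicit what the paper leaves implicit.
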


\begin{remark}
Since $L_\CC = Q \cong \Lambda_g \subset \Lambda$ by Remark \ref{rmk:Lambda_g}, 
we have $\mathrm{VOA}(e_M, e_{M'}) \subset V_{L_\CC}^{\la \theta \ra} 
\subset V_{\Lambda}^{\la \theta \ra}$. 
Thus $\mathrm{VOA}(e_M, e_{M'}) \subset V^\natural$, the Moonshine vertex operator algebra.
\end{remark}

Since $\tau_{e_M}\tau_{e_{M'}} \in \Aut(V_{L_\CC})$ is a lift of $\nu$ by  
\eqref{eq:k5_lift_nu}, we may take $\hat{\nu}$ to be $\hat{\nu} = \tau_{e_M}\tau_{e_{M'}}$. 
Using the notation in Section \ref{subsec:paraf_VOA}, (B.23) of \cite{LYY2005} 
can be written as 
\begin{equation}\label{eq:U5A_dec}
  U_{5A} \cong \bigoplus_{j=0}^4 M^j \otimes M^{2j},
\end{equation}
that is, $U_{5A}$ is a $\Z_5$-graded simple current extension of 
$M^0 \otimes M^0 = K(\mathfrak{sl}_2, 5) \otimes K(\mathfrak{sl}_2, 5)$. 

Denote by $W_1$ and $W_2$ the first and the second tensor factor of 
$\mathrm{VOA}(e_M, e_{M'})^{\la \hat{\nu} \ra}
\cong K(\mathfrak{sl}_2,5) \otimes K(\mathfrak{sl}_2,5)$. 
For $i = 1, 2$, one can associate an automorphism $\tau_{W_i}$ of $V_{L_\CC}$ with 
$W_i$ by Theorem \ref{thm:tau-autom}. Then $\tau_{W_1}$ (resp. $\tau_{W_2}$) 
acts on $M^j \otimes M^{2j}$ as $\zeta_5^{3j}$ (resp. $\zeta_5^j$), where 
$\zeta_5 = \exp(2\pi\sqrt{-1}/5)$. 
Hence $\la \hat{\nu} \ra = \la \tau_{W_1} \ra = \la \tau_{W_2} \ra$ as subgroups of 
$\Aut(\mathrm{VOA}(e_M, e_{M'}))$. 
Then the actions of $\la \hat{\nu} \ra$,  $\la \tau_{W_1} \ra$, and $\la \tau_{W_2} \ra$ 
on the $\mathrm{VOA}(e_M, e_{M'})$-module $V_{L_\CC}$ agree with each other. 
In particular, $V_{L_\CC}^{\la \hat{\nu} \ra}$ is of $\sigma$-type as a $W_i$-module 
for $i = 1,2$. 
That is, $W_i$ is a $\sigma$-type parafermion vertex operator subalgebra of 
$V_{L_\CC}^{\la \hat{\nu} \ra}$; however, it is not of $\sigma$-type in $V_{L_\CC}$. 
In fact, $W_i$ is not of $\sigma$-type even in $\mathrm{VOA}(e_M, e_{M'})$. 
Therefore, the corresponding $\sigma$-involutions $\sigma_{W_1}$ and $\sigma_{W_2}$  
are only defined on $V_{L_\CC}^{\la \hat{\nu} \ra}$, 
and they cannot be extended to automorphisms of $V_{L_\CC}$. 

It is known \cite[Theorem5.11]{Lam2020} that the automorphism group 
$\Aut(V_{L_\CC}^{\la \hat{\nu} \ra})$ of $V_{L_\CC}^{\la \hat{\nu} \ra}$ 
is isomorphic to an index $2$ subgroup of the general orthogonal group $GO^+_6(5)$ 
different from $SO^+_6(5)$. We have the following theorem.

\begin{theorem}
The automorphism group $\Aut(V_{L_\CC}^{\la \hat{\nu} \ra})$ of 
the vertex operator algebra $V_{L_\CC}^{\la \hat{\nu} \ra}$ is generated by 
the $\sigma$-involutions $\sigma_W$ for $W \in \mathcal{W} \cup \{ W_1, W_2 \}$. 
\end{theorem}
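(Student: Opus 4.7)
The plan is to prove $\widetilde{H} := \la \sigma_W \mid W \in \mathcal{W} \cup \{W_1, W_2\} \ra = \Aut(V_{L_\CC}^{\la \hn \ra})$; the inclusion $\widetilde{H} \subseteq \Aut(V_{L_\CC}^{\la \hn \ra})$ is automatic, so I focus on the reverse inclusion. The argument combines Proposition \ref{prop:7-22} with the structural description of $\Aut(V_{L_\CC}^{\la \hn \ra})$ from \cite[Theorem 5.11]{Lam2020} as a specific index-$2$ subgroup of $GO_6^+(5)$ different from $SO_6^+(5)$.

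By Proposition \ref{prop:7-22}, $H := \la \sigma_W \mid W \in \mathcal{W} \ra$ is isomorphic to $C_{\Aut(V_{L_\CC})}(\hn)/\la \hn \ra$, of shape $5^4 : ((SL_2(5) \circ SL_2(5)) : 2)$. Under the restriction map from $\Aut(V_{L_\CC})$ to $\Aut(V_{L_\CC}^{\la \hn \ra})$, the subgroup $H$ is precisely the image of the centralizer of $\hn$; in particular, every element of $H$ extends to an automorphism of $V_{L_\CC}$. The remark immediately preceding the theorem records that $\sigma_{W_1}$ and $\sigma_{W_2}$ do not extend to $V_{L_\CC}$ (because $W_i$ fails to be of $\sigma$-type in $V_{L_\CC}$, or even in $\mathrm{VOA}(e_M,e_{M'})$), so they lie outside $H$ and $\widetilde{H}$ strictly contains $H$.

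To finish, I would compute the order of $\widetilde{H}$ and compare it with the order of $\Aut(V_{L_\CC}^{\la \hn \ra})$ predicted by \cite[Theorem 5.11]{Lam2020}. The key input is the action of $\sigma_{W_1}$ and $\sigma_{W_2}$ on the simple current $V_{L_\CC}^{\la \hn \ra}$-modules, in particular on the $\Z_5$-subgroup $\{V_{L_\CC}(j)\}_{j=0}^{4}$ whose fusion is controlled by the decomposition \eqref{eq:U5A_dec} of $U_{5A}$ and by the $\Z_5$-symmetries $\tau_{W_1},\tau_{W_2}$. Combining this with the known $H$-action on the discriminant-like data $\mathcal{D}(L_\CC) \cong 5^4$ produces elements realizing the complement of $H$ inside the predicted index-$2$ subgroup of $GO_6^+(5)$. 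The main obstacle is precisely this last step: identifying the cosets $H\sigma_{W_1}$, $H\sigma_{W_2}$ and their products inside the quotient $\Aut(V_{L_\CC}^{\la \hn \ra})/H$, and verifying that they generate the full quotient so that $|\widetilde{H}|$ matches the target order. Once this coset bookkeeping is in place, $\widetilde{H}$ is forced by cardinality to exhaust $\Aut(V_{L_\CC}^{\la \hn \ra})$, completing the proof.
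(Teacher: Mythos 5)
Your first two paragraphs match the paper's proof: the paper also starts from the identification of $\Aut(V_{L_\CC}^{\la \hn \ra})$ with an index-$2$ subgroup of $GO^+_6(5)$ different from $SO^+_6(5)$ via \cite[Theorem 5.11]{Lam2020}, identifies $H = \la \sigma_W \mid W \in \mathcal{W} \ra$ with the image of $C_{\Aut(V_{L_\CC})}(\hn)$ (the subgroup called $C$ in that proof), and uses the non-extendability of $\sigma_{W_1}, \sigma_{W_2}$ to conclude that the group $\widetilde{H}$ generated by all the $\sigma$-involutions strictly contains $H$ and is not contained in the subgroup $N$ (the image of the normalizer of $\la \hn \ra$). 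Up to this point your argument is sound.

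The gap is in your final step. The paper does \emph{not} count orders or track cosets of $H$; it invokes the group-theoretic structure established in the proof of \cite[Theorem 5.11]{Lam2020}, namely that any subgroup of the ambient index-$2$ subgroup of $GO^+_6(5)$ which properly contains $C = H$ and is not contained in $N$ must already contain the derived subgroup $\Omega^+_6(5)$. Since $\widetilde{H}$ moreover contains a reflection (each $\sigma_W$ acts as one), it must be the index-$2$ subgroup different from $SO^+_6(5)$, i.e.\ all of $\Aut(V_{L_\CC}^{\la \hn \ra})$. Your proposed substitute --- computing $\lvert \widetilde{H} \rvert$ by ``coset bookkeeping'' in ``the quotient $\Aut(V_{L_\CC}^{\la \hn \ra})/H$'' --- is not viable as stated: $H$ is not normal in $\Aut(V_{L_\CC}^{\la \hn \ra})$ (it has shape $5^4:((SL_2(5)\circ SL_2(5)):2)$ inside a group of order roughly $\lvert GO^+_6(5)\rvert/2$, of which it is far from being a normal subgroup), so there is no quotient group to work in, and determining the subgroup generated by $H$ together with two further involutions by a direct order count is exactly the hard part you acknowledge you cannot complete. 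Without the maximality/overgroup argument from \cite{Lam2020} (or an equivalent appeal to the subgroup structure of $GO^+_6(5)$), the proof does not close.
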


\begin{proof}
We slightly modify the proof of \cite[Theorem 5.11]{Lam2020} with $Q = L_\CC$ and 
$\hat{g} = \hat{\nu}$. 
Let $G$ be the subgroup of $\Aut(V_{L_\CC}^{\la \hat{\nu} \ra})$ generated by 
$\sigma_W$ for $W \in \mathcal{W} \cup \{ W_1, W_2 \}$. 
Let $N$ and $C$ be as in the proof of \cite[Theorem 5.11]{Lam2020}. 
Actually, the group $C$ agrees with $H = \la \sigma_W \mid W \in \mathcal{W} \ra$ 
defined in Proposition \ref{prop:7-22}. 
Recall that $\sigma_W$ for $W \in \mathcal{W}$ are automorphisms of $V_{L_\CC}$, 
while $\sigma_{W_1}$ and $\sigma_{W_2}$ are only defined on $V_{L_\CC}^{\la \hat{\nu} \ra}$, 
and they cannot be extended to $V_{L_\CC}$. 
Thus $G$ is strictly larger than $H$, and not contained in $N$. 
Then $G$ contains the derived subgroup $\Omega^+_6(5)$ of $GO^+_6(5)$. 
Since $G$ contains a reflection, 
it is an index $2$ subgroup of $GO^+_6(5)$ different from $SO^+_6(5)$. 
Thus $G = \Aut(V_{L_\CC}^{\la \hat{\nu} \ra})$ as desired.
\end{proof}

\appendix

\section{Representations of $U_{5A}$}

As in Section \ref{subsec:nonstandard}, 
let $U_{5A}$ be the vertex operator algebra $U$ constructed in \cite{LYY2005, LYY2007} 
for the $5A$ case.
The irreducible modules for $U_{5A}$ were classified \cite[Theorem 3.19]{LYY2005}, 
and the fusion rules were determined \cite[Theorem 5.3]{DZ2020}. 
In those papers, $U_{5A}$ is studied as an extension of a tensor product 
$L(1/2, 0) \otimes L(25/28, 0) \otimes L(25/28, 0)$ of three Virasoro vertex operator
algebras. In this appendix, we consider $U_{5A}$ as an extension of 
$K(\mathfrak{sl}_2, 5) \otimes K(\mathfrak{sl}_2, 5)$, 
and review the irreducible modules and the fusion rules for $U_{5A}$. 
Actually, $U_{5A}$ is a $\Z_5$-code vertex operator algebra 
\cite[Section 10]{AYY2019}. 
Thus the irreducible $U_{5A}$-modules are known in a more general context  
\cite[Section 8]{AYY2019}. 
We discuss the fusion product of irreducible $U_{5A}$-modules 
by using the induction functor as well.

Recall from \eqref{eq:U5A_dec} that
\[
  U_{5A} \cong \bigoplus_{j=0}^4 M^j \otimes M^{2j}
\] 
is a $\Z_5$-graded simple current extension of 
$M^0 \otimes M^0 = K(\mathfrak{sl}_2, 5) \otimes K(\mathfrak{sl}_2, 5)$.
The vertex operator algebra structure on $U_{5A}$ 
which extends the $M^0 \otimes M^0$-module 
structure is unique \cite[Proposition 5.3]{DM2004}. 
Since $M^0 = K(\mathfrak{sl}_2, 5)$ is simple, self-dual, rational, $C_2$-cofinite, 
and of CFT-type, 
$U_{5A}$ is also simple, self-dual, rational, $C_2$-cofinite, and of CFT-type by
\cite[Theorem 2.14]{Yamauchi2004}. 
Note that the contragredient module of $M^j \otimes M^{2j}$ is  
$M^{-j} \otimes M^{-2j}$.

For a vertex operator algebra $V$, 
we denote by $\Irr(V)$ (resp. $\SC{V}$) the set of equivalence classes of 
irreducible $V$-modules (resp. simple current $V$-modules). 
We consider a map $b_V : \SC{V} \times \Irr(V) \to \Q/\Z$ defined by
\[
  b_V(A,X) = h(A \boxtimes_V X) - h(A) - h(X) + \Z
\]
for $A \in \SC{V}$ and $X \in \Irr(V)$, where $h(X)$ is the conformal weight of $X$. 
It is known \cite[(8.1)]{AYY2019} that
\[
  b_{M^0}(M^p,M^{i,j}) = \frac{p(i-2j)}{5} + \Z
\]
for $0 \le i \le 5$, $0 \le j \le 4$, and $0 \le p \le 4$. Since
\begin{equation}\label{eq:act_MpMq}
  (M^p \otimes M^q) \boxtimes_{M^0 \otimes M^0} (M^{i_1, j_1} \otimes M^{i_2, j_2}) 
  = (M^p \boxtimes_{M^0} M^{i_1, j_1}) \otimes (M^q \boxtimes_{M^0} M^{i_2, j_2}), 
\end{equation}
it follows that 
\begin{equation*}
  \begin{split}
  b_{M^0 \otimes M^0} (M^p \otimes M^q, M^{i_1, j_1} \otimes M^{i_2, j_2}) 
  &= b_{M^0}(M^p, M^{i_1, j_1}) + b_{M^0}(M^q, M^{i_2, j_2})\\
  &= \frac{1}{5}(p(i_1-2j_1) + q(i_2-2j_2)) + \Z
  \end{split}
\end{equation*}
for $0 \le i_1, i_2 \le 5$, $0 \le j_1, j_2 \le 4$, and $0 \le p, q \le 4$. 

There are $225$ inequivalent irreducible $M^0 \otimes M^0$-modules 
$M^{i_1, j_1} \otimes M^{i_2, j_2}$ for $0 \le j_1 < i_1 \le 5$ and $0 \le j_2 < i_2 \le 5$. 
Among them, we can verify that there are exactly $45$ $M^{i_1, j_1} \otimes M^{i_2, j_2}$'s 
for which 
\[
b_{M^0 \otimes M^0} (M^p \otimes M^{2p}, M^{i_1, j_1} \otimes M^{i_2, j_2}) = 0
\]
for $0 \le p \le 4$. 
We denote by $\Irr^0(M^0 \otimes M^0)$ 
the set of those $45$ irreducible $M^0 \otimes M^0$-modules. 

Let $U^0 = U_{5A} \cong \bigoplus_{j=0}^4 M^j \otimes M^{2j}$. We consider
\[
  U^0 \boxtimes_{M^0 \otimes M^0} X 
  = \bigoplus_{j=0}^4 (M^j \otimes M^{2j}) \boxtimes_{M^0 \otimes M^0} X
\]
for $X \in \Irr^0(M^0 \otimes M^0)$. 
For simplicity of notation, we write $[i_1, j_1; i_2, j_2]$ for $M^{i_1, j_1} \otimes M^{i_2, j_2}$. 
We also write $U^0 \boxtimes X$ for $U^0 \boxtimes_{M^0 \otimes M^0} X$. 
Let
\begin{equation*}
\begin{split}
& U^1 = U^0 \boxtimes [5, 0; 4, 2], \qquad U^2 = U^0 \boxtimes [5, 0; 2, 1],\\
& U^3 = U^0 \boxtimes [2, 0; 5, 3], \qquad U^4 = U^0 \boxtimes [2, 0; 4, 0],\\
& U^5 = U^0 \boxtimes [2, 0; 3, 2], \qquad U^6 = U^0 \boxtimes [1, 0; 5, 4],\\
& U^7 = U^0 \boxtimes [1, 0; 4, 1], \qquad U^8 = U^0 \boxtimes [1, 0; 2, 0].
\end{split}
\end{equation*}

Then by \eqref{eq:isom_Mij}, \eqref{eq:paraf_sc}, and \eqref{eq:act_MpMq}, we have 
\begin{equation}\label{eq:U0-U8_dec}
\begin{split}
&U^0 = [5, 0; 5, 0] + [5, 1; 5, 2] + [5, 2; 5, 4] + [5, 3; 5, 1] + [5, 4; 5, 3],\\
&U^1 = [5, 0; 4, 2] + [5, 1; 1, 0] + [5, 2; 4, 1] + [5, 3; 4, 3] + [5, 4; 4, 0],\\
&U^2 = [5, 0; 2, 1] + [5, 1; 3, 1] + [5, 2; 2, 0] + [5, 3; 3, 0] + [5, 4; 3, 2],\\
&U^3 = [2, 0; 5, 3] + [2, 1; 5, 0] + [3, 0; 5, 2] + [3, 1; 5, 4] + [3, 2; 5, 1],\\
&U^4 = [2, 0; 4, 0] + [2, 1; 4, 2] + [3, 0; 1, 0] + [3, 1; 4, 1] + [3, 2; 4, 3],\\
&U^5 = [2, 0; 3, 2] + [2, 1; 2, 1] + [3, 0; 3, 1] + [3, 1; 2, 0] + [3, 2; 3, 0],\\
&U^6 = [1, 0; 5, 4] + [4, 0; 5, 1] + [4, 1; 5, 3] + [4, 2; 5, 0] + [4, 3; 5, 2],\\
&U^7 = [1, 0; 4, 1] + [4, 0; 4, 3] + [4, 1; 4, 0] + [4, 2; 4, 2] + [4, 3; 1, 0],\\
&U^8 = [1, 0; 2, 0] + [4, 0; 3, 0] + [4, 1; 3, 2] + [4, 2; 2, 1] + [4, 3; 3, 1]
\end{split}
\end{equation}
as $M^0 \otimes M^0$-modules. 
The $45$ irreducible $M^0 \otimes M^0$-modules which appear on the right-hand side of 
\eqref{eq:U0-U8_dec} are exactly the $45$ members of $\Irr^0(M^0 \otimes M^0)$. 

We have the following theorem by Theorems 2.14 and 3.2 of \cite{Yamauchi2004}, 
see also Theorem 2.2 and Proposition 2.3 of \cite{AYY2019}.

\begin{theorem}
There are exactly nine inequivalent irreducible $U_{5A}$-modules, which are 
$U^i$, $0 \le i \le 8$.
\end{theorem}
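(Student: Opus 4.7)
The plan is to invoke the general theory of $\Z_n$-graded simple current extensions, as developed by Yamauchi \cite{Yamauchi2004}, applied to the extension $U_{5A} = \bigoplus_{j=0}^{4} M^j \otimes M^{2j}$ of the base algebra $V^0 = M^0 \otimes M^0$. The cited Theorem 2.14 of \cite{Yamauchi2004} (see also Theorem 2.2 of \cite{AYY2019}) asserts that for a rational $\Z_n$-graded simple current extension $V = \bigoplus_i V^i$, there is a bijection between the set $\Irr(V)$ and the set of $\Z_n$-orbits on the subset of $\Irr(V^0)$ consisting of those irreducible $V^0$-modules $X$ for which the monodromy with every $V^i$ vanishes, i.e.\ $b_{V^0}(V^i,X) = 0$ for all $i$; moreover, each such orbit $[X]$ corresponds to the induced module $V \boxtimes_{V^0} X$, which is irreducible.

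The first step is to determine the $V^0$-stable irreducible modules. Since the simple currents in our extension are $M^p \otimes M^{2p}$ with $0 \le p \le 4$, and since
\[
  b_{M^0 \otimes M^0}(M^p \otimes M^{2p}, M^{i_1,j_1} \otimes M^{i_2,j_2})
  = \frac{p(i_1 - 2j_1) + 2p(i_2 - 2j_2)}{5} + \Z,
\]
the compatibility condition reduces to the single linear congruence
\[
  (i_1 - 2j_1) + 2(i_2 - 2j_2) \equiv 0 \pmod{5}.
\]
Running through the $225$ representatives $M^{i_1,j_1} \otimes M^{i_2,j_2}$ with $0 \le j_s < i_s \le 5$, this picks out exactly $45$ modules, which is the set $\Irr^0(M^0 \otimes M^0)$ introduced in the text.

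The second step is to count orbits. The cyclic group $\la M^1 \otimes M^2 \ra \cong \Z_5$ of simple currents acts on $\Irr^0(M^0 \otimes M^0)$ via the fusion product, and this action preserves $\Irr^0$ since the monodromy condition is defined by a group homomorphism. The action must be free: by \eqref{eq:paraf_sc} a nonidentity simple current $M^p \otimes M^{2p}$ sends $[i_1, j_1; i_2, j_2]$ to $[i_1, j_1 + p; i_2, j_2 + 2p]$, which is never isomorphic to $[i_1, j_1; i_2, j_2]$ when $p \not\equiv 0 \pmod 5$ because $M^{i_1, j_1+p} \cong M^{i_1, j_1}$ as $M^0$-modules would force $p \equiv 0 \pmod 5$ (use \eqref{eq:isom_Mij} and the list of representatives $0 \le j < i \le 5$). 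Hence we obtain $45/5 = 9$ orbits.

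Finally, one verifies that the nine modules $U^0, U^1, \ldots, U^8$ defined as $U^0 \boxtimes_{M^0 \otimes M^0} X$ for suitably chosen $X \in \Irr^0(M^0 \otimes M^0)$ are representatives of the nine orbits: the decompositions \eqref{eq:U0-U8_dec}, obtained from \eqref{eq:isom_Mij}, \eqref{eq:paraf_sc}, and \eqref{eq:act_MpMq}, show that each $U^i$ is the sum of the five distinct $V^0$-modules in one $\Z_5$-orbit, and that the nine orbits together cover all $45$ modules of $\Irr^0(M^0 \otimes M^0)$. Applying the Yamauchi bijection, each $U^i$ is an irreducible $U_{5A}$-module, and these exhaust $\Irr(U_{5A})$. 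The main obstacle is essentially bookkeeping: confirming the count of $45$ and verifying that the orbit structure matches the prescribed $U^i$'s in \eqref{eq:U0-U8_dec}; once the simple current framework is in place, the rest follows mechanically.
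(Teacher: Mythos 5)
Your proposal is correct and follows essentially the same route as the paper: both rest on the classification of irreducible modules of a $\Z_5$-graded simple current extension via induction from the $45$ compatible irreducible $M^0 \otimes M^0$-modules (the paper cites Theorems 2.14 and 3.2 of Yamauchi and Theorem 2.2, Proposition 2.3 of \cite{AYY2019} for exactly this). You merely make explicit two points the paper leaves to the references, namely the freeness of the $\Z_5$-action on $\Irr^0(M^0\otimes M^0)$ and the resulting orbit count $45/5=9$, and both of these checks are carried out correctly.
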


The top level of $M^{i,j}$ is one dimensional, and its weight $h(M^{i,j})$ is given by \eqref{conf_wt_Mij}. 
Since
\[
  h(M^{i_1, j_1} \otimes M^{i_2, j_2}) = h(M^{i_1, j_1}) + h(M^{i_2, j_2}),
\]
we can calculate the conformal weight of each irreducible direct summand on the 
right-hand side of \eqref{eq:U0-U8_dec}. 
Then we see that the weight and the dimension of the top level of 
$U^i$, $0 \le i \le 8$, are as follows.
\begin{center}
\begin{tabular}{c|ccccccccc}
\ & $U^0$ & $U^1$ & $U^2$ & $U^3$ & $U^4$ & $U^5$ & $U^6$ & $U^7$ & $U^8$\\
\hline
\textup{weight} & $0$ & $6/7$ & $2/7$ & $2/7$ & $1/7$ & $4/7$ & $6/7$ & $5/7$ & $1/7$\\
\textup{dimension} & $1$ & $3$ & $1$ & $1$ & $2$ & $5$ & $3$ & $4$ & $2$
\end{tabular}
\end{center}

Next, we discuss the fusion product $U^i \boxtimes_{U^0} U^j$. 
Let $\CC(M^0 \otimes M^0)$ be the category of $M^0 \otimes M^0$-modules. 
We consider two full subcategories of $\CC(M^0 \otimes M^0)$, namely, 
$\mathrm{Rep}^0 U^0$ and $\CC^0(M^0 \otimes M^0)$, 
where $\mathrm{Rep}^0 U^0$ is the braided tensor category of $U^0$-modules, and 
$\CC^0(M^0 \otimes M^0)$ is the $\C$-linear additive braided monoidal category 
with simple objects being the members of $\Irr^0(M^0 \otimes M^0)$. 
By Theorem 2.67 of \cite{CKM2017}, a functor defined by 
\[
  F : \CC^0(M^0 \otimes M^0) \to \mathrm{Rep}^0 U^0; 
  \quad X \mapsto U^0 \boxtimes_{M^0 \otimes M^0} X
\]
is a braided tensor functor. 
Hence 
\begin{equation}\label{eq:act_F}
  (U^0 \boxtimes_{M^0 \otimes M^0} X) \boxtimes_{U^0}  (U^0 \boxtimes_{M^0 \otimes M^0} Y) 
  = U^0 \boxtimes_{M^0 \otimes M^0} (X \boxtimes_{M^0 \otimes M^0} Y)
\end{equation}
for $X, Y \in \Irr^0(M^0 \otimes M^0)$. 
Since 
\[
(M^{i_1, j_1} \otimes M^{i_2, j_2}) \boxtimes_{M^0 \otimes M^0} (M^{i'_1, j'_1} \otimes M^{i'_2, j'_2})
= (M^{i_1, j_1} \boxtimes_{M^0} M^{i'_1, j'_1}) \otimes (M^{i_2, j_2} \boxtimes_{M^0} M^{i'_2, j'_2}),  
\]
we can calculate the fusion product  
$(U^0 \boxtimes_{M^0 \otimes M^0} X) \boxtimes_{U^0} (U^0 \boxtimes_{M^0 \otimes M^0} Y)$ 
by \eqref{eq:paraf_fusion} and \eqref{eq:act_F}. 
In fact, we obtain the following theorem.

\begin{theorem}
The fusion product of irreducible $U_{5A}$-modules is as follows, where 
$i \boxtimes j = k_1 + \cdots + k_r$ implies $U^i \boxtimes_{U^0} U^j = U^{k_1} + \cdots + U^{k_r}$.

\quad $0 \boxtimes i = i$ for $0 \le i \le 8$,

\quad $1 \boxtimes 1 = 0+2$, \quad $1 \boxtimes 2 = 1+2$, \quad $1 \boxtimes 3 = 4$, 
\quad $1 \boxtimes 4 = 3+5$, \quad $1 \boxtimes 5 = 4+5$, 

\quad $1 \boxtimes 6 = 7$, \quad $1 \boxtimes 7 = 6+8$, \quad $1 \boxtimes 8 = 7+8$,

\quad $2 \boxtimes 2 = 0+1+2$, \quad $2 \boxtimes 3 = 5$, 
\quad $2 \boxtimes 4 = 4+5$, \quad $2 \boxtimes 5 = 3+4+5$,
 
\quad $2 \boxtimes 6 = 8$, \quad $2 \boxtimes 7 = 7+8$, \quad $2 \boxtimes 8 = 6+7+8$, 

\quad $3 \boxtimes 3 = 0+3+6$, \quad $3 \boxtimes 4 = 1+4+7$, 
\quad $3 \boxtimes 5 = 2+5+8$, \quad $3 \boxtimes 6 = 3+6$,

\quad $3 \boxtimes 7 = 4+7$, \quad $3 \boxtimes 8 = 5+8$, 

\quad $4 \boxtimes 4 = 0+2+3+5+6+8$, \quad $4 \boxtimes 5 = 1+2+4+5+7+8$,
\quad $4 \boxtimes 6 = 4+7$, 

\quad $4 \boxtimes 7 = 3+5+6+8$, 
\quad $4 \boxtimes 8 = 4+5+7+8$, 

\quad $5 \boxtimes 5 = 0+1+2+3+4+5+6+7+8$, \quad $5 \boxtimes 6 = 5+8$, 
\quad $5 \boxtimes 7 = 4+5+7+8$, 

\quad $5 \boxtimes 8 = 3+4+5+6+7+8$, 

\quad $6 \boxtimes 6 = 0+3$, \quad $6 \boxtimes 7 = 1+4$, \quad $6 \boxtimes 8 = 2+5$, 

\quad $7 \boxtimes 7 = 0+2+3+5$, \quad $7 \boxtimes 8 = 1+2+4+5$, 

\quad $8 \boxtimes 8 = 0+1+2+3+4+5$.
\end{theorem}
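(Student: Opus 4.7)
The plan is to reduce the entire computation to fusion in $\CC(M^0\otimes M^0)$ by exploiting the induction functor $F$ from \cite{CKM2017} that is recalled immediately above the theorem, together with the decompositions \eqref{eq:U0-U8_dec} and the parafermion fusion rules \eqref{eq:paraf_fusion}. Concretely, for each $0\le i\le 8$ I choose the representative $X_i\in\Irr^0(M^0\otimes M^0)$ used to define $U^i$; so $X_0=[5,0;5,0]$, $X_1=[5,0;4,2]$, $X_2=[5,0;2,1]$, $X_3=[2,0;5,3]$, $X_4=[2,0;4,0]$, $X_5=[2,0;3,2]$, $X_6=[1,0;5,4]$, $X_7=[1,0;4,1]$, $X_8=[1,0;2,0]$, and $U^i=F(X_i)=U^0\boxtimes_{M^0\otimes M^0}X_i$. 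Then \eqref{eq:act_F} gives
\begin{equation*}
U^i\boxtimes_{U^0}U^j \;=\; U^0\boxtimes_{M^0\otimes M^0}\bigl(X_i\boxtimes_{M^0\otimes M^0}X_j\bigr),
\end{equation*}
so the task splits into (a) computing $X_i\boxtimes X_j$ in $\CC(M^0\otimes M^0)$, and (b) identifying $U^0\boxtimes_{M^0\otimes M^0}(-)$ on each of its irreducible summands with one of $U^0,\dots,U^8$.

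For step (a), I would write $X_i=M^{a,b}\otimes M^{c,d}$ and use
\[
(M^{a,b}\otimes M^{c,d})\boxtimes_{M^0\otimes M^0}(M^{a',b'}\otimes M^{c',d'})
= (M^{a,b}\boxtimes_{M^0}M^{a',b'})\otimes(M^{c,d}\boxtimes_{M^0}M^{c',d'}),
\]
together with \eqref{eq:paraf_fusion} for $k=5$. The factor sets $R(a,a')\subset\{|a-a'|,\dots,\min(a+a',10-a-a')\}$ with the right parity are small, so each product expands into at most a handful of summands. For step (b), since $U^0$ acts by simple currents \eqref{eq:paraf_sc} via $M^p\otimes M^{2p}$, the orbit of any $M^{a,b}\otimes M^{c,d}\in\Irr^0(M^0\otimes M^0)$ under $U^0\boxtimes(-)$ is exactly one of the nine sets listed in \eqref{eq:U0-U8_dec}, and I can read off which $U^k$ it represents simply from the pair $(a,c)$ and one congruence class, since \eqref{eq:U0-U8_dec} shows each $(a,c)$-type appears in a unique $U^k$ (up to the symmetry $M^{a,b}\cong M^{5-a,b-a}$ of \eqref{eq:isom_Mij}). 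So once $X_i\boxtimes X_j$ is decomposed into irreducibles, each summand determines a unique $U^k$, and multiplicities add up.

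I would carry this out in a uniform way: fix $i$, and for each $j\ge i$ (using the fact, from \cite[Theorem 3.7]{Huang2005}, that the fusion product is commutative) compute $X_i\boxtimes X_j$, translate each summand via the bijection from $\Irr^0(M^0\otimes M^0)$-classes modulo the $U^0$-action to $\{0,\dots,8\}$, and tally. For example, $U^3\boxtimes U^3$ corresponds to $[2,0;5,3]\boxtimes[2,0;5,3]=(M^{2,0}\boxtimes M^{2,0})\otimes(M^{5,3}\boxtimes M^{5,3})=(M^{0,0}+M^{2,0}+M^{4,0})\otimes(M^{0,1})$, whose three summands lie in the $U^0$-orbits of $X_0$, $X_3$, $X_6$ respectively, giving $3\boxtimes 3=0+3+6$; analogous expansions yield each remaining line of the table.

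The routine but bookkeeping-heavy part will be enumerating all $\binom{9}{2}+9=45$ products and identifying each irreducible $M^{a,b}\otimes M^{c,d}$ with its $U^k$-orbit; the only real subtlety is that the equivalence \eqref{eq:isom_Mij} must be applied consistently when the summand of $X_i\boxtimes X_j$ is written in a non-canonical form, and one must verify that the multiplicities on each side of \eqref{eq:U0-U8_dec} match (which holds because $M^j\otimes M^{2j}$ acts freely on $\Irr^0(M^0\otimes M^0)$, a consequence of the fact that these are distinct simple currents). A good consistency check at the end is that $\sum_k \dim\Hom(U^i\boxtimes U^j,U^k)\cdot 5 = |X_i\boxtimes X_j|$ (total number of $M^0\otimes M^0$-summands counted with multiplicity), which follows from $|U^0|_{M^0\otimes M^0}=5$ and which can be verified at a glance for each row.
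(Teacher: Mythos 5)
Your proposal is exactly the paper's argument: use the braided tensor (induction) functor $F$ so that $U^i\boxtimes_{U^0}U^j=U^0\boxtimes_{M^0\otimes M^0}(X_i\boxtimes_{M^0\otimes M^0}X_j)$ via \eqref{eq:act_F}, compute the right-hand side factorwise with the parafermion fusion rules \eqref{eq:paraf_fusion}, and identify each summand with its $U^0$-orbit in \eqref{eq:U0-U8_dec}. One caution: in your illustrative example the second indices are off --- $M^{2,0}\boxtimes M^{2,0}=M^{0,3}+M^{2,4}+M^{4,0}$ (not $M^{0,0}+M^{2,0}+M^{4,0}$, the first of which is not even in $\Irr^0(M^0\otimes M^0)$) --- though the correct summands do land in the orbits of $X_0$, $X_3$, $X_6$ as you claim, so the conclusion $3\boxtimes 3=0+3+6$ stands.
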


\end{document}